\numberwithin{equation}{section}
\newtheorem{theorem}{Theorem}[section]
\newtheorem{lemma}[theorem]{Lemma}
\newtheorem{proposition}[theorem]{Proposition}
\newtheorem{corollary}[theorem]{Corollary}
\theoremstyle{definition}
\newtheorem{remark}[theorem]{Remark}
\newenvironment{assumption}[1]
  {\innercustomthm}
  {\endinnercustomthm}
\def\E{{\mathbb E}}
\def\R{{\mathbb R}}
\def\PP{{\mathbb P}}
\def\P{{\mathcal P}}
\def\RC{{\mathcal R}}
\def\V{{\mathcal V}}
\def\Q{{\mathcal Q}}
\def\L{{\mathcal L}}
\def\G{{\mathcal G}}
\def\F{{\mathcal F}}
\def\FF{{\mathbb F}}
\def\C{{\mathcal C}}
\title{Limit theory for controlled McKean-Vlasov dynamics}
\author{Daniel Lacker}
\address{Division of Applied Mathematics, Brown University}
\email{daniel\_lacker@brown.edu}
\thanks{This material is based upon work supported by the National Science Foundation under Award No. DMS 1502980.}
\begin{document}

\begin{abstract}
This paper rigorously connects the problem of optimal control of McKean-Vlasov dynamics with large systems of  interacting controlled state processes. Precisely, the empirical distributions of near-optimal control-state pairs for the $n$-state systems, as $n$ tends to infinity, admit limit points in distribution (if the objective functions are suitably coercive), and every such limit is supported on the set of optimal control-state pairs for the McKean-Vlasov problem. Conversely, any distribution on the set of optimal control-state pairs for the McKean-Vlasov problem can be realized as a limit in this manner. Arguments are based on controlled martingale problems, which lend themselves naturally to existence proofs; along the way it is shown that a large class of McKean-Vlasov control problems admit optimal Markovian controls.
\end{abstract}

\maketitle

\section{Introduction}

The past decade has seen a surge of interest in the optimal control of McKean-Vlasov dynamics, also known as mean field control. This problem can be described loosely as follows: The controller chooses a process $\alpha$, which in turn determines the state process $X$ via a McKean-Vlasov stochastic differential equation (SDE)
\[
dX_t = b(t,X_t,\PP\circ X_t^{-1},\alpha_t)dt + \sigma(t,X_t,\PP\circ X_t^{-1},\alpha_t)dW_t,
\]
where $W$ is a Brownian motion and $\PP \circ X_t^{-1}$ denotes the law of $X_t$. The controller seeks to maximize a functional of the form
\begin{align}
\E\left[\int_0^Tf(t,X_t,\PP\circ X_t^{-1},\alpha_t)dt + g(X_T,\PP\circ X_T^{-1})\right], \label{intro:SDE}
\end{align}
where $T > 0$ is a fixed time horizon.
The unusual feature of this control problem is that the functions $(b,\sigma,f,g)$ depend on the law $\PP\circ X_t^{-1}$ of the state process.

The study of McKean-Vlasov control problems is often justified by a heuristic connection to control problems involving large but finite numbers of interacting state processes. More precisely, imagine there are $n$ state processes interacting through their empirical measures via the following SDE system:
\begin{align}
dX^i_t &= b(t,X^i_t,\widehat{\mu}^n_t,\alpha^i_t)dt + \sigma(t,X^i_t,\widehat{\mu}^n_t,\alpha^i_t)dW^i_t, \label{intro:SDE-n} \\
\widehat{\mu}^n_t &= \frac{1}{n}\sum_{k=1}^n\delta_{X^k_t}. \nonumber
\end{align}
Here $W^1,\ldots,W^n$ are independent Brownian motions, and $\alpha^1,\ldots,\alpha^n$ are controls chosen by a central planner. The objective of this central planner is to maximize the averaged objective
\[
\frac{1}{n}\sum_{i=1}^n\E\left[\int_0^Tf(t,X^i_t,\widehat{\mu}^n_t,\alpha^i_t)dt + g(X^i_T,\widehat{\mu}^n_T)\right].
\]
Of course, if $(X_0^i,W^i,\alpha^i)_{i=1}^n$ are suitably exchangeable (and the SDEs sufficiently well-posed), then each term in the average is equal, and the problem reduces to maximizing the objective corresponding to a single state.

When there is no control present (i.e., no $\alpha$ in $b$ or $\sigma$), it is by now well established that the empirical measure flow $(\widehat{\mu}^n_t)_{t \in [0,T]}$ of \eqref{intro:SDE-n} converges in a sense to the measure flow $(\PP \circ X_t^{-1})_{t \in [0,T]}$ arising from \eqref{intro:SDE}; see, for instance \cite{oelschlagermkv,gartnermkv,sznitman}. This is true, at least, under reasonable continuity assumptions on $(b,\sigma)$, the most important of which is that the interactions are \emph{weak} or \emph{nonlocal} in the sense that the dependence on the measure argument is continuous with respect to weak convergence or a Wasserstein metric.

For the controlled model, however, is not obvious that this limit should commute with the optimization. The primary goal of this paper is to address this issue by providing general conditions under which a sequence of optimizers of the $n$-state system must converge (in the sense of empirical measure) to solutions of the McKean-Vlasov control problem. More precisely, Theorem \ref{th:main-limit} shows, under modest assumptions on $(b,\sigma,f,g)$, that the empirical measure flows $(\widehat{\mu}^n_t)_{t \in [0,T]}$ of optimally controlled $n$-state systems are tight, and every limit in distribution is supported on the set of measures flows $(\PP \circ X_t^{-1})_{t \in [0,T]}$, where $X$ is an optimally controlled state in the McKean-Vlasov control problem. As an immediate corollary, whenever the McKean-Vlasov control problem admits a unique optimal control, we obtain a proper convergence result or \emph{propagation of chaos} \cite{sznitman}.

Our arguments are largely based on martingale problems, combining ideas from the McKean-Vlasov limit theory with a well-established compactification method for stochastic control. The state equations, both for the $n$-state problem and the McKean-Vlasov control problem, are formulated as controlled martingale problems and with \emph{relaxed} (i.e., measure-valued) controls. For standard stochastic control problems, this formulation provides a certain compactness which has facilitated very general results on the existence of optimal controls. This idea originated with Fleming \cite{fleming-generalized} and matured with the works of El Karoui et al. \cite{elkaroui-compactification} and Haussmann-Lepeltier \cite{haussmannlepeltier-existence}, later seeing extensions to general state spaces \cite{kurtzstockbridge-1998}. Our Theorem \ref{th:existence} provides an analogous result on the existence of optimal relaxed controls for the McKean-Vlasov problem. Moreover, as in \cite{elkaroui-compactification,haussmannlepeltier-existence}, we show under an additional convexity hypothesis that there exists an optimal \emph{Markovian} control. Remarkably, the mean field term does not complicate the arguments leading to Markovian controls, which are based on the mimicking theorem of Gy\"ongy \cite{gyongy-mimicking}, or rather the generalization due to Brunick and Shreve \cite{brunickshreve-mimicking}.

The proof of the main limit theorem follows the well trodden path of formulating the limiting equation as a martingale problem in the sense of Stroock and Varadhan \cite{stroockvaradhanbook}, with an additional nonlinearity stemming from the mean field term term. This particular approach to the study of McKean-Vlasov limits seems to have originated with Oelschl\"ager \cite{oelschlagermkv}, while the impressive paper of G\"artner \cite{gartnermkv} contains the most broadly applicable results for models with continuous coefficients. Similar martingale arguments have been applied to a number of related models, including stronger interactions \cite{oelschlager-moderate,meleardroelly-moderate,jourdainmeleard-moderate}, rank-based models \cite{shkolnikov-largesystems,jourdainreygner-rank}, and Boltzmann-type models \cite{meleard1996asymptotic,graham1997stochastic}. The monograph of Sznitman \cite{sznitman} provides a general overview and a bird's eye view of some variants.

Our limit theorem appears to be the first its kind for controlled diffusions, and only three recent papers seem to touch on this: First, Fischer and Livieri \cite{fischer-meanfield} prove a limit theorem for a very special case of our model arising from mean-variance portfolio optimization. Second, Fornasier and Solombrino \cite{fornasier2014mean} treat a general class of related deterministic (i.e., $\sigma \equiv 0$) models; our results allow for degenerate volatility but do not  subsume theirs. Last but not least, Budhiraja et al. \cite{budhirajadupuisfischer} study weak limits of empirical measures of controlled interacting diffusions with relaxed controls, en route to proving a large deviation principle for the McKean-Vlasov limit. Section 5 of their paper contains similar analysis to our Section \ref{se:limits}, but they encounter only particular types of coefficients, with linear-quadratic dependence on the control variable.

The literature on McKean-Vlasov optimal control problems is focused primarily on solution techniques. Only one paper \cite{bahlali2014existence} seems to adopt remotely similar techniques to ours, using relaxed controls (but not martingale problems) and much more restrictive assumptions on the form of the coefficients. The most popular techniques are based on extending Pontryagin's maximum principle \cite{anderssondjehiche-maximum,buckdahndjehicheli-general,carmonadelarue-mkv,bensoussan-mfgbook} or deriving a dynamic programming principle, and with it a form of a Hamilton-Jacobi-Bellman equation on a space of probability measures \cite{laurierepironneau-dynamic,pham-MKVcontrol,bayraktar-cosso-pham} (related to the so-called master equation studied in \cite{carmonadelarue-master,bensoussan-masterequation}). Our solvability result does not provide any insight on how to construct an optimizer, and its strength lies rather in its generality, requiring not even Lipschitz assumptions.

While our assumptions on the model parameters are quite modest, several interesting extensions are left untouched. Most notably, we do not address models with \emph{common noise}, in which an additional independent Brownian motion $B$ appears in the dynamics, and the law $\PP \circ X_t^{-1}$ in the coefficients is replaced by the conditional law $\PP(X_t \in \cdot | B_s, \ s \le t)$. See the recent work of Pham and Wei \cite{pham-stochasticMKVcontrol} for analysis of this model. In another direction, the same authors in \cite{pham-MKVcontrol} study an extension of the basic model in which the coefficients depend on the law of the control, not just the state.

The optimal control of McKean-Vlasov dynamics is closely related to \emph{mean field game theory}, which was developed by Lasry and Lions \cite{lasrylionsmfg} and Huang, Malham\'e, and Caines \cite{huangmfg1}. Mean field games are essentially concerned with the continuum limit of a \emph{competitive} form of the $n$-state control problem, in which the controls $\alpha^1,\ldots,\alpha^n$ are chosen by different agents in Nash equilibrium.  In several applications, in fact, controlled McKean-Vlasov dynamics are studied so that the competitive (decentralized) outcome can be compared with the Pareto optimal (centralized) one \cite{huang-centralized,huang2012social}. The paper \cite{carmonadelaruelachapelle-mkvvsmfg} and the forthcoming book \cite{carmonadelarue-book} study and compare these two distinct regimes, highlighting the significant methodological overlap. It is worth mentioning in particular that martingale methods and relaxed controls have been applied in the study of mean field games, both for existence theory \cite{lacker-mfgcontrolledmartingaleproblems,carmonadelaruelacker-mfgcommonnoise} and limit theory \cite{lacker-mfglimit,fischer-mfgconnection}, and the present work borrows several technical points from these papers. 

The paper is organized as follows.
Section \ref{se:modelsetup} carefully formulates both the McKean-Vlasov and $n$-state control problems, stating all of the main assumptions and results. The remaining sections are devoted to the proofs. Section \ref{se:estimates} derives some preliminary estimates on the state processes, which are put to use Section \ref{se:existenceproofs} to prove the main existence theorems. The proofs of the main limit theorems comprise Sections \ref{se:limits} and \ref{se:limittheorems-proof}. Finally, Section \ref{se:strongvsweak} contains the proof (of Theorem \ref{th:strongequalsweak-MF}) that the optimal value of the control problem is the same for the usual strong formulation and for our preferred relaxed formulation, under suitable assumptions.

\section{Model setup and main results} \label{se:modelsetup}
For a metric space $E$, let $\P(E)$ denote the set of Borel probability measures on $E$, and endow $\P(E)$ with the topology of weak convergence. Fix $p \ge 1$ throughout the paper. For a complete separable metric space $(E,d)$, let $\P^p(E)$ denote the set of $\mu \in \P(E)$ with $\int_Ed(x,x_0)^p < \infty$ for some  $x_0 \in E$. Endow $\P^p(E)$ with the $p$-Wasserstein metric,
\begin{align}
\ell_{E,p}(\mu,\nu) = \inf\left\{\int d^p\,d\pi : \pi \in \P(E \times E) \text{ has marginals } \mu \text{ and } \nu\right\}. \label{def:wasserstein}
\end{align}
As is well known, $\ell_{E,p}(\mu_n,\mu) \rightarrow 0$ if and only if $\int\varphi\,d\mu_n \rightarrow \int\varphi\,d\mu$ for every continuous function $\varphi$ satisfying $|\varphi(x)| \le c(1+d(x,x_0)^p)$ for all $x \in E$, for some $c \ge 0$.
The Borel $\sigma$-field of $\P^p(E)$ is the same as the one induced by the Borel $\sigma$-field of $\P(E)$, which is in turn equivalent to the $\sigma$-field induced by the evaluations $\P^p(E) \ni \mu \mapsto \mu(C)$ for Borel sets $C \subset E$.
For our purposes, the most pertinent topological properties of $\P^p(E)$ are summarized in the appendix of \cite{lacker-mfgcontrolledmartingaleproblems}, but see also \cite[Chapter 7]{villanibook} for more details.

A time horizon $T > 0$ is fixed throughout, along with three exponents $(p',p,p_\sigma)$, an initial distribution $\lambda \in \P(\R^{d})$, and functions
\begin{align*}
(b,\sigma,f) &: [0,T] \times \R^{d} \times \P^p(\R^{d}) \times A \rightarrow \R^d \times \R^{d \times d_W} \times \R, \\
g &: \R^{d} \times \P^p(\R^{d}) \rightarrow \R.
\end{align*}
Here $d$ and $d_W$ denote the respective dimensions of the state and noise processes.
The following standing assumptions, heavily inspired by \cite{lacker-mfgcontrolledmartingaleproblems}, are in force throughout the paper:

\begin{assumption}{\textbf{A}} \label{assumption:A}
{\ }
\begin{enumerate}
\item[(A.1)] $A$ is a closed subset of a Euclidean space.
\item[(A.2)] The exponents satisfy $p' > p \ge 1 \vee p_\sigma$ and $p' \ge 2 \ge p_\sigma \ge 0$, and also $\lambda \in \P^{p'}(\R^{d})$.
\item[(A.3)] The functions $b$ and $\sigma$, are jointly continuous, and $f$ and $g$ are upper semicontinuous.
\item[(A.4)] There exists $c_1 > 0$ such that, for all $(t,x,m,a)$,
\begin{align*}
|b(t,x,m,a)| &\le c_1\left[1 + |x| + \left(\int_{\R^{d}}|z|^pm(dz)\right)^{1/p} + |a|\right], \\
|\sigma(t,x,m,a)|^2 &\le c_1\left[1 + |x|^{p_\sigma} + \left(\int_{\R^{d}}|z|^pm(dz)\right)^{p_\sigma/p} + |a|^{p_\sigma}\right].
\end{align*}
\item[(A.5)] There exist $c_2, c_3 > 0$ such that, for each $(t,x,m,a)$, 
\begin{align*}
g(x,m) &\le c_2\left(1 + |x|^p + \int_{\R^{d}}|z|^pm(dz)\right), \\
g(x,m) &\ge -c_2\left(1 + |x|^{p'} + \int_{\R^{d}}|z|^{p'}m(dz)\right), \\
f(t,x,m,a) &\le c_2\left(1 + |x|^p + \int_{\R^{d}}|z|^pm(dz)\right) - c_3|a|^{p'}, \\
f(t,x,m,a) &\ge -c_2\left(1 + |x|^{p'} + \int_{\R^{d}}|z|^{p'}m(dz) + |a|^{p'}\right).
\end{align*}
\end{enumerate}
\end{assumption}

These minimal assumptions will suffice for an existence theorem. The least innocuous of these is the coercivity assumption (A.5) on the running objective $f$, which is crucial for compactness purposes (see Lemmas \ref{le:estimate-optimal} and \ref{le:estimate-optimal-n}). It should be noted that our methods also apply to a compact control space $A$ and bounded coefficients $(b,\sigma,f,g)$, continuous with respect to weak convergence, and in fact the proofs become significantly simpler in this case. The limit theorems require an additional assumption, mainly for providing uniqueness of the controlled McKean-Vlasov equations:

\begin{assumption}{\textbf{B}} \label{assumption:B}
There exists $c_1' > 0$ such that, for all $(t,x,x',m,m',a)$,
\begin{align*}
|b(t,x,m,a) - b(t,x',m',a)| &+ |\sigma(t,x,m,a) - \sigma(t,x',m',a)| \le c_1'\left(|x-x'| + \ell_{\R^{d},p}(m,m')\right).
\end{align*}
Moreover, the functions $f$ and $g$ are continuous.
\end{assumption}

\subsection{Relaxed controls and canonical spaces} \label{se:relaxed-canonical}
The space $\V$ of \emph{relaxed controls} is defined as the set of measures $q$ on $[0,T] \times A$ with first marginal equal to Lebesgue measure and with
\[
\int_{[0,T] \times A}|a|^pq(dt,da) < \infty.
\]
Noting that each $q \in \V$ has total mass $T$, we may endow $\V$ with a suitable scaling of the $p$-Wasserstein distance. More precisely, equip $\V$ with the metric
\begin{align}
d_\V(q^1,q^2) = \ell_{[0,T] \times A,p}(q^1/T,q^2/T), \label{def:d_V}
\end{align}
where $\ell$ is the Wasserstein distance defined in \eqref{def:wasserstein} relative to the metric on $[0,T] \times A$ given by $((t,a),(t',a')) \mapsto |t-t'| + |a-a'|$.
Each $q \in \V$ is identified with a measurable function $[0,T] \ni t \mapsto q_t \in \P^p(A)$, defined uniquely up to almost sure equality by $q(dt,da)=dtq_t(da)$. Note that $\V$ is a Polish space because $A$ is.
A relaxed control of the form $q(dt,da)=dt\delta_{\alpha(t)}(da)$ for some measurable function $\alpha : [0,T] \rightarrow A$ is called a \emph{strict control}. 
It is known that there exists a version of the map $[0,T] \times \V \ni (t,q) \mapsto q_t \in \P^p(A)$ which is predictable with respect to the filtration $\FF^\Lambda=(\F^\Lambda_t)_{t \in [0,T]}$, where $\F^\Lambda_t$ is generated by the maps $q \mapsto q([0,s] \times C)$, where $s \le t$ and $C \subset A$ is Borel (see, e.g., \cite[Lemma 3.2]{lacker-mfgcontrolledmartingaleproblems}). In particular, this lets us freely identify any random element $\Lambda$ of $\V$ with a corresponding $\P^p(A)$-valued $\FF^\Lambda$-predictable process $(\Lambda_t)_{t \in [0,T]}$.

We will work also with the path space $\C^d = C([0,T];\R^d)$, equipped with the supremum norm $\|x\| = \sup_{t \in [0,T]}|x_t|$. For $m \in \P(\C^d \times \V)$, let $m^x$ denote the $\C^d$-marginal. For $m^x \in \P(\C^d)$ and $t \in [0,T]$, let $m^x_t \in \P(\R^d)$ denote the time-$t$ marginal, i.e., the image of $m^x$ under the map $x \mapsto x_t$. Equip $\C^d\times\V$ with the metric
\begin{align}
d_{\C^d\times\V}((x,q),(x',q')) = \|x-x'\| + d_\V(q,q'), \label{def:productmetric}
\end{align}
where $d_\V$ was defined in \eqref{def:d_V}, and $\|\cdot\|$ is the supremum norm on $\C^d$. Then $\P^p(\C^d\times\V)$ is endowed with the corresponding Wasserstein metric $\ell_{\C^d\times\V,p}$ induced by $d_{\C^d\times\V}$. We will state our main limit theorems in terms of $\P^p(\P^p(\C^d\times\V))$, which is equipped with the Wasserstein metric $\ell_{\P^p(\C^d\times\V),p}$ induced by equipping $\P^p(\C^d\times\V)$ with the metric $\ell_{\C^d\times\V,p}$. 
See again the appendix of \cite{lacker-mfgcontrolledmartingaleproblems} for a more detailed discussion of this topology. For now, simply note that if $P_n \rightarrow P$  in $\P^p(\P^p(\C^d\times\V))$ then a fortiori $P_n \rightarrow P$  in $\P(\P(\C^d\times\V))$ (i.e., weakly).

\subsection{The mean field control problem}

We begin by describing the strong form of the McKean-Vlasov control problem. Suppose we are given a filtered probability space $(\Omega,\F,\FF,\PP)$ supporting a $d_W$-dimensional $\FF$-Wiener process $W$ and an $\F_0$-measurable $\R^d$-valued random variable $\xi$. Here $\FF$ is the augmented filtration generated by the initial state and Wiener process, i.e., the (right-continuous) completion of $(\sigma(\xi,W_s : s \le t))_{t \ge 0}$. An $\FF$-progressively measurable $A$-valued process $\alpha$ is called an \emph{admissible control} if it satisfies
\[
\E\int_0^T|\alpha_t|^pdt < \infty,
\]
and if there exists a unique square-integrable strong solution on $(\Omega,\F,\FF,\PP)$ of the McKean-Vlasov SDE
\[
dX_t = b(t,X_t,\PP \circ X_t^{-1},\alpha_t)dt + \sigma(t,X_t,\PP \circ X_t^{-1},\alpha_t)dW_t, \quad X_0 = \xi.
\]
The strong form of the McKean-Vlasov control problem is to maximize
\[
\E\left[\int_0^Tf(t,X_t,\PP \circ X_t^{-1},\alpha_t)dt + g(X_T,\PP \circ X_T^{-1})\right]
\]
over all admissible controls. Note that an admissible control induces a probability measure $\PP \circ (X,dt\delta_{\alpha_t}(da))^{-1}$ on $\C^d\times\V$. Let $\RC^s$ denote the set of such measures, and refer to an element of $\RC^s$ as a \emph{strong control}. The definition of $\RC^s$ is insensitive to the choice of probability space $(\Omega,\F,\FF,\PP)$, provided that it satisfies the above requirements. Hence, we make no further reference to this particular $(\Omega,\F,\FF,\PP)$.

We next describe the relaxed form of the control problem, abandoning the probability space of the previous paragraph. 
Let $(X,\Lambda)$ denote the projection maps or canonical processes on $\C^d \times \V$. As in Section \ref{se:relaxed-canonical}, we may conflate the random measure $\Lambda(dt,da)$ and the $\P^p(A)$-valued process $(\Lambda_t)_{t \in [0,T]}$. The space $\C^d \times \V$ is equipped with the filtration generated by these canonical processes $(X_t,\Lambda_t)_{t \in [0,T]}$.
Define the generator $\L$ to act on smooth compactly supported functions $\varphi$ by
\begin{align}
\L\varphi(t,x,m,a) = b(t,x,m,a) \cdot \nabla\varphi(x) + \frac{1}{2}\mathrm{Tr}\left[\sigma\sigma^\top(t,x,m,a)\nabla^2\varphi(x)\right], \label{def:generator}
\end{align}
where $\nabla$ and $\nabla^2$ denote the gradient and Hessian, respectively. That is, $\L$ maps a function of $\R^d$ to a function of $[0,T] \times \R^d \times \P^p(\R^d) \times A$.
Define $\RC$ to be the set of $m \in \P^p(\C^d \times \V)$ such that $m^x_0 = \lambda$ (recalling that $m^x_t := m \circ X_t^{-1}$) and the process 
\[
\varphi(X_t) - \int_0^t\int_A\L\varphi(s,X_s,m^x_s,a)\Lambda_s(da)dt
\]
is a $m$-martingale for every smooth compactly supported $\varphi$. It is a straightforward consequence of It\^o's formula that $\RC^s \subset \RC$; that is, a strong control induces a relaxed control. An element of $\RC$ is called a \emph{control}, or a \emph{relaxed control} for emphasis. We say a control $m \in \RC$ is \emph{Markovian} if there exists a measurable map $\hat{\alpha} : [0,T] \times \R^d \rightarrow A$ such that $m(\Lambda_t = \delta_{\hat{\alpha}(t,X_t)}, \ a.e. \ t \in [0,T])=1$.
See Remark \ref{re:markovian} below for a caveat regarding this use of the term \emph{Markovian}.

\begin{remark} \label{re:rcnonempty}
Under assumption \ref{assumption:A}, the set $\RC$ is nonempty. In particular, for any fixed constant control $a_0 \in A$, there exists $m \in \RC$ such that $m(\Lambda_t = \delta_{a_0}, \text{ a.e. } t \in [0,T]) = 1$. Indeed, this follows from the results of G\"artner \cite[Theorem 2.9]{gartnermkv} or Funaki \cite[Theorem 2.1]{funaki1984certain} on McKean-Vlasov equations. When both assumptions \ref{assumption:A} and \ref{assumption:B} hold, $\RC^s$ is also nonempty, by standard Lipschitz arguments (see \cite[Section I]{sznitman} or the proof of \cite[Lemma 3.1]{funaki1984certain}).
\end{remark}

The McKean-Vlasov control problem, in relaxed form, is to maximize 
\begin{align}
\Gamma(m) := \E^m\left[\int_0^T\int_Af(t,X_t,m^x_t,a)\Lambda_t(da)dt + g(X_T,m^x_T)\right] \label{def:Gamma}
\end{align}
over all choices of $m \in \RC$. Note that $\Gamma : \P^p(\C^d\times\V) \rightarrow \R \cup \{-\infty\}$ is well-defined because of assumption (A.5). Let $\RC^*$ denote the set of optimal controls, i.e., the set of $m \in \RC$ for which $\Gamma(m) \ge \Gamma(\tilde{m})$ for all $\tilde{m} \in \RC$. We are now ready to state the main existence results, with proofs deferred to Sections \ref{se:limittheorems-proof}.

\begin{theorem} \label{th:existence}
Under assumption \ref{assumption:A}, the set $\RC^*$ is nonempty. That is, there exists an optimal relaxed control.
\end{theorem}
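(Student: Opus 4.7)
The plan is to apply the direct method on $\RC$, equipped with the $p$-Wasserstein topology inherited from $\P^p(\C^d\times\V)$, following the compactification strategy of El Karoui et al.\ and Haussmann--Lepeltier. By Remark~\ref{re:rcnonempty}, $\RC\neq\emptyset$; combining the upper bounds in (A.5) with standard moment estimates for the McKean--Vlasov SDE under (A.4) shows $M:=\sup_{m\in\RC}\Gamma(m)<\infty$. Fix a maximizing sequence $m_n\in\RC$ with $\Gamma(m_n)\to M$.

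\textbf{Compactness.} The decisive ingredient is the strict coercivity $-c_3|a|^{p'}$ in the upper bound on $f$ in (A.5). Combined with the upper bounds on $f$ and $g$, it yields
\[
c_3\,\E^{m_n}\!\int_0^T\!\!\int_A|a|^{p'}\Lambda_t(da)\,dt \le C\bigl(1+\E^{m_n}\|X\|^p\bigr)-\Gamma(m_n).
\]
An a priori estimate bounding $\E^{m_n}\|X\|^{p'}$ in terms of $\E^{m_n}\int\!\int|a|^{p'}\Lambda_t(da)\,dt$, derived from (A.4) via BDG and Gr\"onwall (the anticipated role of Lemma~\ref{le:estimate-optimal}), absorbs the state norm and produces a joint uniform bound on both quantities. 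Since $p'>p$ by (A.2), these uniform $p'$-moments force relative compactness in $\P^p(\C^d\times\V)$: tightness of the $\V$-marginals follows from Markov's inequality, while tightness of the $\C^d$-marginals follows from an Aldous-type criterion applied to the semimartingale decomposition of $X$ supplied by the martingale problem. Extract a subsequence, still denoted $m_n$, with $m_n\to m^*$ in $\P^p(\C^d\times\V)$.

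\textbf{Stability and upper semicontinuity.} To show $m^*\in\RC$, note that $m^{*,x}_0=\lambda$ by weak convergence, and for the martingale property one must pass to the limit in the identity
\[
\E^{m_n}\!\left[\Bigl(\varphi(X_t)-\varphi(X_s)-\int_s^t\!\!\int_A\!\L\varphi(r,X_r,m^x_{n,r},a)\Lambda_r(da)\,dr\Bigr)h\right]=0,
\]
valid for every smooth compactly supported $\varphi$, $0\le s<t\le T$, and bounded continuous $h$ of $(X_r,\Lambda_r)_{r\le s}$. Joint continuity of $b,\sigma$ from (A.3), combined with $m^x_{n,r}\to m^{*,x}_r$ in $\P^p(\R^d)$ for each $r$ (a consequence of $m_n\to m^*$ in $\P^p$ and the continuity of $x\mapsto x_r$), gives convergence of the integrand; the growth (A.4), compact support of the derivatives of $\varphi$, and the uniform $p'$-moments yield uniform integrability, so Vitali's theorem closes the limit. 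Upper semicontinuity of $\Gamma$ on the uniformly $p'$-bounded set then follows from upper semicontinuity of $f,g$ in (A.3) and the $p$-growth upper bounds in (A.5), via a standard Fatou-type argument for Wasserstein-converging sequences; this yields $\Gamma(m^*)\ge\limsup_n\Gamma(m_n)=M$, hence $m^*\in\RC^*$.

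The main obstacle is the martingale-problem stability: the mean-field argument $m^{*,x}_r$ appearing inside $\L\varphi$ is itself determined by the limit $m^*$, which is the essence of the McKean--Vlasov nonlinearity and prevents a direct application of classical stability results for controlled martingale problems. What rescues the passage to the limit is that (A.3) and (A.4) give continuity and compatible growth of $b,\sigma$ in the measure argument with respect to precisely the $p$-Wasserstein metric in which compactness was obtained, so no strengthening of Assumption~\ref{assumption:A} is needed at this step.
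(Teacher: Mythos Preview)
Your proposal is correct and takes essentially the same route as the paper. The paper packages the argument by first proving that the set $\RC^\epsilon$ of $\epsilon$-optimal controls is compact (Lemma~\ref{le:rceps-compact}, via Lemmas~\ref{le:estimate-MF}, \ref{le:estimate-optimal}, and Proposition~\ref{pr:itocompactness}) and that $\Gamma$ is upper semicontinuous on $\P^p(\C^d\times\V)$ (Lemma~\ref{le:gamma-usc}), and then applies the extreme value theorem; you instead work directly with a maximizing sequence, but the underlying ingredients---coercivity of $f$ for uniform $p'$-moments, Aldous-type tightness for the state marginals, stability of the McKean--Vlasov martingale problem under $\P^p$-convergence, and Fatou-type upper semicontinuity of $\Gamma$---are identical.
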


Next, we state an existence result for Markovian optimal controls, under an additional assumption, familiar in the control theory literature from the work of Filippov \cite{filippov-convexity}.

\begin{assumption}{\textbf{C}} \label{assumption:C}
For each $(t,x,m) \in [0,T] \times \R^d \times \P^p(\R^d)$, the following set is convex:
\[
K(t,x,\mu) := \left\{\left(b(t,x,m,a),\sigma\sigma^\top(t,x,m,a),z\right) : a \in A, \ z \le f(t,x,m,a)\right\} \subset \R^d \times \R^{d \times d} \times \R.
\]
\end{assumption}

\begin{theorem} \label{th:markovian}
Suppose assumptions \ref{assumption:A} and \ref{assumption:C} hold. Then, for each $m \in \RC$, there exists a Markovian control $\widetilde{m} \in \RC$ satisfying $\widetilde{m}^x_t= m^x_t$ for all $t \in [0,T]$ as well as $\Gamma(\widetilde{m}) \ge \Gamma(m)$. In particular, there exists an optimal Markovian control.
\end{theorem}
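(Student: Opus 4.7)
The plan is to use the Brunick--Shreve mimicking theorem to replace the relaxed control $m$ by one with Markovian coefficients having the same one-dimensional marginals, and then to realize those Markovian coefficients through a strict Markovian control $\hat{\alpha}(t,x)$ by means of the Filippov-type convexity assumption \ref{assumption:C}. Since $m^x_t$ is fixed throughout, the McKean--Vlasov nonlinearity should not obstruct the argument: the measure flow $(m^x_t)_{t \in [0,T]}$ enters only parametrically into the coefficients $b(t,x,m^x_t,a)$ and $\sigma(t,x,m^x_t,a)$, and the mimicking construction preserves it exactly.

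Concretely, fix $m\in\RC$. By the martingale problem defining $\RC$, the canonical $X$ under $m$ is an It\^o-type process with drift $\bar{b}_t:=\int_A b(t,X_t,m^x_t,a)\,\Lambda_t(da)$ and diffusion matrix $\bar{a}_t:=\int_A (\sigma\sigma^\top)(t,X_t,m^x_t,a)\,\Lambda_t(da)$. Defining
\[
\hat{b}(t,x) := \E^m[\bar{b}_t\mid X_t=x],\quad \hat{a}(t,x) := \E^m[\bar{a}_t\mid X_t=x], \quad \hat{f}(t,x) := \E^m\!\!\left[\int_A f(t,X_t,m^x_t,a)\,\Lambda_t(da)\,\Big|\,X_t=x\right],
\]
the Brunick--Shreve generalization of Gy\"ongy's theorem produces a weak solution $\widetilde X$ (on some filtered probability space) of a martingale problem with generator $\hat{b}(t,x)\cdot\nabla + \frac12\mathrm{Tr}[\hat{a}(t,x)\nabla^2]$, subject to the marginal matching $\mathrm{Law}(\widetilde X_t)=m^x_t$ for all $t$. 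Now the content of assumption \ref{assumption:C} is that for each $(t,x)$ the closed convex set $K(t,x,m^x_t)$ contains every convex combination (and, by a limiting argument, every conditional expectation) of points $(b(t,x,m^x_t,a),(\sigma\sigma^\top)(t,x,m^x_t,a),z)$ with $z\le f(t,x,m^x_t,a)$. Hence $(\hat{b}(t,x),\hat{a}(t,x),\hat{f}(t,x))\in K(t,x,m^x_t)$, and a Kuratowski--Ryll-Nardzewski measurable selection yields a jointly measurable $\hat{\alpha}:[0,T]\times\R^d\to A$ with
\[
b(t,x,m^x_t,\hat{\alpha}(t,x))=\hat{b}(t,x),\quad (\sigma\sigma^\top)(t,x,m^x_t,\hat{\alpha}(t,x))=\hat{a}(t,x),\quad f(t,x,m^x_t,\hat{\alpha}(t,x))\ge \hat{f}(t,x).
\]

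Setting $\widetilde{\Lambda}_t(da):=\delta_{\hat{\alpha}(t,\widetilde X_t)}(da)$ and $\widetilde m:=\mathrm{Law}(\widetilde X,\,dt\,\widetilde\Lambda_t(da))$, the first two identities above show that the mimicking martingale problem for $\widetilde X$ coincides with the controlled martingale problem under $\widetilde m$ evaluated at $(t,\widetilde X_t,m^x_t,\hat{\alpha}(t,\widetilde X_t))$; combined with $\widetilde m^x_t=m^x_t$ (the marginal condition), this yields $\widetilde m\in\RC$, and it is Markovian by construction. Finally, using the third inequality,
\[
\Gamma(\widetilde m)=\int_0^T\!\!\!\int f(t,x,m^x_t,\hat\alpha(t,x))\,m^x_t(dx)\,dt + \int g(x,m^x_T)\,m^x_T(dx) \ge \int_0^T\!\!\!\int\hat f(t,x)\,m^x_t(dx)\,dt + \int g\,dm^x_T = \Gamma(m),
\]
and the existence of an optimal Markovian control follows by applying this construction to an optimizer guaranteed by Theorem \ref{th:existence}.

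The main obstacle is the measurable selection step. It requires (i) that $\hat b,\hat a,\hat f$ admit jointly $(t,x)$-measurable versions --- which is standard but needs the regular conditional law of $(\bar b_t,\bar a_t,\int f(t,X_t,m^x_t,a)\Lambda_t(da))$ given $X_t$ to be chosen measurably in $t$ --- and (ii) that the set-valued map $(t,x)\mapsto\{a\in A:b(t,x,m^x_t,a)=\hat b(t,x),\ (\sigma\sigma^\top)(t,x,m^x_t,a)=\hat a(t,x),\ f(t,x,m^x_t,a)\ge\hat f(t,x)\}$ is measurable with nonempty closed values. Nonemptiness is exactly assumption \ref{assumption:C} together with the fact that $K(t,x,m^x_t)$ is closed; closedness in turn uses continuity of $(b,\sigma)$, upper semicontinuity of $f$, and crucially the coercivity $f(t,x,m,a)\le c_2(\cdots)-c_3|a|^{p'}$ in (A.5), which forces the ``recession'' in the $a$-direction to be harmless and rules out sequences $a_n\to\infty$ along which the targeted point could escape.
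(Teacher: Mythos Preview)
Your proposal is correct and follows essentially the same strategy as the paper: freeze the measure flow $(m^x_t)$, take conditional expectations of $(b,\sigma\sigma^\top,f)$ given $X_t$, invoke the Brunick--Shreve mimicking theorem to produce a process with these Markovian coefficients and identical marginals, and use assumption~\ref{assumption:C} plus a measurable selection to realize the conditional expectations through a strict feedback $\hat\alpha(t,x)$ with $f(\cdot,\hat\alpha)\ge\hat f$. The only cosmetic difference is the order of operations: the paper performs the measurable selection first (citing the Haussmann--Lepeltier and Dufour--Stockbridge selection lemmas directly) and then applies Brunick--Shreve to the already-Markovianized coefficients, whereas you mimic first with $(\hat b,\hat a)$ and select afterwards; the two orderings are equivalent since both rely on the same conditional-expectation identities.
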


The next theorem states that the optimal value of the strong and relaxed formulations are the same. The additional assumptions are minor and can likely be dispensed with. The proof, deferred to Section \ref{se:strongvsweak}, requires some delicate approximations of martingale measures, for which we employ a result of M\'el\'eard \cite{meleard1992martingale}. An alternative proof is possible under less easily verifiable weak uniqueness assumptions, by adapting the methods of \cite{elkaroui-compactification,karouitan-capacities}.

\begin{theorem} \label{th:strongequalsweak-MF}
Suppose assumptions \ref{assumption:A} and \ref{assumption:B} hold. Assume also that $p'=2$ and that the initial condition $\lambda$ satisfies $\int|x|^{p''}\lambda(dx) < \infty$ for some $p'' > p'$. Then the optimal value of the McKean-Vlasov control problem is the same for both the relaxed and strong formulations. That is,
\[
\sup_{m \in \RC^s}\Gamma(m) = \sup_{m \in \RC}\Gamma(m).
\]
\end{theorem}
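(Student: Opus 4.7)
The inclusion $\RC^s\subset\RC$ is immediate from It\^o's formula and was noted in Section~\ref{se:relaxed-canonical}, so one direction $\sup_{\RC^s}\Gamma\le \sup_{\RC}\Gamma$ is trivial. For the nontrivial direction, the plan is to show that every $m\in\RC$ can be approximated by a sequence $m_n\in\RC^s$ with $\Gamma(m_n)\to\Gamma(m)$. My approach freezes the measure flow, uses M\'el\'eard's martingale-measure approximation to pass from a relaxed solution to strong solutions of a classical (non-mean-field) SDE with strict controls, and then reintroduces the mean-field coupling using the Lipschitz hypothesis \ref{assumption:B}.

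In detail, fix $m\in\RC$, set $\mu_t:=m^x_t$, and freeze the measure flow: the canonical pair $(X,\Lambda)$ on $(\C^d\times\V,m)$ solves the relaxed martingale problem associated with $\bar b(t,x,a):=b(t,x,\mu_t,a)$ and $\bar\sigma(t,x,a):=\sigma(t,x,\mu_t,a)$, which are coefficients of standard (non--McKean-Vlasov) type with measurable dependence on $t$ and continuous dependence on $(x,a)$. Lemmas \ref{le:estimate-optimal} and similar $L^{p'}$ moment bounds from Section~\ref{se:estimates}, combined with $\int|x|^{p''}\lambda(dx)<\infty$, give $\E^m\!\!\int_0^T\!\int_A|a|^{p'}\Lambda_t(da)dt<\infty$ and $\E^m\|X\|^{p''}<\infty$. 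Apply M\'el\'eard's theorem \cite{meleard1992martingale} to the orthogonal martingale measure canonically associated with $(X,\Lambda,\bar\sigma)$: on a suitable extension $(\widetilde\Omega,\widetilde\F,\widetilde\FF,\widetilde\PP)$ we obtain a $d_W$-dimensional $\widetilde\FF$-Brownian motion $\widetilde W$, an $\F_0$-measurable $\xi\sim\lambda$, and a sequence $(\alpha^n)$ of $A$-valued $\widetilde\FF$-progressively measurable processes such that the strong solutions of
\begin{equation*}
dX^n_t=\bar b(t,X^n_t,\alpha^n_t)dt+\bar\sigma(t,X^n_t,\alpha^n_t)d\widetilde W_t,\qquad X^n_0=\xi,
\end{equation*}
satisfy $X^n\to X$ in probability in $\C^d$ (with $(X,\Lambda)$ lifted by the canonical projection), and moreover $\sup_n\widetilde\E\!\int_0^T|\alpha^n_t|^{p'}dt<\infty$ because the strict controls are constructed from $\Lambda$ by a conditioning/sampling procedure that preserves $L^{p'}$ norms.

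Next, reintroduce the mean-field interaction: assumption \ref{assumption:B} and standard Lipschitz McKean-Vlasov theory (see Remark~\ref{re:rcnonempty}) yield a unique strong solution $\widetilde X^n$ of
\begin{equation*}
d\widetilde X^n_t=b(t,\widetilde X^n_t,\widetilde\mu^n_t,\alpha^n_t)dt+\sigma(t,\widetilde X^n_t,\widetilde\mu^n_t,\alpha^n_t)d\widetilde W_t,\qquad \widetilde\mu^n_t=\widetilde\PP\circ(\widetilde X^n_t)^{-1},\ \widetilde X^n_0=\xi,
\end{equation*}
so that $\widetilde m^n:=\widetilde\PP\circ(\widetilde X^n,dt\,\delta_{\alpha^n_t}(da))^{-1}\in\RC^s$. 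Since $b,\sigma$ are Lipschitz in $(x,m)$ uniformly in $a$, a Gr\"onwall argument bounds $\widetilde\E\|\widetilde X^n-X^n\|^2$ in terms of $\int_0^T[\widetilde\E|\widetilde X^n_t-X^n_t|^2+\ell^2_{\R^d,p}(\widetilde\mu^n_t,\mu_t)]dt$ and, using $\ell^2_{\R^d,p}(\widetilde\mu^n_t,\mu_t)\le 2\widetilde\E|\widetilde X^n_t-X^n_t|^2+2\widetilde\E|X^n_t-X_t|^2$ (the latter goes to $0$ by Step~2), gives $\widetilde\E\|\widetilde X^n-X^n\|^2\to 0$, and hence $\widetilde X^n\to X$ in $L^2$.

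Finally, to pass to the limit in $\Gamma(\widetilde m^n)$, I would upgrade the $L^2$ convergence to convergence in $\ell_{\C^d,p}$ (and hence $\ell_{\C^d\times\V,p}$) via uniform $L^{p''}$ bounds on $\|\widetilde X^n\|$ coming from $\int|x|^{p''}\lambda(dx)<\infty$ and the moment estimates of Section~\ref{se:estimates}; continuity of $f$ and $g$ (assumption~\ref{assumption:B}) together with the polynomial bounds (A.5) and uniform $L^{p'}$ integrability of $\alpha^n$ then give $\Gamma(\widetilde m^n)\to\Gamma(m)$ by Vitali-type arguments. Since $m\in\RC$ was arbitrary, $\sup_{\RC^s}\Gamma\ge\sup_\RC\Gamma$, completing the proof.

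The main obstacle is Step~2: M\'el\'eard's theorem is stated for classical controlled SDEs, and care is needed to apply it with the time-dependent frozen coefficients $\bar b,\bar\sigma$ (only Borel-measurable in $t$ through $\mu_t$) while simultaneously ensuring the approximating strict controls retain the $L^{p'}$ integrability required to handle the coercive running cost. The hypothesis $p''>p'=2$ on $\lambda$ is exactly what is needed to upgrade the $L^2$ convergence of state processes to the $\ell_{\C^d,p}$-convergence necessary for the polynomially growing, but merely continuous, cost functions $f$ and $g$.
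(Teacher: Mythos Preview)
Your overall architecture---freeze the measure flow, approximate the relaxed control by strict ones, then unfreeze via a Gr\"onwall argument---is close in spirit to the paper's proof, but there is a genuine gap concerning what it means for a control to be \emph{strong}.

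Recall that $\RC^s$ consists of laws induced by controls $\alpha$ that are progressively measurable with respect to the filtration generated by $(\xi,W)$. In your Step~2 you invoke M\'el\'eard's result to produce, on an extension $(\widetilde\Omega,\widetilde\F,\widetilde\FF,\widetilde\PP)$, a Brownian motion $\widetilde W$ and strict controls $\alpha^n$ that are $\widetilde\FF$-progressive. But $\widetilde\FF$ is strictly larger than the filtration of $(\xi,\widetilde W)$ in general---indeed, the relaxed control $\Lambda$ itself lives in $\widetilde\FF$---so there is no reason your $\alpha^n$ are functionals of $(\xi,\widetilde W)$ alone. Consequently the claim $\widetilde m^n\in\RC^s$ is not justified; what you have constructed are at best \emph{weak} strict controls, i.e., elements of $\RC$ with $\Lambda_t=\delta_{\alpha^n_t}$, not strong ones. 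M\'el\'eard's theorem by itself cannot close this gap: it approximates martingale measures and (together with the chattering lemma) passes from relaxed to strict, but it does not make the resulting strict control adapted to the Brownian filtration.

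The paper resolves this with an additional step (its Step~3): once one has a strict control $\alpha$ driven by a Brownian motion $W$ on some filtration, one approximates $\alpha$ in joint law with $W$ by $\FF^W$-adapted processes $\alpha^n$, using a density result from \cite{carmonadelaruelacker-mfgcommonnoise}. Because the approximation is only in law, the subsequent convergence $m^n\to m$ is obtained by a compactness-plus-identification argument (tightness via Proposition~\ref{pr:itocompactness}, identification of the limiting SDE, and uniqueness in law via Yamada--Watanabe for McKean--Vlasov equations), rather than by a Gr\"onwall estimate. Your proposal is missing this entire mechanism. A secondary point: M\'el\'eard's result does not itself produce strict controls---the paper first truncates to bounded controls (Step~1), then applies the chattering lemma to obtain strict $\alpha^n$ (Step~2), and only then invokes M\'el\'eard to align the driving noise with a single Brownian motion.
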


Given the unusual nature of the martingale problems defining $\RC$, it may be unclear to the uninitiated reader what exactly we have proven to exist in Theorems \ref{th:existence} and \ref{th:markovian}.
This is clarified by the following proposition, which will be useful in the proofs as well.
When the control is present in the volatility, the most useful SDE representation involves martingale measures. Only the very basics of the theory of martingale measures are needed, and these facts are reviewed as they come up. All of the relevant definitions and results are concisely summarized in \cite{elkarouimeleard-martingalemeasure}, but refer to the original monograph of Walsh \cite{walsh-introspde} for a more thorough treatment. When the control is absent from the volatility $\sigma$, the martingale measure $N(da,dt)$ in the following proposition can be replaced with a Wiener process $dW_t$:

\begin{proposition} [Theorem IV-2 of \cite{elkarouimeleard-martingalemeasure}] \label{pr:SDErepresentation}
The set $\RC$ is precisely the set of laws $\PP \circ (X,\Lambda)^{-1}$, where:
\begin{enumerate}
\item $(\Omega,\F,\FF,\PP)$ is a filtered probability space supporting a $d$-dimensional adapted process $X$, a $\P^p(A)$-valued predictable process $\Lambda$, and a (column) vector $N=(N^1,\ldots,N^{d_W})$ of orthogonal $\FF$-martingale measures on $A \times [0,T]$, each with intensity measure $\Lambda_t(da)dt$.
\item $\PP \circ X_0^{-1} = \lambda$.
\item $\E\left[\sup_{t \in [0,T]}|X_t|^p + \int_0^T\int_A|a|^p\Lambda_t(da)dt\right] < \infty$.
\item The state equation holds:
\[
dX_t = \int_Ab(t,X_t,\PP \circ X_t^{-1},a)\Lambda_t(da)dt + \int_A\sigma(t,X_t,\PP \circ X_t^{-1},a)N(da,dt).
\]
\end{enumerate}
The set of Markovian controls is precisely the set of laws $\PP \circ (X,\Lambda)^{-1}$, where:
\begin{enumerate}
\item $(\Omega,\F,\FF,\PP)$ is a filtered probability space supporting a $d$-dimensional adapted process $X$ and a $d_W$-dimensional $\FF$-Wiener process $W$.
\item $\PP \circ X_0^{-1} = \lambda$.
\item There exists a measurable function $\hat{\alpha} : [0,T] \times \R^d \rightarrow A$ such that:
\begin{enumerate}
\item The state equation holds,
\begin{align}
dX_t = b(t,X_t,\PP \circ X_t^{-1},\hat{\alpha}(t,X_t))dt + \sigma(t,X_t,\PP \circ X_t^{-1},\hat{\alpha}(t,X_t))dW_t, \label{def:markov-sde}
\end{align}
\item $\E\left[\sup_{t \in [0,T]}|X_t|^p + \int_0^T|\hat{\alpha}(t,X_t)|^pdt\right] < \infty$.
\item $\Lambda_t = \delta_{\hat{\alpha}(t,X_t)}$ for a.e. $t \in [0,T]$, almost surely.
\end{enumerate}
\end{enumerate}
\end{proposition}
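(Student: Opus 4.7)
The plan is to treat this proposition as essentially a direct application of the standard equivalence between controlled martingale problems and controlled SDEs driven by orthogonal martingale measures (Theorem IV-2 of \cite{elkarouimeleard-martingalemeasure}), after observing that the McKean-Vlasov mean-field term introduces no new difficulty. The key observation is that once $m \in \RC$ is fixed, the flow $t \mapsto m^x_t \in \P^p(\R^d)$ is a \emph{deterministic} measurable curve, so the martingale problem defining membership in $\RC$ collapses to a standard controlled martingale problem on $\C^d \times \V$ with the time-dependent coefficients $\tilde b(t,x,a) := b(t, x, m^x_t, a)$ and $\tilde\sigma(t,x,a) := \sigma(t, x, m^x_t, a)$. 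Assumption (A.4) together with the $p$-integrability built into $\P^p(\C^d \times \V)$ guarantees that these reduced coefficients satisfy the growth hypotheses required by \cite{elkarouimeleard-martingalemeasure}.

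Given this reduction, the implication ``SDE representation $\Rightarrow m \in \RC$'' is a direct computation via It\^o's formula applied to stochastic integrals against orthogonal martingale measures: if $X$ satisfies the stated SDE with $N^j$ having intensity $\Lambda_t(da)\,dt$, then for any smooth compactly supported $\varphi$ the process $\varphi(X_t) - \int_0^t \int_A \L\varphi(s, X_s, m^x_s, a)\,\Lambda_s(da)\,ds$ is a local martingale by orthogonality and the intensity identification, and assumption (iii) together with (A.4) upgrades this to a true martingale. The converse direction is the crux of \cite{elkarouimeleard-martingalemeasure}: starting from $m \in \RC$, one enlarges the canonical space and constructs the $N^j$ by identifying the martingale part of $X^j_t - \int_0^t \int_A b_j(s, X_s, m^x_s, a)\,\Lambda_s(da)\,ds$ (applying the martingale problem to a localization of the coordinate $\varphi(x) = x_j$); orthogonality and the intensity are then extracted by computing quadratic covariations through the martingale problem with $\varphi$ chosen as products of coordinates.

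For the Markovian part, the ``$\Leftarrow$'' implication follows from the first part specialized to $\Lambda_t = \delta_{\hat\alpha(t,X_t)}$. For ``$\Rightarrow$'', given a Markovian $m \in \RC$, the first part supplies orthogonal martingale measures $N^j$ with intensity $\Lambda_s(da)\,ds = \delta_{\hat\alpha(s,X_s)}(da)\,ds$. Setting $W^j_t := \int_0^t \int_A N^j(da,ds)$ and using orthogonality,
\[
\langle W^j, W^k\rangle_t = \delta_{jk}\int_0^t \int_A \Lambda_s(da)\,ds = \delta_{jk}\,t,
\]
so by L\'evy's characterization $W = (W^1,\ldots,W^{d_W})$ is a standard $d_W$-dimensional Wiener process. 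Since $\Lambda_s$ is a Dirac mass, the martingale-measure stochastic integral collapses to $\int_0^\cdot \sigma(s, X_s, m^x_s, \hat\alpha(s,X_s))\,dW_s$, yielding \eqref{def:markov-sde}.

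The principal technical obstacle lies in the forward direction of the first equivalence, namely constructing orthogonal martingale measures from the abstract martingale problem; this is the substance of \cite{elkarouimeleard-martingalemeasure} and typically requires working on an enlarged probability space and a careful identification of predictable brackets. In our setting this step is a verbatim adaptation because $m^x_t$ enters only as a deterministic input. All other parts are either It\^o calculus, L\'evy's characterization, or the trivial observation that a Dirac-valued $\Lambda$ reduces martingale-measure integrals to Brownian integrals.
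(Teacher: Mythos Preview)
Your proposal is correct and matches the paper's treatment: the paper gives no proof at all, simply attributing the result to Theorem IV-2 of \cite{elkarouimeleard-martingalemeasure}. Your explanation---that once $m$ is fixed the flow $t \mapsto m^x_t$ is deterministic, so the McKean-Vlasov martingale problem reduces to a standard controlled martingale problem to which the cited theorem applies verbatim---is precisely the implicit reasoning behind the citation, and your handling of the Markovian case via L\'evy's characterization is the standard and correct argument.
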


\begin{remark} \label{re:markovian}
A word of caution about the terminology: Under a Markovian control $m \in \RC$, it is not true in general that the state process $X$ is a Markov process. Only when the state equation \eqref{def:markov-sde} is sufficiently well-posed is $X$ truly a Markov process. For instance, letting $(\Omega,\F,\FF,\PP,X,\hat{\alpha})$ be as in the second part of Proposition \ref{pr:SDErepresentation}, define new coefficients $\hat{b}(t,x) = b(t,x,\PP \circ X_t^{-1},\hat{\alpha}(t,x))$ and $\hat{\sigma}(t,x) = \sigma(t,x,\PP \circ X_t^{-1},\hat{\alpha}(t,x))$. If the martingale problem associated to $(\hat{b},\hat{\sigma})$ is well-posed, then the unique in law weak solution $X$ is a Feller process \cite[Chapter 12]{stroockvaradhanbook}.
\end{remark}

\subsection{$n$-state control problems}

This section states the main results on how the McKean-Vlasov control problem arises from $n$-state control problems as $n \rightarrow \infty$.
Assume throughout this section that both assumptions \ref{assumption:A} and \ref{assumption:B} are in force.

We first describe a standard \emph{strong formulation} of the control problems.
Let $(\Omega,\F,\FF,\PP)$ be a filtered probability space supporting independent $d_W$-dimensional $\FF$-Wiener processes $W^1,\ldots,W^n$ as well as i.i.d. $\F_0$-measurable $\R^d$-valued random variables $\xi^1,\ldots,\xi^n$ with law $\lambda$. Assume the filtration $\FF$ is generated by these initial states and Wiener processes, i.e., the (right-continuous) completion of $(\sigma(\xi^1,\ldots,\xi^n,W^1_s,\ldots,W^n_s : s \le t))_{t \ge 0}$. An $\FF$-progressively measurable $A^n$-valued process $(\alpha^1,\ldots,\alpha^n)$ is called am \emph{admissible control} if
\[
\E\int_0^T|\alpha^k_t|^pdt < \infty, \ k=1,\ldots,n,
\]
and if there exists a unique square-integrable strong solution on $(\Omega,\F,\FF,\PP)$ of the SDE system
\begin{align*}
dX^i_t &= b(t,X^i_t,\widehat{\mu}^n_t,\alpha^i_t)dt + \sigma(t,X^i_t,\widehat{\mu}^n_t,\alpha^i_t)dW^i_t, \\ 
\widehat{\mu}^n_t &= \frac{1}{n}\sum_{k=1}^n\delta_{X^k_t}.
\end{align*}
The strong form of the $n$-state control problem is to maximize
\begin{align*}
\frac{1}{n}\sum_{i=1}^n\E\left[\int_0^Tf(t,X^i_t,\widehat{\mu}^n_t,\alpha^i_t)dt + g(X^i_T,\widehat{\mu}^n_T)\right]
\end{align*}
over all admissible controls.
Note that an admissible control induces a probability measure $\PP \circ ((X^i,dt\delta_{\alpha^i_t}(da))_{i=1}^n)^{-1}$ on $(\C^d\times\V)^n$. Let $\RC_n^s$ denote the set of such measures, and refer to an element of $\RC_n^s$ as a \emph{strong control}. As in the previous section, the definition of $\RC^s_n$ is insensitive to the choice of probability space $(\Omega,\F,\FF,\PP)$, provided that it satisfies the above requirements. Hence, we make no further reference to this particular $(\Omega,\F,\FF,\PP)$.

The relaxed form of the $n$-state control problem is defined by working with relaxed controls and weak solutions of the SDEs. Let $(X^i,\Lambda^i)_{i=1}^n$ denote the canonical process on $(\C^d \times \V)^n$. Define the empirical measures
\begin{align}
\widehat{\mu}^n &= \frac{1}{n}\sum_{k=1}^n\delta_{(X^k,\Lambda^k)}, \quad\quad\quad\quad
\widehat{\mu}^{n,x}_t = \frac{1}{n}\sum_{k=1}^n\delta_{X^k_t}, \ t \in [0,T].
\end{align}
Define $\RC_n$ as the set of laws $P \in \P((\C^d\times\V)^n)$ under which $(X^i_0)_{i=1}^n$ are i.i.d. with law $\lambda$, and 
\[
\varphi(X^1_t,\ldots,X^n_t) - \sum_{i=1}^n\int_0^t\int_A\L^n_i\varphi(s,X^1_s,\ldots,X^n_s,a)\Lambda^i_t(da)dt,
\]
is a martingale, where the generator $\L^n_i$ acts on smooth compactly supported functions $\varphi : (\R^d)^n \rightarrow \R$ by
\begin{align}
\L^n_i\varphi(t,x_1,\ldots,x_n,a) = \;&b\left(t,x_i,\frac{1}{n}\sum_{k=1}^n\delta_{x_k},a\right) \cdot \nabla_i\varphi(x_1,\ldots,x_n) \nonumber \\
	& + \frac{1}{2}\mathrm{Tr}\left[\sigma\sigma^\top\left(t,x_i,\frac{1}{n}\sum_{k=1}^n\delta_{x_k},a\right)\nabla_i^2\varphi(x_1,\ldots,x_n)\right], \label{def:generator-n}
\end{align}
where $\nabla_i$ and $\nabla_i^2$ denote the gradient and Hessian with respect to the $i^\text{th}$ variable. An element of $\RC_n$ is called a \emph{control}, or an \emph{(n-state) relaxed control} for emphasis. It is a straightforward consequence of It\^o's formula that $\RC^s_n \subset \RC_n$; that is, a strong control induces a relaxed control. 
We have the following analog of Proposition \ref{pr:SDErepresentation}, and again note that there is no need for martingale measures when $\sigma$ is uncontrolled:

\begin{proposition} [Theorem IV-2 of \cite{elkarouimeleard-martingalemeasure}] \label{pr:SDErepresentation-n}
The set $\RC_n$ equals the set of laws $\PP \circ ((X^i,\Lambda^i)_{i=1}^n)^{-1}$, where:
\begin{enumerate}
\item $(\Omega,\F,\FF,\PP)$ is a filtered probability space supporting $n$ adapted $d$-dimensional processes $X^1,\ldots,X^n$, $n$ predictable $\P^p(A)$-valued processes $\Lambda^1,\ldots,\Lambda^n$, and $nd_W$ orthogonal $\FF$-martingale measures $(N^{i,j})$, for $i=1,\ldots,n$ and $j=1,\ldots,d_W$, where $N^{i,j}$ has intensity $\Lambda^i_t(da)dt$ for each $j$.
\item $X^1_0,\ldots,X^n_0$ are i.i.d. with law $\lambda$.
\item $\E\left[\sup_{t \in [0,T]}|X^i_t|^p + \int_0^T\int_A|a|^p\Lambda^i_t(da)dt\right] < \infty$, for each $i=1,\ldots,n$.
\item The state equation holds, with $N^i=(N^{i,1},\ldots,N^{i,d_W})^\top$:
\begin{align*}
dX^i_t &= \int_Ab(t,X^i_t,\widehat{\mu}^{n,x}_t,a)\Lambda^i_t(da)dt + \int_A\sigma(t,X^i_t,\widehat{\mu}^{n,x}_t,a)N^i(da,dt), \\
\widehat{\mu}^{n,x}_t &= \frac{1}{n}\sum_{k=1}^n\delta_{X^k_t}.
\end{align*}
\end{enumerate}
\end{proposition}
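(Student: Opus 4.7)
The plan is to reduce Proposition~\ref{pr:SDErepresentation-n} to the single-particle Proposition~\ref{pr:SDErepresentation} (equivalently, Theorem IV-2 of \cite{elkarouimeleard-martingalemeasure}) by viewing the $n$-particle system as one $\R^{nd}$-valued process with block-diagonal dynamics. I would set $Y_t:=(X^1_t,\ldots,X^n_t)$ and $\vec\Lambda_t:=\Lambda^1_t\otimes\cdots\otimes\Lambda^n_t\in\P^p(A^n)$, and introduce the block-diagonal coefficients
\[
B(t,y,\mu,\vec a):=\bigl(b(t,y_i,\mu,a_i)\bigr)_{i=1}^n, \qquad \Sigma(t,y,\mu,\vec a):=\mathrm{diag}\bigl(\sigma(t,y_i,\mu,a_i)\bigr)_{i=1}^n.
\]
A short computation shows that the joint generator $B\cdot\nabla+\tfrac12\mathrm{Tr}[\Sigma\Sigma^\top\nabla^2]$, integrated against $\vec\Lambda_t$, equals $\sum_{i=1}^n\int_A\L^n_i\varphi(\cdot,a)\,\Lambda^i_t(da)$: the mixed Hessian terms $\nabla_i\nabla_j\varphi$ with $i\ne j$ vanish by block-diagonality of $\Sigma\Sigma^\top$, and each surviving summand depends on $\vec a$ only through the coordinate $a_i$, so the product form of $\vec\Lambda_t$ collapses each $A^n$-integral to an $A$-integral against $\Lambda^i_t$.

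The forward inclusion is routine. Starting from data $(X^i,\Lambda^i,N^i)_{i=1}^n$ meeting items~(1)--(4), It\^o's formula for stochastic integrals against orthogonal martingale measures applied to $\varphi(Y_t)$, together with $\langle N^{i,j},N^{k,\ell}\rangle\equiv 0$ for $(i,j)\ne(k,\ell)$, produces exactly the martingale required by the definition of $\RC_n$.

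For the converse, given $P\in\RC_n$, I would extract predictable versions of the $\Lambda^i$ as in Section~\ref{se:relaxed-canonical}, form $\vec\Lambda$, and verify that $(Y,\vec\Lambda)$ solves the martingale problem associated with $(B,\Sigma)$ on $\R^{nd}$ with initial law $\lambda^{\otimes n}$. Theorem IV-2 of \cite{elkarouimeleard-martingalemeasure}, after possibly enlarging the probability space, then furnishes a vector $\widetilde N=(\widetilde N^{i,j})_{i\le n,\,j\le d_W}$ of orthogonal martingale measures on $A^n\times[0,T]$, each with intensity $\vec\Lambda_t(d\vec a)dt$, for which
\[
dY_t = \int_{A^n} B(t,Y_t,\widehat\mu^{n,x}_t,\vec a)\,\vec\Lambda_t(d\vec a)\,dt + \int_{A^n}\Sigma(t,Y_t,\widehat\mu^{n,x}_t,\vec a)\,\widetilde N(d\vec a,dt).
\]

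The step I expect to be the main obstacle is passing from $\widetilde N$, which lives on $A^n\times[0,T]$, to the component-level measures $N^{i,j}$ on $A\times[0,T]$ required by the statement. The plan is to set $N^{i,j}(C\times[0,t]):=\widetilde N^{i,j}(\{\vec a\in A^n : a_i\in C\}\times[0,t])$, i.e., to push $\widetilde N^{i,j}$ forward under the $i$-th coordinate projection $A^n\to A$. The product form of $\vec\Lambda_t$ gives intensity $\Lambda^i_t(da)dt$ for $N^{i,j}$, and orthogonality of the family $\{N^{i,j}\}$ is inherited from that of $\{\widetilde N^{i,j}\}$. Finally, since the $i$-th block of $\Sigma$ depends on $\vec a$ only through $a_i$, the standard identity for integrating a pushed-forward integrand against a martingale measure collapses the $i$-th block of the joint SDE above into the particle-level equation of item~(4), completing the proof.
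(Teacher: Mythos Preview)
The paper does not give a proof of Proposition~\ref{pr:SDErepresentation-n}; it simply records the statement as an instance of Theorem~IV-2 of \cite{elkarouimeleard-martingalemeasure}, just as it does for the single-particle Proposition~\ref{pr:SDErepresentation}. Your plan is a correct and natural elaboration of that citation: packaging $(X^1,\ldots,X^n)$ as a single $\R^{nd}$-valued process with block-diagonal $(B,\Sigma)$ and product control $\vec\Lambda_t=\Lambda^1_t\otimes\cdots\otimes\Lambda^n_t$ is precisely how one verifies that the $n$-state martingale problem fits the hypotheses of the cited theorem, and your pushforward of $\widetilde N^{i,j}$ along the $i$-th coordinate projection (using that the $i$-th block of $\Sigma$ depends on $\vec a$ only through $a_i$, and that the $i$-th marginal of $\vec\Lambda_t$ is $\Lambda^i_t$) correctly recovers the particle-level martingale measures with the stated intensities and orthogonality. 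The filtration issue is harmless, since $\sigma(\vec\Lambda_s:s\le t)=\sigma(\Lambda^1_s,\ldots,\Lambda^n_s:s\le t)$ by marginalizing and taking products, so the two canonical filtrations coincide.
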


\begin{remark} \label{re:rcn-nonempty}
Under assumption \ref{assumption:A}, the set $\RC_n$ is nonempty for each $n$. In particular, for any fixed constant control $(a^1_0,\ldots,a^n_0) \in A^n$, there exists $P \in \RC_n$ such that, almost surely under $P_n$, $\Lambda^k_t = \delta_{a^k_0}$ for a.e. $t \in [0,T]$ and for each $k=1,\ldots,n$. Indeed, this follows essentially from the classical existence results for martingale problems of Stroock and Varadhan \cite{stroockvaradhanbook}.
\end{remark}

Recalling the definition of $\Gamma$ from \eqref{def:Gamma}, the reward of $P \in \RC_n$ is given by
\[
\E^P[\Gamma(\widehat{\mu}^n)] = \frac{1}{n}\sum_{i=1}^n\E\left[\int_0^T\int_Af(t,X^i_t,\widehat{\mu}^{n,x}_t,a)\Lambda^i_t(da)dt + g(X^i_T,\widehat{\mu}^{n,x}_T)\right].
\]
Note that $\E^P[\Gamma(\widehat{\mu}^n)]$ is well-defined in $[-\infty,\infty)$ for $P \in \P^p((\C^d\times\V)^n)$, thanks to assumption (A.5).
Given $\epsilon \ge 0$, we say $P \in \RC_n$ is a \emph{(relaxed) $n$-state $\epsilon$-optimal control} if
\[
\E^P[\Gamma(\widehat{\mu}^n)] \ge \sup_{Q \in \RC_n}\E^Q[\Gamma(\widehat{\mu}^n)] - \epsilon.
\]
If $\epsilon=0$, we simply say $P$ is a \emph{(relaxed) $n$-state optimal control}.
The following result, at the present level of generality, is due to Haussmann and Lepeltier:

\begin{theorem}[Theorem 4.7 of \cite{haussmannlepeltier-existence}] \label{th:n-player-existence}
Under assumption \ref{assumption:A}, for each $n$, there exists an optimal $n$-state relaxed control.
\end{theorem}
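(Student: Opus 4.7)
The $n$-state problem is itself a standard stochastic control problem on the state space $(\R^d)^n$ with control space $A^n$, obtained by packaging the coefficients as
\[
B_i(t,\mathbf{x},\mathbf{a}) = b\bigl(t,x_i,\tfrac{1}{n}\textstyle\sum_k\delta_{x_k},a_i\bigr), \quad \Sigma_i(t,\mathbf{x},\mathbf{a}) = \sigma\bigl(t,x_i,\tfrac{1}{n}\sum_k\delta_{x_k},a_i\bigr),
\]
and similarly averaging the running and terminal rewards. Continuity of the map $\mathbf{x}\mapsto \tfrac{1}{n}\sum_k\delta_{x_k}\in \P^p(\R^d)$ ensures assumption \ref{assumption:A} transfers to the composite formulation, including the crucial coercivity of the form $F(t,\mathbf{x},\mathbf{a}) \le c_2(1+|\mathbf{x}|^p)-\tfrac{c_3}{n}|\mathbf{a}|^{p'}$. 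The plan is therefore to run the classical compactification argument of \cite{haussmannlepeltier-existence} directly within our canonical framework, with no new ingredients beyond what is already needed for Theorem \ref{th:existence}.

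Concretely, pick a maximizing sequence $P_k\in\RC_n$, using Remark \ref{re:rcn-nonempty} to ensure that $\sup_k \E^{P_k}[\Gamma(\widehat{\mu}^n)]>-\infty$. The coercivity term $-c_3|a|^{p'}$ in (A.5), combined with the polynomial upper bounds on $f$ and $g$ and the moment estimates of Section \ref{se:estimates} derived from (A.4), yields a uniform bound $\sup_k\E^{P_k}\bigl[\sum_i\int_0^T\int_A|a|^{p'}\Lambda^i_t(da)\,dt\bigr]<\infty$. Feeding this back into (A.4) and applying BDG to the state equation of Proposition \ref{pr:SDErepresentation-n} delivers uniform path moments $\sup_k\E^{P_k}[\sup_{t\le T}|X^i_t|^{p'}]<\infty$ together with the Kolmogorov-type increment bounds needed for path tightness. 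Since $p'>p$, these bounds imply tightness of $\{P_k\}$ in $\P^p((\C^d\times\V)^n)$: tightness of the $X^i$ in $\C^d$ follows from Kolmogorov--Chentsov, while tightness of the $\Lambda^i$ in $\P^p(A)$ follows from a de la Vallée--Poussin-type argument applied to $\int_A|a|^p\Lambda^i_t(da)\,dt$.

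Extract a subsequential weak limit $P^*\in\P^p((\C^d\times\V)^n)$. To verify $P^*\in\RC_n$, pass the controlled martingale problem for $\L^n_i$ to the limit: joint continuity of $b,\sigma$ (A.3), the polynomial growth (A.4), and convergence in $\P^p$ together yield the convergence of $\int_0^t\int_A\L^n_i\varphi(s,X^1_s,\ldots,X^n_s,a)\Lambda^i_s(da)\,ds$ tested against bounded continuous cylindrical functionals of the past. Finally, the inequality $\E^{P^*}[\Gamma(\widehat{\mu}^n)]\ge\limsup_k\E^{P_k}[\Gamma(\widehat{\mu}^n)]$ follows from a Fatou-type argument for upper semicontinuous functionals, with the two-sided bounds in (A.5) providing the requisite uniform integrability. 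The main obstacle, and the reason the $p$-Wasserstein topology on both $\V$ and the path space is essential, is precisely this last step: the upper bounds on $f$ and $g$ grow like $|x|^p$ and $|a|^p$, and it is the uniform $p'$-moment reserve (with $p'>p$) that upgrades weak convergence to the $\P^p$ convergence under which polynomially growing upper semicontinuous functionals satisfy the right Fatou inequality.
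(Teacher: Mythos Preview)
The paper does not supply a proof of this theorem: it is stated as a direct citation of Theorem 4.7 in Haussmann--Lepeltier \cite{haussmannlepeltier-existence}, with no argument given. Your proposal is a correct outline of that classical compactification argument, and it parallels almost exactly the proof the paper \emph{does} give for the McKean--Vlasov analogue (Theorem \ref{th:existence}): the moment bound you derive is Lemma \ref{le:estimate-optimal-n}, the path and control tightness is packaged in Proposition \ref{pr:itocompactness} (the paper uses Aldous' criterion rather than Kolmogorov--Chentsov, but either works), the closure of $\RC_n$ under $\P^p$-limits follows the same martingale-problem argument as Lemma \ref{le:rceps-compact}, and the upper semicontinuity of $P\mapsto \E^P[\Gamma(\widehat{\mu}^n)]$ is the $n$-state counterpart of Lemma \ref{le:gamma-usc}. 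So there is nothing to compare: your sketch is essentially what Haussmann--Lepeltier do, specialized to the present setting, and the paper is content to invoke their result.
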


In many cases, the optimal value of the relaxed control problem is the same as that of the strong formulation. Theorem \ref{th:strongequalsweak-n} below is a step in this direction, analogous to Theorem \ref{th:strongequalsweak-MF}. It is nearly a special case of the results of \cite[Section 4]{elkaroui-compactification} when $(b,\sigma,f,g)$ are bounded.

\begin{theorem} \label{th:strongequalsweak-n}
Suppose assumptions \ref{assumption:A} and \ref{assumption:B} hold with $p'=2$. Assume the initial condition $\lambda$ belongs to $\P^{p''}(\R^d)$ for some $p'' > 2$. Then, for every relaxed control $P \in \RC_n$, there exists a sequence of strong controls $P_n \in \RC_n^s$ with $P_n \rightarrow P$ in $\P^p((\C^d\times\V)^n)$ and $\E^{P_n}[\Gamma(\hat{\mu}^n)] \rightarrow \E^{P}[\Gamma(\hat{\mu}^n)]$.
In particular, the optimal value of the $n$-state control problem is the same for both the relaxed and strong formulations; that is,
\[
\sup_{P \in \RC_n}\E^P[\Gamma(\widehat{\mu}^n)] = \sup_{P \in \RC^s_n}\E^P[\Gamma(\widehat{\mu}^n)].
\]
\end{theorem}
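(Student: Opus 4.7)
The plan is to follow the approach used for Theorem \ref{th:strongequalsweak-MF}, simplified by the fact that $n$ is fixed and the Lipschitz assumption \ref{assumption:B} makes the joint $n$-particle SDE strongly well-posed once the controls are specified. The equality of optimal values in the second display follows at once from the first part (approximation of $P\in\RC_n$ by $P_k\in\RC_n^s$) combined with the trivial inclusion $\RC_n^s\subset\RC_n$, so the real work lies in the approximation claim.

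First I would invoke Proposition \ref{pr:SDErepresentation-n} to realize the given $P$ as the law of $(X^i,\Lambda^i)_{i=1}^n$ on a filtered probability space supporting orthogonal $\FF$-martingale measures $N^{i,j}$ with intensities $\Lambda^i_t(da)\,dt$ and satisfying the state equations. Next, after enlarging the space to accommodate fresh independent randomness, I would apply M\'el\'eard's martingale-measure representation from \cite{meleard1992martingale} to construct $n$ independent standard $d_W$-dimensional Brownian motions $W^1,\ldots,W^n$ (independent of $(\xi^i)$) and to approximate in $L^2$ the stochastic integrals
\[
\int_A\sigma(t,X^i_t,\widehat{\mu}^{n,x}_t,a)\,N^i(da,dt)
\]
by It\^o integrals against $W^i$ whose integrands are built measurably from $\Lambda^i$. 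In parallel, a classical chattering lemma yields $A$-valued strict controls $\alpha^{i,k}$ adapted to the filtration generated by $(\xi^j,W^j)_{j=1}^n$ with $dt\,\delta_{\alpha^{i,k}_t}(da)\to\Lambda^i$ in $\V$ and $\sup_k\E\int_0^T|\alpha^{i,k}_t|^{p'}\,dt<\infty$; the moment bound is delivered by the $p'$-coercivity of $f$ in (A.5), which controls the high moments of $\Lambda^i$ via the very definition of $\RC_n$.

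For each $k$, assumption \ref{assumption:B} gives a unique strong solution $(X^{i,k})_{i=1}^n$ of the $n$-particle system driven by the $W^i$ with controls $\alpha^{i,k}$, defining a strong control $P_k\in\RC_n^s$. Standard Gronwall/Lipschitz estimates for particle systems with empirical-measure dependence (in the spirit of Sznitman \cite{sznitman}) propagate the $L^2$-approximation from the M\'el\'eard step and the $\V$-convergence of the controls into $\sup_i\E\|X^{i,k}-X^i\|^p\to 0$, which together with the control convergence yields $P_k\to P$ in $\P^p((\C^d\times\V)^n)$. Continuity of $f$ and $g$ from assumption \ref{assumption:B} then gives $\E^{P_k}[\Gamma(\widehat{\mu}^n)]\to\E^P[\Gamma(\widehat{\mu}^n)]$ once the polynomial-growth integrands in (A.5) are uniformly integrable under the converging laws; this is exactly where the stronger moment hypothesis $p''>p'=2$ on $\lambda$ is used, since it provides the strict moment headroom needed to upgrade tightness in $\P^p$ to uniform integrability of $p'$-growth functionals of the states.

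The main technical obstacle is the joint step of representing the martingale-measure integrals by It\^o integrals against standard Brownian motions while \emph{simultaneously} producing strict controls adapted to the resulting Brownian filtration, uniformly over $i$. This is precisely what M\'el\'eard's theorem \cite{meleard1992martingale} is designed for, and it mirrors the key step in the proof of Theorem \ref{th:strongequalsweak-MF}; the comment following the theorem statement suggests that alternative arguments using weak uniqueness via \cite{elkaroui-compactification,karouitan-capacities} are also possible but less quantitative.
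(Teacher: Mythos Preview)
Your outline mirrors the paper's strategy (which the paper explicitly says is ``nearly identical up to notational changes'' to the proof of Theorem \ref{th:strongequalsweak-MF}), but there is a genuine gap in how you pass from relaxed to \emph{strong} controls.

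The paper's argument proceeds in three distinct steps: (1) truncate $\Lambda^i$ to bounded controls and use Lemma \ref{le:meleard} to build approximating martingale measures; (2) apply the chattering lemma and M\'el\'eard's construction to pass to \emph{strict} controls $\alpha^i$ and a single family of Wiener processes $W^i$; (3) approximate these strict controls---which are adapted to the \emph{full} filtration $\FF$, not to $\sigma(\xi^j,W^j_s:s\le t)$---by genuinely strong controls, using only a \emph{weak} approximation (joint law of $(\alpha,W)$) together with uniqueness in law of the Lipschitz SDE. You have collapsed (2) and (3) into a single sentence, claiming that ``a classical chattering lemma yields $A$-valued strict controls $\alpha^{i,k}$ adapted to the filtration generated by $(\xi^j,W^j)_{j=1}^n$.'' This is incorrect: chattering only produces strict controls adapted to whatever filtration $\Lambda^i$ was adapted to, and in the relaxed formulation that filtration is strictly larger than the Brownian one. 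The entire content of Step 3 in the paper---invoking \cite[Lemma 3.11]{carmonadelaruelacker-mfgcommonnoise} to get $\PP\circ(dt\,\delta_{\alpha^{n}_t},W)^{-1}\to\PP\circ(dt\,\delta_{\alpha_t},W)^{-1}$, then identifying the limit via tightness and pathwise uniqueness---is missing from your proposal. Correspondingly, your assertion that ``standard Gronwall/Lipschitz estimates \ldots\ propagate the $L^2$-approximation \ldots\ into $\sup_i\E\|X^{i,k}-X^i\|^p\to 0$'' cannot work for this step: the strong controls converge only in law jointly with $W$, not in $L^2$, so no Gronwall argument applies.

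A second, smaller issue: you write that ``the moment bound is delivered by the $p'$-coercivity of $f$ in (A.5), which controls the high moments of $\Lambda^i$ via the very definition of $\RC_n$.'' The definition of $\RC_n$ gives only $p$th moments of the control; the $p'$-moment bound in Lemma \ref{le:estimate-optimal-n} comes from \emph{near-optimality}, which is not assumed here. This is exactly why the paper's Step 1 first truncates the controls to be bounded before proceeding---a step you omit.
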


\subsection{The main limit theorems}
Now that we understand the structure of the $n$-state and McKean-Vlasov control problems, we are ready to state the main results of the paper. Refer to Section \ref{se:relaxed-canonical} for a discussion of convergence in the space $\P^p(\P^p(\C^d\times\V))$. Recall that $\RC^* \subset \P^p(\C^d\times\V)$ denotes the set of (relaxed) optimal McKean-Vlasov controls.

\begin{theorem} \label{th:main-limit}
Suppose assumptions \ref{assumption:A} and \ref{assumption:B} hold.
For each $n$, let $P_n \in \RC_n$ be a relaxed $n$-state $\epsilon_n$-optimal control, for some sequence $\epsilon_n \rightarrow 0$. Then $(P_n \circ (\widehat{\mu}^n)^{-1})$ is precompact in $\P^p(\P^p(\C^d \times \V))$, and every limit is supported on $\RC^*$.
In particular, 
\begin{align}
\lim_{n\rightarrow\infty}P_n\left(\ell_{\C^d\times\V,p}(\widehat{\mu}^n,\RC^*) \ge \epsilon\right) = 0, \text{ for all } \epsilon > 0. \label{def:distance-limit}
\end{align}
\end{theorem}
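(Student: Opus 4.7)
The plan has three main steps: precompactness, identification of limits as McKean-Vlasov controls, and identification of limits as \emph{optimal} McKean-Vlasov controls.

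\textbf{Step 1 (Precompactness).} The first order of business is to show that $(P_n \circ (\widehat{\mu}^n)^{-1})$ is tight in $\P^p(\P^p(\C^d\times\V))$. I will use coercivity. By Remark \ref{re:rcn-nonempty} we may compare the $\epsilon_n$-optimal $P_n$ to a fixed constant control, yielding $\E^{P_n}[\Gamma(\widehat{\mu}^n)] \ge c_0$ for some constant $c_0$ independent of $n$. Combining this with the upper bounds on $f$ and $g$ in (A.5) and the $-c_3|a|^{p'}$ term gives a uniform bound
\[
\sup_n \E^{P_n}\left[\frac{1}{n}\sum_{i=1}^n\int_0^T\int_A |a|^{p'}\Lambda^i_t(da)\,dt\right] < \infty.
\]
(I may symmetrize $P_n$ first; the reward is invariant under permutations of the labels.) Feeding this into the SDE representation of Proposition \ref{pr:SDErepresentation-n} and applying Burkholder-Davis-Gundy and Gronwall with (A.4) and $\lambda \in \P^{p'}(\R^d)$ yields a matching bound on $\E^{P_n}[\frac{1}{n}\sum_i \|X^i\|^{p'}]$. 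Since $p' > p$ and $p' > p_\sigma$, these $p'$-moments give the uniform integrability required for tightness in the $p$-Wasserstein topology on $\P^p(\C^d\times\V)$, and this in turn (via a standard extension to the second-level space) yields precompactness in $\P^p(\P^p(\C^d\times\V))$.

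\textbf{Step 2 (Limits lie in $\RC$).} Fix a subsequential limit $Q$ of $P_n \circ (\widehat{\mu}^n)^{-1}$. I want to show that for $Q$-a.e.\ $m$, $m \in \RC$, i.e.\ $m^x_0=\lambda$ and, for every smooth compactly supported $\varphi$, the process $\varphi(X_t) - \int_0^t\int_A \L\varphi(s,X_s,m^x_s,a)\Lambda_s(da)\,ds$ is an $m$-martingale. The initial-law condition passes at once from the i.i.d.\ initial data. For the martingale property, apply the $n$-state martingale condition of $\RC_n$ symmetrically: for $0 \le s < t \le T$ and $F$ bounded continuous on $\C^d\times\V$ measurable through time $s$,
\[
\E^{P_n}\left[\int F(x,q)\,\bigl(M^\varphi_t(x,q,\widehat{\mu}^{n,x}) - M^\varphi_s(x,q,\widehat{\mu}^{n,x})\bigr)\,\widehat{\mu}^n(dx,dq)\right] = 0,
\]
where $M^\varphi_t(x,q,\nu) := \varphi(x_t) - \int_0^t\int_A \L\varphi(r,x_r,\nu_r,a)q_r(da)\,dr$. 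The integrand is a continuous functional of $(x,q,\widehat{\mu}^n)$ on $\C^d\times\V\times\P^p(\C^d\times\V)$ by (A.3), with $p$-polynomial growth controlled by (A.4) and (A.2). Using the uniform $p'$-moment bounds from Step 1 to get uniform integrability, I may pass to the limit along the subsequence to conclude that the corresponding integral against $Q$ vanishes, i.e.\ the martingale condition holds $Q$-almost surely in $m$. This will be the main technical step; the mean-field term introduces a nonlinearity but is handled precisely because Assumption \ref{assumption:B} makes the coefficients continuous in $m$ in the $p$-Wasserstein sense already captured by our topology.

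\textbf{Step 3 (Limits are optimal).} The remaining task is to show that $Q$-a.s.\ $m \in \RC^*$. One direction follows from upper semicontinuity of $\Gamma$ combined with the uniform integrability of Step 1: $\limsup_n \E^{P_n}[\Gamma(\widehat{\mu}^n)] \le \E^Q[\Gamma(m)]$. For the matching lower bound, by Theorem \ref{th:existence} fix any $m^* \in \RC^*$ and build approximating $n$-state controls from it. Using Proposition \ref{pr:SDErepresentation}, I can represent $m^*$ via $(X^*,\Lambda^*,N^*)$ on some probability space and construct $n$ i.i.d.\ copies $(X^{*,i},\Lambda^{*,i},N^{*,i})$; replacing the measure flow $\PP\circ X^{*,1,-1}_t$ in the state equation by the empirical $\widehat{\mu}^{n,x}_t$ yields an $n$-state relaxed control $P^*_n \in \RC_n$, and Assumption \ref{assumption:B} (Lipschitz in $(x,m)$) together with standard propagation-of-chaos estimates yields $\widehat{\mu}^n \to m^*$ in $\P^p(\C^d\times\V)$ in $P^*_n$-probability and $\E^{P^*_n}[\Gamma(\widehat{\mu}^n)] \to \Gamma(m^*)=\sup_{\RC}\Gamma$. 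Combining with $\epsilon_n$-optimality,
\[
\E^Q[\Gamma(m)] \ge \limsup_n \E^{P_n}[\Gamma(\widehat{\mu}^n)] \ge \lim_n \E^{P^*_n}[\Gamma(\widehat{\mu}^n)] - \epsilon_n = \sup_{\RC}\Gamma.
\]
Since Step 2 gives $\Gamma(m) \le \sup_{\RC}\Gamma$ for $Q$-a.e.\ $m$, we conclude $\Gamma(m) = \sup_{\RC}\Gamma$ $Q$-a.s., i.e.\ $m \in \RC^*$.

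\textbf{Step 4 (Conclusion \eqref{def:distance-limit}).} Under Assumption \ref{assumption:B} the functional $\Gamma$ is continuous with the appropriate growth, so $\RC^*$ is closed in $\P^p(\C^d\times\V)$. Precompactness plus the fact that every limit of $(P_n \circ (\widehat{\mu}^n)^{-1})$ concentrates on $\RC^*$ then gives $P_n(\ell_{\C^d\times\V,p}(\widehat{\mu}^n,\RC^*) \ge \epsilon) \to 0$ by a standard sub-subsequence argument.

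The most delicate step will be Step 2, where the coefficient $b(t,X_t,\widehat{\mu}^{n,x}_t,a)$ couples the individual and empirical trajectories nonlinearly and the martingale identity must be passed to the limit against test functionals built out of the empirical measure itself; the uniform $p'$-integrability from Step 1 and the continuity provided by (A.3) are precisely what make this go through. Step 3's construction is conceptually standard but needs the Lipschitz input of Assumption \ref{assumption:B} to realize $m^*$ as a limit of bona fide $n$-state controls without invoking Assumption \ref{assumption:C}.
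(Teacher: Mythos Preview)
Your overall strategy matches the paper's: Step 1 is Lemmas \ref{le:estimate-n}--\ref{le:estimate-optimal-n} together with the precompactness half of Proposition \ref{pr:tight-limits}; Step 3 is Proposition \ref{pr:propagation} combined with Lemma \ref{le:gamma-usc}; Step 4 is the Portmanteau argument using compactness of $\RC^*$ (Lemma \ref{le:rceps-compact}).

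However, Step 2 has a real gap. Your displayed identity
\[
\E^{P_n}\left[\int F(x,q)\,\bigl(M^\varphi_t - M^\varphi_s\bigr)\,\widehat{\mu}^n(dx,dq)\right] = 0
\]
is linear in the test data, and passing to the limit gives only
\[
\E^{Q}\!\left[\langle \mu,\,F(M^{\mu,\varphi}_t - M^{\mu,\varphi}_s)\rangle\right] = 0.
\]
Vanishing \emph{expectation} under $Q$ does not imply that $\langle \mu, F(M^{\mu,\varphi}_t - M^{\mu,\varphi}_s)\rangle = 0$ holds $Q$-almost surely, and that is exactly what ``$m \in \RC$ for $Q$-a.e.\ $m$'' requires. (A mixture $Q = \tfrac12\delta_{m_1} + \tfrac12\delta_{m_2}$ with cancelling but nonzero martingale defects is not ruled out by your identity.) The paper resolves this by squaring: one shows
\[
\E^{P_n}\!\left[\langle \widehat{\mu}^n,\,F(M^{\widehat{\mu}^n,\varphi}_t - M^{\widehat{\mu}^n,\varphi}_s)\rangle^2\right] \;\le\; \frac{C}{n},
\]
which goes to zero because the individual martingales $M^{\widehat{\mu}^n,\varphi}_t(X^k,\Lambda^k)$, $k=1,\ldots,n$, are \emph{orthogonal} (the driving martingale measures $N^k$ are orthogonal), so the cross terms vanish and only the diagonal survives. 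By lower semicontinuity of the nonnegative functional $m \mapsto \langle m, F(M^{m,\varphi}_t - M^{m,\varphi}_s)\rangle^2$, this forces the limit to be zero $Q$-a.s. This squaring-plus-orthogonality step is the crux of the martingale approach to McKean-Vlasov limits and is precisely what is missing from your Step 2.
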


\begin{theorem} \label{th:converse-limit}
Suppose assumptions \ref{assumption:A} and \ref{assumption:B} hold.
Let $P \in \P(\P^p(\C^d \times \V))$ be supported on $\RC^*$. Then there exist $\epsilon_n \rightarrow 0$ and a sequence of relaxed $n$-state $\epsilon_n$-optimal controls $P_n \in \RC_n$ such that $P_n \circ (\widehat{\mu}^n)^{-1} \rightarrow P$. Moreover, if $p'=2$ and if $\lambda \in \P^{p''}(\R^d)$ for some $p'' >2$, then the controls $P_n$ can be taken to be strong.
\end{theorem}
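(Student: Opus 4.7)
The plan is to construct $P_n$ by a mixture: first I would build the desired sequence when $P = \delta_m$ for a single $m \in \RC^*$, then integrate the construction against $P$, using that $\RC_n$ is convex (the defining martingale property and the requirement that $(X^i_0)$ be i.i.d.\ $\lambda$ are both linear in the measure). For the Dirac case, I would invoke Proposition \ref{pr:SDErepresentation} to realize $m = \PP \circ (X,\Lambda)^{-1}$ on some filtered probability space carrying orthogonal martingale measures $N=(N^1,\ldots,N^{d_W})$ of intensity $\Lambda_t(da)\,dt$. Passing to a product space gives $n$ i.i.d.\ copies $(X^i,\Lambda^i,N^i)_{i=1}^n$, each satisfying the McKean-Vlasov SDE with time-marginals $m^x_t$.

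Next I would define $\tilde{X}^i$ as the unique strong solution of the $n$-state SDE
\[
d\tilde{X}^i_t = \int_A b(t,\tilde{X}^i_t,\tilde{\mu}^{n,x}_t,a)\Lambda^i_t(da)\,dt + \int_A \sigma(t,\tilde{X}^i_t,\tilde{\mu}^{n,x}_t,a)\,N^i(da,dt), \quad \tilde{X}^i_0 = X^i_0,
\]
with $\tilde{\mu}^{n,x}_t = \frac{1}{n}\sum_k \delta_{\tilde{X}^k_t}$; existence and uniqueness follow by standard Picard iteration under the Lipschitz bound in assumption \ref{assumption:B}. Proposition \ref{pr:SDErepresentation-n} then tells me $P_n^m := \PP \circ ((\tilde{X}^i,\Lambda^i)_{i=1}^n)^{-1}$ lies in $\RC_n$, and for general $P$ I set $P_n := \int_{\RC^*} P_n^m\,P(dm)$.

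The heart of the argument is a propagation-of-chaos estimate $\E \max_i \|\tilde{X}^i - X^i\|^p \to 0$, which I would obtain by Gronwall combining the Lipschitz control from assumption \ref{assumption:B}, the Burkholder-Davis-Gundy inequality for the martingale-measure increments, and the classical Wasserstein law of large numbers $\E \ell_{\R^d,p}(\tilde{\mu}^{n,x}_t, m^x_t)^p \to 0$ (justified by the a priori $p$-th moment bounds from Section \ref{se:estimates}). This delivers $\widehat{\mu}^n \to m$ in probability in $\P^p(\C^d\times\V)$ in the Dirac case, hence $P_n^m \circ (\widehat{\mu}^n)^{-1} \to \delta_m$ in $\P^p(\P^p(\C^d\times\V))$; uniform integrability supplied by the moment bounds upgrades weak convergence to $\P^p$ convergence, and dominated convergence propagates the pointwise limit to $P_n \circ (\widehat{\mu}^n)^{-1} \to P$ in the mixed case.

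For $\epsilon_n$-optimality, assumption \ref{assumption:B} makes the functional $\Gamma$ continuous on $\P^p(\C^d\times\V)$ subject to the moment control in (A.5), so $\E^{P_n}[\Gamma(\widehat{\mu}^n)] \to \int \Gamma(m)\,P(dm) = V$, the McKean-Vlasov optimal value (since $P$ concentrates on $\RC^*$). Theorem \ref{th:main-limit} applied to truly optimal $n$-state controls gives $\limsup_n V_n \le V$, while $V_n \ge \E^{P_n}[\Gamma(\widehat{\mu}^n)]$ yields the matching lower bound, so $V_n \to V$ and $\epsilon_n := V_n - \E^{P_n}[\Gamma(\widehat{\mu}^n)] \to 0$. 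The strong-control statement under $p'=2$ and $\lambda \in \P^{p''}(\R^d)$ then follows by a diagonal argument against the strong approximants supplied by Theorem \ref{th:strongequalsweak-n}. I expect the main obstacle to be the Wasserstein-$p$ propagation-of-chaos bound for SDEs driven by martingale measures whose intensities $\Lambda^i_t(da)\,dt$ are themselves random; this is nonstandard but tractable once the uniform $L^{p'}$ moment estimates of Section \ref{se:estimates} are in hand, since the margin $p' > p$ is precisely what is needed to convert weak convergence to $\P^p$ convergence.
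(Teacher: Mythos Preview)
Your construction for the Dirac case $P=\delta_m$ is precisely Proposition~\ref{pr:propagation}, and your $\epsilon_n$-optimality argument (compare $\E^{P_n}[\Gamma(\widehat\mu^n)]$ with the true $n$-state optimum via Theorem~\ref{th:main-limit}) as well as your appeal to Theorem~\ref{th:strongequalsweak-n} for the strong-control claim both match the paper. The divergence is in the passage from $\delta_m$ to general $P$: you write $P_n=\int_{\RC^*}P_n^m\,P(dm)$ and invoke dominated convergence, whereas the paper defines $L$ as the set of all attainable limits, shows $L$ is convex (using finite convex combinations in $\RC_n$ and affinity of $P\mapsto\E^P[\Gamma(\widehat\mu^n)]$), closed (a short topological argument), and contains each $\delta_m$ (via Proposition~\ref{pr:propagation}), and then concludes $\P(\RC^*)\subset L$ by Krein--Milman, since $\RC^*$ is compact.

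Your route has a gap you have not addressed: the mixture $\int_{\RC^*}P_n^m\,P(dm)$ requires $m\mapsto P_n^m$ to be Borel measurable from $\RC^*$ to $\P((\C^d\times\V)^n)$. The construction of $P_n^m$ passes through a filtered probability space and martingale measures $N^i$ furnished by Proposition~\ref{pr:SDErepresentation}, and that extension is not canonical; arranging for the resulting law to depend measurably on $m$ is a genuine piece of work (one would need a fixed product space carrying a canonical martingale-measure construction parametrized by $\Lambda$, and then verify that the SDE solution map is jointly measurable in the inputs). The paper's Krein--Milman route sidesteps this entirely: convexity of $L$ only needs \emph{finite} mixtures, closedness is purely topological, and no measurable selection enters. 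A smaller point: your claimed estimate $\E\max_i\|\tilde X^i-X^i\|^p\to 0$ is stronger than what propagation of chaos delivers and likely false without a rate (the max over $n$ exchangeable terms can grow like $n$ times the individual expectation); Proposition~\ref{pr:propagation} proves only $\E\|\tilde X^1-X^1\|^{p'}\to 0$, which already suffices for the empirical-measure convergence via the obvious coupling.
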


If it happens that the optimal McKean-Vlasov control is unique, then an immediate corollary of Theorem \ref{th:main-limit} is a proper convergence theorem, stating that (the empirical measure sequence of) any $n$-state near-optimal $n$-state controls converge in probability to the unique optimal McKean-Vlasov control. An immediate corollary of Theorems \ref{th:main-limit} and \ref{th:markovian} is the following statement, for which $C([0,T];\P^p(\R^d))$ is endowed with the metric $(\mu,\nu) \mapsto \sup_{t \in [0,T]}\ell_{\R^d,p}(\mu_t,\nu_t)$.

\begin{corollary}
Suppose assumptions \ref{assumption:A}, \ref{assumption:B}, and \ref{assumption:C} hold.
For each $n$, let $P_n$ denote a weak $n$-state $\epsilon_n$-optimal control, for some sequence $\epsilon_n \rightarrow 0$. Then $(P_n \circ (\widehat{\mu}^{n,x}_t)_{t \in [0,T]}^{-1})$ is precompact in $\P^p(C([0,T];\P^p(\R^d)))$, and every weak limit is supported on the set $\{(m^x_t)_{t \in [0,T]} : m \in \RC^* \text{ is Markovian}\}$.
\end{corollary}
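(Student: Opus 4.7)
The strategy is a continuous-mapping argument on top of Theorems \ref{th:main-limit} and \ref{th:markovian}. Since assumptions \ref{assumption:A} and \ref{assumption:B} are in force, Theorem \ref{th:main-limit} already supplies precompactness of $(P_n\circ(\widehat\mu^n)^{-1})$ in $\P^p(\P^p(\C^d\times\V))$ with every limit supported on $\RC^*$. I would then push this conclusion forward through the evaluation map
\[
\Phi : \P^p(\C^d\times\V) \to C([0,T];\P^p(\R^d)), \quad \Phi(m) := (m^x_t)_{t \in [0,T]},
\]
and separately identify $\Phi(\RC^*)$ with the target set on which the limit is claimed to be supported.

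First I would verify that $\Phi$ is well-defined and $1$-Lipschitz from $\ell_{\C^d\times\V,p}$ to $\sup_t\ell_{\R^d,p}$. For well-definedness (i.e., continuity of $t\mapsto m^x_t$), take $X$ with law $m^x \in \P^p(\C^d)$; for $t_n \to t$, path continuity gives $X_{t_n}\to X_t$ almost surely, and $|X_{t_n}-X_t|^p \le (2\|X\|)^p$ is integrable, so dominated convergence yields $\ell_{\R^d,p}(m^x_{t_n},m^x_t)^p \le \E|X_{t_n}-X_t|^p \to 0$. For the Lipschitz estimate, any coupling $\pi$ of $m^x$ and $m'^x$ pushes forward under $(x,x')\mapsto(x_t,x'_t)$ to a coupling of the time-$t$ marginals, giving
\[
\sup_{t \in [0,T]}\ell_{\R^d,p}(m^x_t,m'^x_t)^p \le \int\|x-x'\|^p\,\pi(dx,dx');
\]
infimizing over $\pi$ and using $\ell_{\C^d,p}(m^x,m'^x) \le \ell_{\C^d\times\V,p}(m,m')$ gives the claim. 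A $1$-Lipschitz map between Polish metric spaces induces a $1$-Lipschitz map on the corresponding $p$-Wasserstein spaces, so $\Phi_*$ is continuous from $\P^p(\P^p(\C^d\times\V))$ into $\P^p(C([0,T];\P^p(\R^d)))$ in the Wasserstein topology, not merely the weak one.

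Applying $\Phi_*$ to the output of Theorem \ref{th:main-limit} then gives precompactness of $(P_n\circ(\widehat\mu^{n,x}_t)_{t\in[0,T]}^{-1})=\Phi_*(P_n\circ(\widehat\mu^n)^{-1})$ in $\P^p(C([0,T];\P^p(\R^d)))$, with every limit supported on $\Phi(\RC^*)$. To upgrade $\Phi(\RC^*)$ to $\{(m^x_t)_{t\in[0,T]} : m\in\RC^*\text{ is Markovian}\}$, I would invoke Theorem \ref{th:markovian}, available because assumption \ref{assumption:C} is in force: for any $m \in \RC^*$ it produces a Markovian $\widetilde m \in \RC$ with $\widetilde m^x_t = m^x_t$ for all $t$ and $\Gamma(\widetilde m) \ge \Gamma(m)$; optimality of $m$ forces $\widetilde m \in \RC^*$, so $\Phi(m)=\Phi(\widetilde m)$ lies in the asserted set. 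The reverse inclusion is trivial, and this finishes the proof. There is no serious obstacle here; the only point requiring care is ensuring that the pushforward preserves the $p$-Wasserstein topology rather than only weak convergence, which the Lipschitz bound above handles routinely.
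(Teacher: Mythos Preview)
Your argument is correct and is precisely the approach the paper has in mind: the paper states this corollary as ``an immediate corollary of Theorems \ref{th:main-limit} and \ref{th:markovian}'' and gives no further proof, and your continuous-mapping argument via the $1$-Lipschitz marginal-flow map $\Phi$ together with the Markovian replacement from Theorem \ref{th:markovian} is exactly how one unpacks that sentence.
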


The rest of the paper is devoted to the proofs. Section \ref{se:estimates} is devoted to some useful preliminary results, including moment estimates on the state process and some continuity properties of the objective functional $\Gamma$. Section \ref{se:existenceproofs} proves the existence theorems \ref{th:existence} and \ref{th:markovian}. Most of the work toward Theorems \ref{th:main-limit} and \ref{th:converse-limit} is done in the preparatory Section \ref{se:limits}, with the main line of the proofs deferred to Section \ref{se:limittheorems-proof}. Finally, Section \ref{se:strongvsweak} discusses Theorems \ref{th:strongequalsweak-MF} and \ref{th:strongequalsweak-n}.

\section{Some first estimates} \label{se:estimates}

This section collects the essential estimates needed in the proofs of almost all of the theorems of the previous section. The first estimates of Section \ref{se:estimates-MF} are in the mean field regime, whereas the estimates of Section \ref{se:estimates-n} pertain to the $n$-state games. The primary role of these estimates is in obtaining compactness. If the control space $A$ were assumed compact, and if the coefficients $b$, $\sigma$, $f$, and $g$ were assumed to be bounded, none of these estimates would be needed. In the following, let $\|x\|_t = \sup_{s \in [0,t]}|x_s|$ denote the truncated supremum norm for $x \in \C^d$, for $t \in [0,T]$, and recall that $\|\cdot\| = \|\cdot\|_T$.

\subsection{Mean field estimates} \label{se:estimates-MF}

The first lemma, stated without proof, is borrowed from \cite{lacker-mfgcontrolledmartingaleproblems}. The second shows how to use the coercivity assumption (A.5) to translate optimality properties into moment bounds.

\begin{lemma}[Lemma 4.3 of \cite{lacker-mfgcontrolledmartingaleproblems}]\label{le:estimate-MF}
For each $\gamma \ge p$ such that $\int|x|^\gamma\lambda(dx) < \infty$, there exists a constant $C \ge 1$, depending only on $\gamma$, $p$, $p'$, $T$, the initial law $\lambda$, and the constant $c_1$ of Assumption (A.4) such that for all $m \in \RC$ we have
\begin{align*}
\int_{\C^d}\|x\|^pm^x(dx) = \E^m\left[\|X\|^\gamma\right] \le C\E^m\left[1 + \int_0^T\int_A|a|^\gamma\Lambda_t(da)dt\right].
\end{align*}
\end{lemma}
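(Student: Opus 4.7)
My plan is to invoke the SDE representation in Proposition \ref{pr:SDErepresentation}, apply the Burkholder--Davis--Gundy inequality together with the linear growth bounds of (A.4), and close the loop with Gronwall's lemma. Fix $m\in\RC$ and work on a filtered probability space carrying $X$, $\Lambda$, and the orthogonal martingale measures $N^1,\dots,N^{d_W}$ from Proposition \ref{pr:SDErepresentation}, and abbreviate $\bar b(s) = \int_A b(s,X_s,m^x_s,a)\Lambda_s(da)$ and $\bar a(s) = \int_A\sigma\sigma^\top(s,X_s,m^x_s,a)\Lambda_s(da)$. For any stopping time $\tau\le T$, BDG gives
\[
\E\|X\|_\tau^\gamma \le C_\gamma\Bigl(\E|X_0|^\gamma + \E\Bigl(\int_0^\tau |\bar b(s)|\,ds\Bigr)^{\!\gamma} + \E\Bigl(\int_0^\tau \mathrm{Tr}\,\bar a(s)\,ds\Bigr)^{\!\gamma/2}\Bigr).
\]

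Next I would plug in the bounds from (A.4). For the drift term, Jensen against $\Lambda_s$ (a probability on $A$) and against normalized Lebesgue on $[0,\tau]$ yields, using $\gamma\ge 1$,
\[
\Bigl(\int_0^\tau |\bar b(s)|\,ds\Bigr)^{\!\gamma} \le C\int_0^\tau \Bigl(1+\|X\|_s^\gamma + \|m^x_s\|_p^\gamma + \int_A|a|^\gamma\Lambda_s(da)\Bigr)ds.
\]
For the diffusion term, since $p_\sigma\le 2\le p'$ and $\gamma\ge p\ge p_\sigma$, Jensen (in both $s$ and $a$) gives
\[
\Bigl(\int_0^\tau \mathrm{Tr}\,\bar a(s)\,ds\Bigr)^{\!\gamma/2} \le C\int_0^\tau\Bigl(1+\|X\|_s^\gamma + \|m^x_s\|_p^\gamma + \int_A|a|^\gamma\Lambda_s(da)\Bigr)ds,
\]
where the exponents match because $p_\sigma\gamma/2\le\gamma$. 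The key simplification is that $m^x_s$ is the law of $X_s$ under $m$, so $\|m^x_s\|_p^\gamma = (\E|X_s|^p)^{\gamma/p} \le \E|X_s|^\gamma \le \E\|X\|_s^\gamma$ by Jensen, since $\gamma\ge p$; in particular, the mean field term is not random and is dominated by the state moment we are estimating. Substituting into the BDG bound gives an integral inequality
\[
\E\|X\|_\tau^\gamma \le C\Bigl(1+\int|x|^\gamma\lambda(dx) + \E\int_0^T\!\!\int_A|a|^\gamma\Lambda_t(da)\,dt\Bigr) + C\int_0^\tau \E\|X\|_s^\gamma\,ds,
\]
and Gronwall's lemma yields the claim with the stated dependence of $C$.

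The only genuine obstacle is that from $m\in\P^p(\C^d\times\V)$ we a priori only have $\E\|X\|^p<\infty$, not $\E\|X\|^\gamma<\infty$, so the computation above cannot simply be done at $\tau=T$ if one fears infinite quantities on both sides. The standard remedy is a localization: take $\tau_n=\inf\{t:\|X\|_t\ge n\}\wedge T$, run the estimate on $[0,\tau_n]$ so that every term is a priori finite, apply Gronwall, and send $n\to\infty$ by monotone convergence. The right-hand side of the final bound is independent of $n$, so $\E\|X\|^\gamma$ is finite exactly when the $\gamma$-moment of the control is, and the asserted inequality then follows (and is trivial otherwise). Apart from this localization and a careful bookkeeping to ensure that $C$ depends only on $\gamma,p,p',T,\lambda,c_1$, every step is a routine application of BDG, Jensen, and Gronwall.
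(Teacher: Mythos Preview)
The paper does not actually prove this lemma; it is stated without proof and borrowed from the cited reference. Your approach, however, is essentially the same one the paper uses for the $n$-state analogue, Lemma \ref{le:estimate-n}: the SDE representation, BDG, the growth bounds of (A.4), Jensen, and Gronwall, with the mean field term absorbed via $\|m^x_s\|_p^\gamma \le \E\|X\|_s^\gamma$.

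One small gap: bringing the exponent $\gamma/2$ inside the time integral by Jensen requires $\gamma \ge 2$, which is not assumed (only $\gamma \ge p \ge 1$, and $p$ can be strictly less than $2$). When $1 \le \gamma < 2$ the map $x \mapsto x^{\gamma/2}$ is concave, so Jensen goes the wrong way. In the proof of Lemma \ref{le:estimate-n} the paper splits into cases here, using $|x|^{\gamma/2} \le 1 + |x|$ together with $|x|^{p_\sigma} \le 1 + |x|^\gamma$ when $\gamma < 2$. With that adjustment, and the localization you correctly describe, your argument goes through.
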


\begin{lemma} \label{le:estimate-optimal}
Let $\epsilon > 0$. Suppose $\RC^\epsilon$ is the set of $m \in \RC$ satisfying
\[
\Gamma(m) \ge \sup_{m' \in \RC}\Gamma(m') - \epsilon.
\]
Then
\begin{align}
\sup_{m \in \RC^\epsilon}\E^m\int_0^T\int_A|a|^{p'}\Lambda_t(da)dt < \infty. \label{def:le:estimate-optimal}
\end{align}
Moreover, $\sup_{m \in \RC}\Gamma(m) < \infty$.
\end{lemma}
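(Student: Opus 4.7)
The plan is to combine the coercivity built into assumption (A.5) with the moment bound of Lemma \ref{le:estimate-MF} and then invert the resulting inequality. Throughout, for $m \in \RC$ write
\[
K(m) := \E^m\!\int_0^T\!\!\int_A |a|^{p'}\Lambda_t(da)\,dt,
\]
and note that since $\Lambda(dt,da)/T$ is a probability measure on $[0,T]\times A$, Jensen's inequality (using $p < p'$) gives $\E^m\!\int_0^T\!\int_A|a|^p\Lambda_t(da)\,dt \le T\,(K(m)/T)^{p/p'}$.

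First I would establish the upper bound on $\Gamma$. Using the upper bounds on $f$ and $g$ in (A.5) together with $\int|z|^p\,m^x_t(dz) = \E^m|X_t|^p \le \E^m\|X\|^p$, I get
\[
\Gamma(m) \le c_2(T+1)\bigl(1 + 2\E^m\|X\|^p\bigr) - c_3 K(m).
\]
Lemma \ref{le:estimate-MF} applied with $\gamma = p$ (legal since $\lambda \in \P^{p'}(\R^d) \subset \P^p(\R^d)$ by (A.2)) bounds $\E^m\|X\|^p$ by $C(1 + \E^m\!\int_0^T\!\int_A|a|^p\Lambda_t(da)\,dt)$, which by the Jensen step above is at most $C'(1 + K(m)^{p/p'})$. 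Combining yields
\begin{equation}\label{eq:plan-upper}
\Gamma(m) \le C_0 + C_1\,K(m)^{p/p'} - c_3\,K(m)
\end{equation}
for constants $C_0,C_1$ independent of $m$. Since $p/p' < 1$ by (A.2), the right side of \eqref{eq:plan-upper} is bounded above as a function of $K(m) \ge 0$ by some finite constant $M$. Hence $\sup_{m\in\RC}\Gamma(m) \le M < \infty$, giving the second claim of the lemma.

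Next I need $\sup_{m \in \RC}\Gamma(m) > -\infty$; otherwise $\RC^\epsilon = \RC$ and the bound on $K$ is not automatic from \eqref{eq:plan-upper}. Pick any constant $a_0 \in A$ and invoke Remark \ref{re:rcnonempty} to obtain $m_0 \in \RC$ with $\Lambda_t = \delta_{a_0}$ a.e., so that $K(m_0) = T|a_0|^{p'} < \infty$. Applying the lower bounds in (A.5) together with Lemma \ref{le:estimate-MF} at $\gamma = p'$ (which uses $\lambda \in \P^{p'}(\R^d)$ from (A.2), and $\int_0^T\!\int_A|a|^{p'}\Lambda_t(da)\,dt = T|a_0|^{p'}$) gives a finite lower bound on $\Gamma(m_0)$. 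Thus $\Gamma^* := \sup_{m' \in \RC}\Gamma(m')$ is finite.

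Finally, for $m \in \RC^\epsilon$ we have $\Gamma^* - \epsilon \le \Gamma(m)$, which when combined with \eqref{eq:plan-upper} becomes
\[
c_3\,K(m) - C_1\,K(m)^{p/p'} \le C_0 - \Gamma^* + \epsilon.
\]
Since $p/p' < 1$, the left side tends to $+\infty$ with $K(m)$, so this inequality forces $K(m) \le K^\star$ for some $K^\star$ depending only on $\epsilon$, $c_3$, $C_0$, $C_1$, and $\Gamma^*$, which proves \eqref{def:le:estimate-optimal}. The main (mild) obstacle in carrying this out is bookkeeping the constants when transferring from the Wasserstein moments $\int|z|^p\,m^x_t(dz)$ to $\E^m\|X\|^p$ and from $|a|^p$ to $|a|^{p'}$ through Jensen, but no nontrivial estimate beyond Lemma \ref{le:estimate-MF} is needed.
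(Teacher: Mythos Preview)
Your proof is correct and follows essentially the same approach as the paper: bound $\Gamma(m)$ from above using (A.5) and Lemma \ref{le:estimate-MF}, bound $\sup_{m'\in\RC}\Gamma(m')$ from below via a constant control (Remark \ref{re:rcnonempty}), and combine for $m \in \RC^\epsilon$. The only cosmetic difference is that the paper keeps the term $\E^m\!\int_0^T\!\int_A|a|^p\Lambda_t(da)\,dt$ and uses the pointwise bound $C|a|^p - c_3|a|^{p'} \le M$ in $a$, whereas you pass through Jensen to reduce everything to a scalar inequality in $K(m)$; both amount to the same coercivity argument.
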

\begin{proof}
Recall first that $\E^m\int_0^T\int_A|a|^p\Lambda_t(da)dt < \infty$ for all $m \in \RC$ by assumption; this ensures that the following expressions are well-defined.
Use the upper bounds on $f$ and $g$ from assumption (A.5) along with Lemma \ref{le:estimate-MF} to find a constant $C > 0$ (which will change from line to line) such that, for all $m \in \RC$,
\begin{align}
\Gamma(m) &\le C\E^m\left[1 + \|X\|^p + \int_{\C^d}\|x\|^pm^x(dx)\right] - c_3\E^m\int_0^T\int_A|a|^{p'}\Lambda_t(da)dt \nonumber \\
	&\le C\E^m\left[1 + \int_0^T\int_A|a|^p\Lambda_t(da)dt\right] - c_3\E^m\int_0^T\int_A|a|^{p'}\Lambda_t(da)dt. \label{pf:control-estimate1}
\end{align}
This already proves $\sup_{m \in \RC}\Gamma(m) < \infty$, as $a \mapsto C|a|^p - c_3|a|^{p'}$ is bounded from above. To prove the first claim, fix arbitrarily a constant control $a_0 \in A$, and let $m^0$ denote an element of $\RC$ satisfying $m^0(\Lambda_t = \delta_{a_0}, \ a.e. \ t)=1$. (As in Remark \ref{re:rcnonempty}, the existence of such an $m^0$ follows from a result of G\"artner \cite[Theorem 2.9]{gartnermkv}.)
Lemma \ref{le:estimate-MF} implies
\[
\int_{\C^d}\|x\|^{p'}(m^0)^x(dx) = \E^{m^0}\left[\|X\|^{p'}\right] \le C(1 + T|a_0|^{p'}).
\]
Then use the lower bounds of assumption (A.5) to show $\Gamma(m^0) > -\infty$. For $m \in \RC^\epsilon$ we have $\Gamma(m) \ge \Gamma(m^0) - \epsilon$, which combined with \eqref{pf:control-estimate1} yields
\begin{align*}
\sup_{m \in \RC^\epsilon}\E^m\int_0^T\int_A\left(|a|^{p'}- C|a|^p\right)\Lambda_t(da)dt < \infty.
\end{align*}
This is enough to complete the proof.
\end{proof}

\subsection{$n$-state estimates} \label{se:estimates-n}

Here we derive an analogous pair of lemmas for the $n$-state control problem after first recalling some basic facts about martingale measures, all of which can be found in both \cite{walsh-introspde} and \cite{elkarouimeleard-martingalemeasure}. Suppose $N$ is martingale measure with intensity measure $\Lambda_t(da)dt$, where $\Lambda$ is a predictable $\P(A)$-valued process, and $(h^i_t)_{t \in [0,T]}$ is a predictable random function on $A$. \footnote{Assuming $(\Omega,\F,\FF,\PP)$ is the filtered probability space in the background, a \emph{predictable random function on $A$} is a map $h : [0,T] \times \Omega \times A \rightarrow \R$ which is jointly measurable with respect to the $\FF$-predictable $\sigma$-field on $[0,T]\times \Omega$ and the Borel $\sigma$-field on $A$. We suppress $\omega$ from the notation as usual, writing $h_t(a)$ in place of $h(t,\omega,a)$.} If
\[
\E\left[\int_0^T\int_A|h^i_t(a)|^2\Lambda_t(da)dt\right] < \infty,
\]
then
\[
M^i_t = \int_0^t\int_Ah^i_s(a)N(da,ds)
\]
is a martingale, for $i=1,2$. The covariation between $M^1$ and $M^2$ is
\[
[M^1,M^2]_t = \int_0^t\int_Ah^1_s(a)h^2_s(a)\Lambda_s(da)ds,
\]
assuming this integral is well defined.
In particular, the quadratic variation of $M^1$ is
\[
[M^1,M^1]_t = \int_0^t\int_A|h^1_s(a)|^2\Lambda_s(da)ds,
\]
assuming the latter is finite almost surely. 
If $\widetilde{N}$ and $N$ are \emph{orthogonal} martingale measures, then the martingales
\[
\int_0^t\int_Ah_s(a)N(da,ds) \quad\quad\quad \text{and} \quad\quad\quad \int_0^t\int_A\widetilde{h}_s(a)\widetilde{N}(da,ds)
\]
are themselves orthogonal (i.e., the covariation is identically zero) for square-integrable $h,\widetilde{h}$.

\begin{lemma} \label{le:estimate-n}
For each $\gamma \ge p$ such that $\int|x|^\gamma\lambda(dx) < \infty$, there exists a constant $C \ge 1$, depending only on $\gamma$, $p$, $p'$, $T$, the initial law $\lambda$, and the constant $c_1$ of Assumption (A.4) such that, for all $n \ge 1$ and $P \in \RC_n$, we have
\begin{align}
\E^P[\|X^k\|^\gamma] &\le C\E^P\left[1 + \int_0^T\int_A|a|^\gamma\Lambda^k_t(da)dt + \frac{1}{n}\sum_{i=1}^n\int_0^T\int_A|a|^\gamma\Lambda^i_t(da)dt\right] \label{def:estimate-n-1}
\end{align}
for each $k=1,\ldots,n$, and
\begin{align}
\E^P\left[\int_{\C^d}\|x\|^\gamma\widehat{\mu}^{n,x}(dx)\right] &= \frac{1}{n}\sum_{k=1}^n\E^P[\|X^k\|^\gamma] \le C\E^P\left[1 + \frac{1}{n}\sum_{k=1}^n\int_0^T\int_A|a|^\gamma\Lambda^k_t(da)dt\right]. \label{def:estimate-n-2}
\end{align}
\end{lemma}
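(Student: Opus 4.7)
The plan is to start from the SDE representation in Proposition \ref{pr:SDErepresentation-n}, apply the Burkholder--Davis--Gundy (BDG) inequality to the martingale measure term, and then combine Assumption (A.4) with Jensen's inequality to get integrable growth bounds in each variable. Specifically, for each $k$, taking $\gamma \ge p$ and noting that $\gamma \ge 1 \vee p_\sigma$ with $p_\sigma \le 2$, I would estimate
\[
|b(t,x,m,a)|^\gamma + |\sigma(t,x,m,a)|^\gamma \le C\Bigl(1 + |x|^\gamma + \int|z|^\gamma m(dz) + |a|^\gamma\Bigr),
\]
by Jensen applied to the exponents $p/\gamma \le 1$ in the drift and $p_\sigma/2 \le 1$ in the diffusion (so the $\sigma$-bound becomes $|\sigma|^\gamma = (|\sigma|^2)^{\gamma/2} \le C(1 + |x|^{p_\sigma \gamma/2} + \ldots)$, and since $p_\sigma\gamma/2 \le \gamma$, each term is dominated by the corresponding power-$\gamma$ quantity plus a constant).

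Next, I would apply BDG to the stochastic integral against $N^k$, whose quadratic variation is $\int_0^t \int_A |\sigma(s,X^k_s,\widehat{\mu}^{n,x}_s,a)|^2 \Lambda^k_s(da)\,ds$, and use Jensen against the probability measures $\Lambda^k_s(da)$ and $ds/T$ to bring the outer $\gamma$-th power inside. This produces, after writing $\int|z|^\gamma \widehat{\mu}^{n,x}_s(dz) = \frac{1}{n}\sum_i |X^i_s|^\gamma$, an inequality of the form
\[
\E^P[\|X^k\|_t^\gamma] \le C_0 + C\int_0^t \E^P[\|X^k\|_s^\gamma]\,ds + C\int_0^t \frac{1}{n}\sum_{i=1}^n \E^P[\|X^i\|_s^\gamma]\,ds + C\,\E^P\!\int_0^T\!\!\!\int_A |a|^\gamma \Lambda^k_s(da)\,ds,
\]
where $C_0$ absorbs $\int |x|^\gamma \lambda(dx)$ (finite by hypothesis).

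Averaging this inequality over $k = 1, \ldots, n$ cancels the distinction between individual and aggregate terms, yielding a closed Gronwall-type inequality for the quantity $\Phi_n(t) := \frac{1}{n}\sum_k \E^P[\|X^k\|_t^\gamma]$:
\[
\Phi_n(t) \le C_0 + 2C\int_0^t \Phi_n(s)\,ds + C\,\E^P\!\left[\frac{1}{n}\sum_{k=1}^n \int_0^T\!\!\int_A |a|^\gamma \Lambda^k_s(da)\,ds\right].
\]
Gronwall's lemma then produces \eqref{def:estimate-n-2}. Substituting this bound for the averaged mean-field term back into the per-$k$ inequality and applying Gronwall once more in the variable $\E^P[\|X^k\|_t^\gamma]$ delivers \eqref{def:estimate-n-1}.

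The main obstacle I anticipate is bookkeeping the exponents: the BDG step produces the quadratic variation to the power $\gamma/2$, and it must be checked carefully that $p_\sigma \le 2$ together with $\gamma \ge p \ge p_\sigma$ suffices to dominate each term of the $\sigma$ growth estimate by linear-in-$|x|^\gamma$, linear-in-measure, and linear-in-$|a|^\gamma$ pieces; this is where Assumption (A.2) is essential. A minor technical point is to justify the use of BDG despite the moment of $|\sigma|$ being only assumed finite a priori via Assumption (A.4) combined with the integrability in Proposition \ref{pr:SDErepresentation-n}(3); this can be handled by a stopping-time argument and monotone convergence.
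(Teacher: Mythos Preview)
Your proposal is correct and follows essentially the same route as the paper's proof: SDE representation via Proposition \ref{pr:SDErepresentation-n}, BDG on the martingale measure integral, growth bounds from (A.4), averaging over $k$, Gronwall for \eqref{def:estimate-n-2}, then substitution and a second Gronwall for \eqref{def:estimate-n-1}. The only point worth flagging is the exponent bookkeeping you already anticipate: when $\gamma<2$ the map $t\mapsto t^{\gamma/2}$ is concave, so Jensen does not push the $\gamma/2$ power inside the time integral; the paper handles this by splitting into the cases $\gamma\ge 2$ (Jensen plus $|x|^{p_\sigma\gamma/2}\le 1+|x|^\gamma$) and $2>\gamma\ge p\ge 1\vee p_\sigma$ (using $|x|^{\gamma/2}\le 1+|x|$ and $|x|^{p_\sigma}\le 1+|x|^\gamma$).
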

\begin{proof}
Fix $n$ and $P \in \RC_n$, and use Proposition \ref{pr:SDErepresentation-n} to express $(X^1,\ldots,X^n)$ (under $P$) as the solution of an SDE driven by $(N^1,\ldots,N^n)$, where each $N^k$ is a vector of $d_W$ orthogonal martingale measures with common intensity measure $\Lambda^k_t(da)dt$. That is,
\[
dX^k_t = \int_Ab(t,X^k_t,\widehat{\mu}^{n,x}_t,a)\Lambda^k_t(da)dt + \int_A\sigma(t,X^k_t,\widehat{\mu}^{n,x}_t,a)N^k(da,dt).
\]
Recall the above remarks on quadratic variations of stochastic integrals with respect to martingale measures. Apply the Burkholder-Davis-Gundy inequality and assumption (A.4) to find a universal constant $C$ (which will change from line to line) such that, for each $k=1,\ldots,n$,
\begin{align}
\E\left[\|X^k\|_t^\gamma\right] &\le C\E\left[|X^k_0|^\gamma + \left(\int_0^t\int_A|b(t,X^k_s,\widehat{\mu}^{n,x}_s,a)|\Lambda^k_s(da)ds\right)^\gamma \right] \nonumber \\
	&\quad + C\E\left[ \left(\int_0^t\int_A|\sigma(t,X^k_s,\widehat{\mu}^{n,x}_s,a)|^2\Lambda^k_s(da)ds\right)^{\gamma/2}\right] \nonumber \\
	&\le C\E\left\{1 + |X^k_0|^\gamma + \int_0^t\left[\|X^k\|_s^\gamma + \left(\int_{\C^d}\|x\|_s^p\widehat{\mu}^{n,x}(dx)\right)^{\gamma/p} + \int_A|a|^\gamma\Lambda^k_s(da)\right]ds \right\} \nonumber \\
	&\quad + C\E\left\{\left[\int_0^t\int_A\left(\|X^k\|_s^{p_\sigma} + \left(\int_{\C^d}\|x\|_s^p\widehat{\mu}^{n,x}(dx)\right)^{p_\sigma/p} + |a|^{p_\sigma/p}\right)\Lambda^k_s(da)ds\right]^{\gamma/2}\right\} \nonumber \\
	&\le C\E\left\{1 + \int_0^t\left[\|X^k\|_s^\gamma + \int_{\C^d}\|x\|_s^\gamma \widehat{\mu}^{n,x}(dx) + \int_A|a|^\gamma\Lambda^k_s(da)\right]ds \right\}. \label{pf:estimate-n1}
\end{align}
The derivation of the last line used a number of facts. First of all, note that $\E[|X^1_0|^p]=\E[|X^k_0|^p]$ by symmetry, and this term was subsumed in the constant $C$ in the last line. Second, Jensen's inequality yielded $\left(\int_{\C^d}\|x\|_s^p\widehat{\mu}^{n,x}(dx)\right)^{\gamma/p} \le \int_{\C^d}\|x\|_s^\gamma \widehat{\mu}^{n,x}(dx)$, because $\gamma \ge p$. Finally, to deal with the exponent of $\gamma/2$ outside of the integral, there are two cases. First, if $\gamma \ge 2$,  Jensen's inequality lets us bring the $\gamma/2$ inside of the time integral, and we then use the inequality $|x|^{p_\sigma\gamma/2} \le 1+ |x|^\gamma$ which holds because $p_\sigma \le 2$. The other alternative is $2 > \gamma \ge p \ge 1 \vee p_\sigma$, in which case we use the inequalities  $|x|^{\gamma/2} \le 1 + |x|$ and $|x|^{p_\sigma} \le 1 + |x|^\gamma$. With \eqref{pf:estimate-n1} now justified, average over $k=1,\ldots,n$ to get
\begin{align*}
\E\left[\int_{\C^d}\|x\|_t^\gamma\widehat{\mu}^{n,x}(dx)\right] &\le C\E\left\{1 + \int_0^t\left[\int_{\C^d}\|x\|_s^\gamma \widehat{\mu}^{n,x}(dx) + \frac{1}{n}\sum_{k=1}^n\int_A|a|^\gamma\Lambda^k_s(da)\right]ds \right\}.
\end{align*}
The second claim \eqref{def:estimate-n-2} now follows from Gronwall's inequality. The first claim \eqref{def:estimate-n-1} follows from \eqref{pf:estimate-n1}, \eqref{def:estimate-n-2}, and Gronwall's inequality.
\end{proof}

\begin{lemma} \label{le:estimate-optimal-n}
Let $\epsilon \ge 0$.
There exists a constant $C \ge 0$ such that, for all $n$ and all weak $\epsilon$-optimal controls $P \in \RC_n$, we have
\begin{align*}
\frac{1}{n}\sum_{k=1}^n\E^P\int_0^T\int_A|a|^{p'}\Lambda^k_t(da)dt \le C.
\end{align*}
\end{lemma}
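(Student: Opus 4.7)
The proof plan follows the same architecture as the mean field analogue (Lemma~\ref{le:estimate-optimal}), with the averaging over the $n$ particles playing a key role so that the bound is uniform in $n$.

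\textbf{Step 1: Upper bound on the reward.} First I would use the upper bounds on $f$ and $g$ from assumption (A.5) to estimate
\[
\E^P[\Gamma(\widehat{\mu}^n)] \le \frac{C}{n}\sum_{i=1}^n \E^P\!\left[1+\|X^i\|^p+\int_{\C^d}\|x\|^p\,\widehat{\mu}^{n,x}(dx)\right] - \frac{c_3}{n}\sum_{i=1}^n \E^P\int_0^T\!\!\int_A |a|^{p'}\Lambda^i_t(da)\,dt.
\]
Applying Lemma~\ref{le:estimate-n} with $\gamma=p$ to the averaged state-moment term, and then using $|a|^p\le 1+|a|^{p'}$, gives
\[
\E^P[\Gamma(\widehat{\mu}^n)] \le C_1 + C_1\,\frac{1}{n}\sum_{k=1}^n \E^P\!\int_0^T\!\!\int_A |a|^{p}\Lambda^k_t(da)\,dt - \frac{c_3}{n}\sum_{i=1}^n \E^P\!\int_0^T\!\!\int_A |a|^{p'}\Lambda^i_t(da)\,dt,
\]
where $C_1$ is independent of $n$ and $P$. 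This is the crucial structural fact: only the average $p$- and $p'$-moments of the controls appear.

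\textbf{Step 2: Baseline lower bound.} Using Remark~\ref{re:rcn-nonempty}, fix some $a_0\in A$ and let $P^0_n\in\RC_n$ be a control with $\Lambda^k_t=\delta_{a_0}$ almost surely for all $k$. Applying Lemma~\ref{le:estimate-n} with $\gamma=p'$ and the lower bounds on $f,g$ from (A.5), we get $\E^{P^0_n}[\Gamma(\widehat{\mu}^n)]\ge -C_2$ with $C_2$ independent of $n$, because the state-moment estimate \eqref{def:estimate-n-2} depends only on $T|a_0|^{p'}$. By $\epsilon$-optimality of $P$, together with $\epsilon_n$ being bounded (say $\epsilon\le 1$ suffices, or just use $\sup_n\epsilon_n<\infty$ if $\epsilon$ depends on $n$), we conclude $\E^P[\Gamma(\widehat{\mu}^n)] \ge -C_3$ uniformly in $n$.

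\textbf{Step 3: Absorbing the $p$-th moments.} Combining Steps 1 and 2 yields
\[
\frac{c_3}{n}\sum_{i=1}^n \E^P\!\int_0^T\!\!\int_A |a|^{p'}\Lambda^i_t(da)\,dt \le C_1+C_3 + C_1\,\frac{1}{n}\sum_{k=1}^n \E^P\!\int_0^T\!\!\int_A |a|^{p}\Lambda^k_t(da)\,dt.
\]
Since $p'>p$, a Young-type inequality $C_1|a|^{p}\le \tfrac{c_3}{2}|a|^{p'}+C_4$ absorbs the right-hand $p$-moment term into half of the left-hand side, giving
\[
\frac{1}{n}\sum_{k=1}^n \E^P\!\int_0^T\!\!\int_A |a|^{p'}\Lambda^k_t(da)\,dt \le \frac{2}{c_3}(C_1+C_3+TC_4),
\]
which is the desired uniform bound.

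\textbf{Main obstacle.} The only subtlety is ensuring that every constant produced remains independent of $n$; this is why it is essential that the $n$-state objective is averaged over $i$ and that Lemma~\ref{le:estimate-n} bounds the \emph{average} state moments in terms of only the \emph{average} control moments, rather than picking up a factor that grows with $n$. Apart from this bookkeeping, the argument is a direct transcription of the mean field case in Lemma~\ref{le:estimate-optimal}.
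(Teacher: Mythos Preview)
Your proposal is correct and follows essentially the same approach as the paper: upper-bound the reward via (A.5) and Lemma~\ref{le:estimate-n}, lower-bound it via a constant reference control $a_0$ and the lower bounds in (A.5), then use $\epsilon$-optimality and $p'>p$ to absorb the $|a|^p$ term. The only cosmetic difference is that the paper stops at the inequality $\frac{1}{n}\sum_k \E^P\int\!\!\int (|a|^{p'}-C|a|^p)\,\Lambda^k_t(da)dt \le C$ and leaves the final absorption implicit, whereas you spell out the Young-type step; also note that in the actual statement $\epsilon \ge 0$ is a fixed constant, so your aside about bounding $\epsilon_n$ is unnecessary.
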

\begin{proof}
Fix $P \in \RC_n$, and recall that $\E^P\int_0^T\int_A|a|^p\Lambda^k_t(da)dt < \infty$ for all $k$, as this ensures that the following expressions are well-defined.
Use the upper bounds on $f$ and $g$ from assumption (A.5) along with Lemma \ref{le:estimate-n} to get
\begin{align}
\E^P[\Gamma(\widehat{\mu}^n)] &\le C\E^P\left[1 + \int_{\C^d}\|x\|^p\widehat{\mu}^{n,x}(dx)\right] - c_3\frac{1}{n}\sum_{k=1}^n\E^P\int_0^T\int_A|a|^{p'}\Lambda^k_t(da)dt \nonumber \\
	&\le C\left(1 + \frac{1}{n}\sum_{k=1}^n\E^P\int_0^T\int_A|a|^p\Lambda^k_t(da)dt\right) - c_3\frac{1}{n}\sum_{k=1}^n\E^P\int_0^T\int_A|a|^{p'}\Lambda^k_t(da)dt. \label{pf:control-estimate-n1}
\end{align}
As usual, $C > 0$ is a constant, independent of $n$ and $P$, which can change from line to line.
On the other hand, fix arbitrarily a constant control $a_0 \in A$, and let $P^0_n$ denote an element of $\RC_n$ satisfying $P^0_n(\Lambda^k_t = \delta_{a_0}, \ a.e. \ t)=1$ for all $k=1,\ldots,n$ (as in Remark \ref{re:rcn-nonempty}, the existence of such $P^0_n$ follows from the results of Stroock and Varadhan \cite{stroockvaradhanbook}). Lemma \ref{le:estimate-n} implies
\[
\E^{P^0_n}\left[\int_{\C^d}\|x\|^{p'}\widehat{\mu}^{n,x}(dx)\right] = \frac{1}{n}\sum_{k=1}^n\E^{P^0_n}\left[\|X^k\|^{p'}\right] \le C(1 + T|a_0|^{p'}).
\]
Then use the lower bounds of Assumption (A.5) to show $\inf_n \E^{P^0_n}[\Gamma(\widehat{\mu}^n)] > -\infty$.
Now, if $P \in \RC_n$ is $\epsilon$-optimal, we have $\E^P[\Gamma(\widehat{\mu}^n)] \ge \E^{P^0_n}[\Gamma(\widehat{\mu}^n)] - \epsilon$, which combined with \eqref{pf:control-estimate-n1} yields a constant $C \ge 0$ such that, for all $n$ and all $\epsilon$-optimal $P \in \RC_n$,
\begin{align*}
\frac{1}{n}\sum_{k=1}^n\E^P\int_0^T\int_A\left(|a|^{p'}- C|a|^p\right)\Lambda^k_t(da)dt \le C.
\end{align*}
\end{proof}

\subsection{A tightness criterion}
Finally, we state without proof a useful tailor-made compactness result for controlled It\^o processes, which is essentially an application of Aldous' criterion. Its first claim is proven in \cite[Proposition B.4]{lacker-mfgcontrolledmartingaleproblems} and its second in \cite[Proposition 5.3]{lacker-mfglimit}, or rather an easy extension thereof (as it did not allow for control in the volatility).

\begin{proposition} \label{pr:itocompactness}
Fix $c > 0$. For $\kappa > 0$, let $\Q_\kappa$ denote the set of laws $\PP \circ (X,\Lambda)^{-1}$, where
\begin{enumerate}
\item $(\Omega,\F,\FF,\PP)$ is a filtered probability space supporting a $d$-dimensional $\FF$-adapted process $X$, a $\P^p(A)$-valued $\FF$-predictable process $\Lambda$, and a vector $N=(N^1,\ldots,N^{d_W})$ of orthogonal $\FF$-martingale measures, each with intensity $\Lambda_t(da)dt$.
\item The state equation holds,
\[
dX_t = \int_AB(t,a)\Lambda_t(da)dt + \int_A\Sigma(t,a)N(da,dt),
\]
where $(B,\Sigma) : [0,T] \times \Omega \times A \rightarrow \R^d \times \R^{d\times d_W}$ are jointly measurable with respect to the predictable $\sigma$-field of $[0,T]\times \Omega$ and the Borel $\sigma$-field of $A$.
\item It holds for all $(t,a)$, a.s., that
\[
|B(t,a)| \le c\left(1 + |X_t| + |a|\right), \quad\quad |\Sigma(t,a)|^2 \le c\left(1 + |X_t|^{p_\sigma} + |a|^{p_\sigma}\right).
\]
\item Lastly, we have
\[
\E\left[|X_0|^{p'} + \int_0^T\int_A|a|^{p'}\Lambda_t(da)dt\right] \le \kappa.
\]
\end{enumerate}
(That is, $\Q_\kappa$ is defined by varying the probability space as well as $B$ and $\Sigma$.)
Then $\Q_\kappa$ is precompact in $\P^p(\C^d\times\V)$. Moreover, if a triangular array $\{\kappa_{n,i} : 1 \le i \le n\} \subset [0,\infty)$ satisfies $\sup_n\frac{1}{n}\sum_{i=1}^n\kappa_{n,i} < \infty$, then the set
\[
\left\{\frac{1}{n}\sum_{i=1}^nQ_i : n \ge 1, \ i=1,\ldots,n, \ 	Q_i \in \Q_{\kappa_{n,i}}\right\}
\]
is precompact in $\P^p(\C^d\times\V)$.
\end{proposition}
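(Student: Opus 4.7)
The plan is to deduce precompactness in $\P^p(\C^d\times\V)$ by combining weak tightness with uniform $p$-integrability, both of which flow from the $p'$-moment hypothesis (4), and then to observe that every estimate is affine in $\kappa$ so that the triangular-array case is automatic. I would organize the argument in three substantive steps plus a final averaging step.

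First, an a priori moment estimate. For $Q \in \Q_\kappa$, the process $X$ satisfies the SDE of assumption (2) with coefficients of linear growth in $(X_t,a)$. Applying the Burkholder–Davis–Gundy inequality to the martingale-measure integral, together with the quadratic-variation formulas recalled at the start of Section \ref{se:estimates-n}, assumption (3), and the fact that $p' \ge 2 \ge p_\sigma$, and closing with Gronwall's lemma, one produces a constant $C = C(c, p, p', p_\sigma, T)$, independent of $\kappa$, such that
\begin{align*}
\E^Q\|X\|^{p'} \le C\left(1 + \E^Q|X_0|^{p'} + \E^Q\int_0^T\int_A|a|^{p'}\Lambda_t(da)dt\right) \le C(1+\kappa).
\end{align*}
The careful handling of the exponent $p_\sigma\le 2$ follows the two-case split already performed in the proof of Lemma \ref{le:estimate-n}.

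Second, weak tightness on $\C^d\times\V$. Since $p' > p$, the uniform bound $\E^Q\int|a|^{p'}\Lambda(dt,da) \le \kappa$ forces tightness of the $\V$-marginal in the Wasserstein-$p$ topology. For the $\C^d$-marginal I would invoke Aldous's criterion: the initial law is tight by the bound on $\E^Q|X_0|^{p'}$, and for any stopping time $\tau$ with $\tau+\delta\le T$,
\begin{align*}
\E^Q|X_{\tau+\delta}-X_\tau|^2 \le C\delta\left(1 + \E^Q\|X\|^{p'} + \E^Q\int_0^T\int_A|a|^{p'}\Lambda_t(da)dt\right) \le C\delta(1+\kappa),
\end{align*}
where the drift term contributes $O(\delta)$ by Cauchy–Schwarz and the martingale term contributes $O(\delta)$ via its quadratic variation combined with assumption (3). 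Chebyshev then gives uniform vanishing of the modulus over $\Q_\kappa$ as $\delta\downarrow 0$. Third, I would upgrade weak precompactness to $\P^p$-precompactness: the uniform $p'$-moment bound together with $p'>p$ yields uniform $p$-integrability of $(x,q)\mapsto d_{\C^d\times\V}((x,q),(x_0,q_0))^p$, which (by the characterization of $\ell_{\C^d\times\V,p}$-convergence recalled in Section \ref{se:modelsetup}) is precisely what promotes weak precompactness to $\ell_{\C^d\times\V,p}$-precompactness.

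For the triangular-array claim, observe that every bound above is affine in $\kappa$. Writing $P_n = \frac{1}{n}\sum_{i=1}^n Q_{n,i}$ with $Q_{n,i}\in\Q_{\kappa_{n,i}}$, linearity of expectation yields
\begin{align*}
\E^{P_n}\|X\|^{p'} + \E^{P_n}\int_0^T\int_A|a|^{p'}\Lambda_t(da)dt \le C\left(1 + \frac{1}{n}\sum_{i=1}^n \kappa_{n,i}\right) \le C\left(1 + \sup_n\frac{1}{n}\sum_{i=1}^n \kappa_{n,i}\right),
\end{align*}
and an analogous averaged Aldous estimate $\E^{P_n}|X_{\tau+\delta}-X_\tau|^2 \le C\delta(1+\sup_n\frac{1}{n}\sum_i\kappa_{n,i})$. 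The three steps above then carry over verbatim to $\{P_n\}$. The main obstacle, which is really just bookkeeping, is ensuring that the constants produced by BDG and Gronwall are independent of $\kappa$ and enter linearly in the moment data; once this is secured, the averaging needed for the triangular-array statement is painless, and the only place where one must truly exploit $p'>p$ (rather than $p'\ge p$) is in converting the uniform moment bound into uniform $p$-integrability for the Wasserstein topology.
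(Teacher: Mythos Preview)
Your proposal is correct and follows exactly the route the paper indicates: the proposition is stated without proof, with the remark that it ``is essentially an application of Aldous' criterion,'' citing \cite[Proposition B.4]{lacker-mfgcontrolledmartingaleproblems} and \cite[Proposition 5.3]{lacker-mfglimit}; your three steps (BDG/Gronwall moment bound, Aldous tightness plus $\V$-marginal tightness, and the $p'>p$ uniform-integrability upgrade) are precisely that argument, and the affine-in-$\kappa$ observation is the right mechanism for the triangular-array claim. One small imprecision: the martingale contribution to $\E^Q|X_{\tau+\delta}-X_\tau|^2$ is in general $O(\delta^{1-p_\sigma/p'})$ rather than $O(\delta)$, since the control moment over $[\tau,\tau+\delta]$ is handled via H\"older with exponent $p'/p_\sigma$, but as $p'>p_\sigma$ strictly (from $p'>p\ge p_\sigma$) this still vanishes with $\delta$ and Aldous goes through unchanged.
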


\section{Proofs of existence Theorems \ref{th:existence} and \ref{th:markovian}} \label{se:existenceproofs}

The existence Theorem \ref{th:existence} is an immediate consequence of the following Lemmas \ref{le:gamma-usc} and \ref{le:rceps-compact}, which reduces the problem to maximizing an upper semicontinuous function on a compact set. Throughout the section, assumption \ref{assumption:A} is in force. The first lemma is essentially contained in \cite[Lemma 4.5]{lacker-mfgcontrolledmartingaleproblems} and \cite[Lemma 4.5]{lacker-mfglimit}, but we include the proof for the sake of transparency.

\begin{lemma} \label{le:gamma-usc}
Under assumption \ref{assumption:A}, $\Gamma$ is upper semicontinuous on $\P^p(\C^d\times\V)$, and the map $\P^p(\P^p(\C^d\times\V)) \ni P \mapsto \E^P[\Gamma(\mu)]$ is upper semicontinuous. When assumption \ref{assumption:B} holds as well, the latter function is continuous when restricted to any set $K \subset \P^p(\P^p(\C^d\times\V))$ satisfying
\begin{align}
\lim_{r\rightarrow\infty}\sup_{P \in K}\E^P\left[\int_{\C^d\times\V} Z1_{\{Z \ge r\}}\,d\mu\right] = 0, \quad \text{ where } \quad Z(x,q)=\|x\|^{p'} + \int_0^T\int_A|a|^{p'}q_t(da)dt. \label{def:uniformlyintegrable}
\end{align}
\end{lemma}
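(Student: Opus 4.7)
I would proceed in three steps, matching the three assertions of the lemma. Throughout I would freely use that $\C^d\times\V$ is Polish and that $\ell_{\C^d\times\V,p}$-convergence is equivalent to weak convergence together with convergence of $p$-th moments.

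\emph{Step 1 (USC of $\Gamma$).} Fix $m_n\to m$ in $\P^p(\C^d\times\V)$. By Skorokhod representation I realize $m_n,m$ as the laws of $(X_n,\Lambda_n),(X,\Lambda)$ with $(X_n,\Lambda_n)\to(X,\Lambda)$ almost surely and with $\{\|X_n\|^p+\int_0^T\!\!\int_A|a|^p\,\Lambda_n(dt,da)\}$ uniformly integrable. Continuity of the marginal map $\mu\mapsto\mu^x$ from $\P^p(\C^d\times\V)$ to $\P^p(\C^d)$ then gives $m^x_{n,t}\to m^x_t$ in $\P^p(\R^d)$ for every $t\in[0,T]$. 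Writing
\[
\Gamma(m_n)=\E\!\left[\int_0^T\!\!\int_A f(t,X_n(t),m^x_{n,t},a)\,\Lambda_n(dt,da)\right]+\E[g(X_n(T),m^x_{n,T})],
\]
the joint upper semicontinuity of $f$ and $g$ in (A.3), combined with the a.s.\ convergences, gives pathwise $\limsup$ bounds for the two integrands. The positive parts are dominated, via (A.5), by $C(1+\|X_n\|^p+\int\|y\|^p\,m^x_n(dy))$, which is uniformly integrable, so reverse Fatou yields $\limsup_n\Gamma(m_n)\le\Gamma(m)$.

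\emph{Step 2 (USC of $P\mapsto\E^P[\Gamma(\mu)]$).} A direct computation (as in the proof of Lemma~\ref{le:estimate-optimal}) shows $\Gamma(\mu)\le C(1+\int_{\C^d}\|x\|^p\,\mu^x(dx))$, a continuous function of $\mu$ of $p$-growth on $\P^p(\C^d\times\V)$. Combining the USC of $\Gamma$ from Step~1 with this upper bound, a second application of the Skorokhod/reverse-Fatou argument at the $\P^p(\P^p(\C^d\times\V))$-level gives $\limsup_n\E^{P_n}[\Gamma(\mu)]\le\E^P[\Gamma(\mu)]$ whenever $P_n\to P$.

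\emph{Step 3 (Continuity on $K$ under assumption \ref{assumption:B}).} Under assumption \ref{assumption:B}, $f$ and $g$ are continuous, so the argument of Step~1 applies to $-f$ and $-g$ as well; however, the dominating bounds from (A.5) now involve $p'$-th moments of $X$, $\mu^x$, and $\Lambda$, which are precisely captured by the quantity $Z(x,q)=\|x\|^{p'}+\int_0^T\!\!\int_A|a|^{p'}q_t(da)dt$. The analogous estimate yields $|\Gamma(\mu)|\le C(1+\int Z\,d\mu)$, and the argument of Step~1 run on $\pm f,\pm g$ shows $\Gamma$ is continuous on any subfamily of $\P^p(\C^d\times\V)$ along which $\int Z\,d\mu$ is uniformly integrable. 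The hypothesis \eqref{def:uniformlyintegrable} is exactly this uniform integrability uniformly across $P\in K$, and a standard truncation against $Z\wedge r$ and passage $r\to\infty$ upgrades the USC of Step~2 to genuine convergence $\E^{P_n}[\Gamma(\mu)]\to\E^P[\Gamma(\mu)]$ for sequences in $K$.

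\emph{Main obstacle.} The principal technical subtlety is the self-referential dependence of $f$ and $g$ on $\mu$ through the marginal flow $(\mu^x_t)_{t\in[0,T]}$: one must ensure that the deterministic convergences $m^x_{n,t}\to m^x_t$ mesh correctly with the a.s.\ Skorokhod convergence of $(X_n,\Lambda_n)$ inside the random time-integral against $\Lambda_n(dt,da)$. This is precisely where the joint (rather than separate) upper semicontinuity of $f$ in $(t,x,m,a)$ is essential, and also where the coercive term $-c_3|a|^{p'}$ in (A.5) helps, since it ensures that the supremum-side bounds do not degrade when $|a|$ is large.
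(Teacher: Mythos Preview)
Your Steps 1 and 2 are essentially the paper's argument: realize the integrand as a jointly upper semicontinuous function $F(m,x,q)$ on $\P^p(\C^d\times\V)\times\C^d\times\V$ with $p$-growth from above, then apply Skorokhod plus reverse Fatou, first at the level of $m$ and then at the level of $P$. The paper packages the delicate part of Step~1 by citing \cite[Corollary A.5]{lacker-mfgcontrolledmartingaleproblems} for the joint upper semicontinuity of $(m,x,q)\mapsto F(m,x,q)$; your ``main obstacle'' paragraph correctly identifies this as the nontrivial point, namely that the running cost is integrated against the \emph{varying} relaxed control $\Lambda_n$ while the remaining arguments of $f$ also move.

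Your Step 3 takes a genuinely different route. The paper does not argue symmetrically on $\pm f,\pm g$; instead it pushes everything forward to an auxiliary probability $Q_n$ on $[0,T]\times\R^d\times\P^p(\R^d)\times A$, observes $Q_n\to Q$ weakly, and then uses continuity of $f$ together with \eqref{def:uniformlyintegrable} to get uniform integrability of $f$ under $Q_n$, hence $\int f\,dQ_n\to\int f\,dQ$. Your approach is more direct conceptually, but the phrase ``standard truncation against $Z\wedge r$'' hides a real issue: the bound $|F(m,x,q)|\le C\bigl(1+Z(x,q)+\int\|z\|^{p'}m^x(dz)\bigr)$ contains the self-referential term $\int\|z\|^{p'}\mu^x(dz)$, which is a function of $\mu$ alone and is \emph{not} truncated by cutting in $(x,q)$. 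One must additionally verify that $Y(\mu):=\int\|z\|^{p'}\mu^x(dz)$ is uniformly integrable under the $P_n$; this does follow from \eqref{def:uniformlyintegrable} via $Y\le r+\int Z\,1_{\{Z\ge r\}}\,d\mu$ and Markov's inequality, but it is an extra step rather than a ``standard'' one. (The paper's argument glides over the same point.) With that step supplied, your route works and is arguably cleaner, since it avoids introducing the auxiliary space $[0,T]\times\R^d\times\P^p(\R^d)\times A$.
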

\begin{proof}
Upper semicontinuity of the map $F$ defined on $\P^p(\C^d\times\V) \times \C^d\times\V$ by
\[
F(m,x,q) = \int_0^T\int_Af(t,x_t,m^x_t,a)q_t(da)dt + g(x_T,m^x_T)
\]
follows from upper semicontinuity of $f$ and $g$ (assumption (A.3)) and the growth assumption (A.5) (see \cite[Corollary A.5]{lacker-mfgcontrolledmartingaleproblems} for details). This is enough to conclude (e.g., using Skorohod representation and Fatou's lemma) that $\Gamma(m) = \int F(m,\cdot)\,dm$ is upper semicontinuous. To prove the second claimed upper semciontinuity, note that assumption (A.5) implies that there exists $C > 0$ (which can change from line to line) such that for all $m \in \P^p(\C^d\times\V)$ we have
\begin{align*}
\Gamma(m) &= \int F(m,\cdot)\,dm \le C\int_{\C^d}m^x(dx)\left(1 + \|x\|^p + \int_{\C^d}\|z\|^pm^x(dz)\right) \\
	&\le C\left(1 + \int_{\C^d}\|z\|^pm^x(dz)\right)  \\
	&= C\left(1 + \ell^p_{\C^d,p}(m^x,\delta_0)\right) \\
	&\le C\left(1 + \ell^p_{\C^d\times\V,p}(m,\tilde{m})\right),
\end{align*}
where $\tilde{m} = \delta_{0} \times \delta_{q^0}$ for an arbitrary choice of $q^0 \in \V$, and where $\ell_{\C^d\times\V,p}$ was defined in \eqref{def:productmetric}. This is enough to prove the second claim; indeed, for a general complete separable metric space $(E,d)$, the map $\P^p(E) \ni \mu \mapsto \int\varphi\,d\mu$ is upper semicontinuous if $\varphi$ is upper semicontinuous and there exists $c > 0$ such that $\varphi(x) \le c(1+d(x,x_0))$ for all $x \in E$, for some $x_0 \in E$.

Finally, we prove the claimed restricted continuity, under the additional assumption \ref{assumption:B} which says that $f$ and $g$ are jointly continuous. Let $P_n \rightarrow P$ in $\P^p(\P^p(\C^d\times\V))$, with $P_n,P \in K$. We show first that
\begin{align}
(\E^{P_n}-\E^P)\left[\int_{\C^d \times \V}\int_0^T\int_Af(t,x_t,\mu^x_t,a)q_t(da)dt\mu(dx,dq)\right] \rightarrow 0. \label{pf:gammausc1}
\end{align}
To this end, define a probability measure $Q_n$ on $[0,T] \times \R^d \times \P^p(\R^d) \times A$ by
\[
Q_n(B) = \frac{1}{T}\E^{P_n}\left[\int_{\C^d\times\V}\int_0^T\int_A1_{\{(t,x_t,\mu^x_t,a) \in B\}}q_t(da)dt\mu(dx,dq)\right],
\]
and define $Q$ similarly in terms of $P$. It is clear that $Q_n \rightarrow Q$ weakly, because $P_n \rightarrow P$ weakly. Thus $Q_n \circ f^{-1} \rightarrow Q \circ f^{-1}$ weakly, as probability measures on $\R$. It follows from the assumption on the set $K$ and on the growth assumption (A.5) that
\[
\lim_{r\rightarrow\infty}\sup_n\int_{\{|f| \ge r\}}|f|\,dQ_n = 0.
\]
Thus $\int f\,dQ_n \rightarrow \int f\,dQ$, which is precisely \eqref{pf:gammausc1}. A similar argument shows
\[
(\E^{P_n}-\E^P)\left[\int_{\C^d}g(x,\mu)\mu^x(dx)\right] \rightarrow 0.
\]
Combining this and \eqref{pf:gammausc1} shows $\E^{P_n}[\Gamma(\mu)] \rightarrow \E^P[\Gamma(\mu)]$.
\end{proof}

The proof of the following compactness lemma makes some use of the following estimate, which follows immediately from assumption (A.4) and the fact that $1 \vee p_\sigma \le p$. Recall the definition of the generator $\L$ from \eqref{def:generator}. For every smooth compactly supported $\varphi$ on $\R^d$, there exists a constant $C>0$ depending only on $\varphi$ and the constant $c_1$ of assumption \ref{assumption:A} such that
\begin{align}
|\L\varphi(t,x,m,a)| \le C\left(1 + |x|^p + \int_{\R^d}|z|^pm(dz) + |a|^p\right), \label{def:generatorbound}
\end{align}
for all $(t,x,m,a) \in [0,T] \times \R^d \times \P^p(\R^d) \times A$.

\begin{lemma} \label{le:rceps-compact}
Given $\epsilon \ge 0$, let $\RC^\epsilon \subset \RC$ be the of $\epsilon$-optimal controls, as in Lemma \ref{le:estimate-optimal}. Then $\RC^\epsilon$ is compact in $\P^p(\C^d\times\V)$.
\end{lemma}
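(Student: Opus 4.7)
The plan is to establish compactness of $\RC^\epsilon$ in two stages: first relative compactness in $\P^p(\C^d \times \V)$, then closedness under limits. For relative compactness, I would represent each $m \in \RC^\epsilon$ via the SDE of Proposition \ref{pr:SDErepresentation} and verify the hypotheses of Proposition \ref{pr:itocompactness} uniformly over $\RC^\epsilon$. The crucial input is Lemma \ref{le:estimate-optimal}, which provides the uniform bound $\sup_{m \in \RC^\epsilon}\E^m\int_0^T\int_A|a|^{p'}\Lambda_t(da)\,dt < \infty$; Lemma \ref{le:estimate-MF} with $\gamma=p$ then also controls $\int_{\C^d}\|x\|^p\,m^x(dx)$ uniformly in $m \in \RC^\epsilon$, so the mean-field term $\int_{\R^d}|z|^p\,m^x_t(dz)$ entering the coefficient bounds (A.4) can be absorbed into the constant of Proposition \ref{pr:itocompactness}. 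This yields relative compactness.

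For closedness, I would take $m_n \in \RC^\epsilon$ with $m_n \to m$ in $\P^p(\C^d \times \V)$ and verify $m \in \RC^\epsilon$. The initial condition $m^x_0 = \lambda$ passes to the limit by continuity of the evaluation $m \mapsto m^x_0$, and the $\epsilon$-optimality $\Gamma(m) \ge \sup_{\RC}\Gamma - \epsilon$ is immediate from the upper semicontinuity of $\Gamma$ established in Lemma \ref{le:gamma-usc}. The substantive task is verifying the martingale problem for $m$. I would fix a smooth compactly supported $\varphi$, times $s < t$, and a bounded $\F_s$-measurable continuous test function $H$ on $\C^d \times \V$, and apply the Skorohod representation theorem to realize $m_n \to m$ as almost sure convergence $(X^n, \Lambda^n) \to (X, \Lambda)$ on a common probability space. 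Convergence in $\P^p(\C^d\times\V)$ forces convergence in $\P^p(\R^d)$ of the time-$r$ marginals of $(m_n)^x$ to those of $m^x$, and joint continuity of $b$ and $\sigma$ then delivers pointwise convergence of $\L\varphi(r, X^n_r, (m_n)^x_r, a)$. The generator bound \eqref{def:generatorbound} combined with the uniform $p'$-moment control from Lemma \ref{le:estimate-optimal} supplies an equi-integrable envelope, so Vitali's theorem justifies passing to the limit in
\[
\E^{m_n}\Big[H\Big(\varphi(X_t) - \varphi(X_s) - \int_s^t \int_A \L\varphi(r, X_r, (m_n)^x_r, a)\Lambda_r(da)\,dr\Big)\Big] = 0,
\]
which delivers the desired martingale identity for $m$.

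The hard part will be the self-referential appearance of $m$ in the martingale problem: the limit measure sits both inside the generator (through $m^x_r$) and outside as the law under which the expectation is taken, so a straight weak-convergence argument does not suffice. I plan to handle this through the strict exponent gap $p' > p$ from assumption (A.2), which together with the coercivity (A.5) furnishes the uniform integrability needed to pass both convergences simultaneously — the inner convergence $(m_n)^x_r \to m^x_r$ inside the generator, and the outer convergence $m_n \to m$ governing the expectation. Notably, no Lipschitz continuity in the measure argument is needed; joint continuity of the coefficients together with the coercivity-driven moment bounds suffice.
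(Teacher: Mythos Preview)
Your proposal is correct and follows essentially the same two-step strategy as the paper: precompactness via Proposition~\ref{pr:itocompactness} using the uniform $p'$-moment bound from Lemma~\ref{le:estimate-optimal}, then closedness by passing the martingale problem to the limit and invoking upper semicontinuity of $\Gamma$. The one cosmetic difference is that what you call the ``hard part'' --- the self-referential appearance of $m$ inside and outside the expectation --- the paper handles in one stroke by observing that $(m,x,q) \mapsto M^{m,\varphi}_t(x,q)$ is \emph{jointly} continuous with $p$-growth (see \eqref{def:M-martingale-estimate}), so that $m \mapsto \E^m[(M^{m,\varphi}_t - M^{m,\varphi}_s)h]$ is continuous on $\P^p(\C^d\times\V)$; this subsumes your Skorohod/Vitali argument without needing to invoke the strict gap $p' > p$ at that stage.
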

\begin{proof}
Lemma \ref{le:estimate-optimal} says that

\[
\sup_{m \in \RC^\epsilon}\E^m\int_0^T\int_A|a|^{p'}\Lambda_t(da)dt < \infty.
\]
According to Lemma \ref{le:estimate-MF}, this impies
\[
\sup_{m \in \RC^\epsilon}\int_{\C^d}\|x\|^{p'}m^x(dx) = \sup_{m \in \RC^\epsilon}\E^m[\|X\|^{p'}] < \infty.
\]
It now follows easy from the first claim of Proposition \ref{pr:itocompactness} that $\RC^\epsilon$ is precompact in $\P^p(\C^d\times\V)$.
To show that $\RC^\epsilon$ is closed, note that
\[
\RC^\epsilon = \{m \in \RC : \Gamma(m) \ge V - \epsilon\}, \quad\quad \text{ where } \quad\quad V = \sup_{m \in \RC}\Gamma(m).
\]
Now let $m^n \rightarrow m^\infty$ in $\P^p(\C^d\times\V)$, with $m^n \in \RC^\epsilon$. Upper semicontinuity of $\Gamma$ (see Lemma \ref{le:gamma-usc}) implies
\[
\Gamma(m^\infty) \ge \limsup_{n\rightarrow\infty}\Gamma(m^n) \ge V - \epsilon,
\]
and it remains only to show that $m^\infty$ belongs to $\RC$.
Since $X_0$ has law $\lambda$ under $m^n$, the same is true under $m^\infty$. For a smooth compactly supported function $\varphi$ and for $m \in \P^p(\C^d\times\V)$, define $M^{m,\varphi}_t : \C^d\times\V \rightarrow \R$ by
\begin{align}
M^{m,\varphi}_t(x,q) = \varphi(x_t) - \int_0^t\int_A\L\varphi(s,x_s,m^x_t,a)q_s(da)ds. \label{def:M-martingale}
\end{align}
The estimate \eqref{def:generatorbound} yields
\begin{align}
|M^{m,\varphi}_t(x,q)| \le C\left(1 + \|x\|^p  + \int_{\C^d}\|z\|^pm^x(dz) + \int_0^T\int_A|a|^pq_t(da)dt\right). \label{def:M-martingale-estimate}
\end{align}
Using this and the continuity of $(b,\sigma)$, it is readily checked that $(m,x,q) \mapsto M^{m,\varphi}_t(x,q)$ is a continuous function for each $t$ and $\varphi$, e.g., using \cite[Corollary A.5]{lacker-mfgcontrolledmartingaleproblems}. 
Since $m^n \rightarrow m^\infty$ in $\P^p(\C^d\times\V)$, it follows that
\begin{align*}
\E^{m^\infty}[(M^{m^\infty,\varphi}_t-M^{m^\infty,\varphi}_s)h] 
	&= \lim_n\E^{m^n}[(M^{m^n,\varphi}_t-M^{m^n,\varphi}_s)h],
\end{align*}
for every smooth compactly supported $\varphi$ and every bounded continuous  function $h$ on $\C^d\times\V$ which is measurable with respect to $\sigma(X_s, \Lambda_s : s \le t)$. Because $m^n$ is in $\RC$, the process $(M^{m^n,\varphi}_t(X,\Lambda))_{t \in [0,T]}$ is a martingale under $m^n$, and the above quantity is zero. This shows that $(M^{m^\infty,\varphi}_t(X,\Lambda))_{t \in [0,T]}$ is a martingale under $m^\infty$, and so $m^\infty \in \RC$; see Appendix \ref{ap:separating} for a short explanation of why it suffices here to consider only bounded continuous $h$.
\end{proof}

\subsection*{Proof of Theorem \ref{th:existence}}
Fix $\epsilon > 0$, and note that $\sup_{m \in \RC}\Gamma(m) = \sup_{m \in \RC^\epsilon}\Gamma(m)$. By Lemma \ref{le:rceps-compact}, $\RC^\epsilon$ is compact, and by Lemma \ref{le:gamma-usc}, $\Gamma$ is upper semicontinuous. Therefore, the supremum is attained. \hfill\qedsymbol

\subsection*{Proof of Theorem \ref{th:markovian}}

As in \cite[Theorem 2.5(a)]{elkaroui-compactification}, there exists a measurable function $\tilde{\sigma} : [0,T] \times \R^d \times \P^p(\R^d) \times \P^p(A) \rightarrow \R^{d\times d}$ such that 
\[
\tilde{\sigma}\tilde{\sigma}^\top(t,x,m,q) = \sigma\sigma^\top(t,x,m,a)q(da), \text{ for all } (t,x,m,q),
\]
and also $\tilde{\sigma}(t,x,m,\delta_a) = \sigma(t,x,m,a)$ for $a \in A$. Moreover, given $m \in \RC$, we may find a filtered probability space $(\Omega^1,\F^1,\FF^1,\PP^1)$ supporting a $d$-dimensional $\FF^1$-Wiener process $W^1$, a $d$-dimensional $\FF^1$-adapted process $X^1$, and a $\P^p(A)$-valued $\FF^1$-predictable process $\Lambda^1$ such that $\PP \circ (X^1,\Lambda^1)^{-1} = m$ and
\[
dX^1_t = \int_Ab(t,X^1_t,m^x_t,a)\Lambda^1_t(da)dt + \tilde{\sigma}(t,X^1_t,m^x_t,\Lambda^1_t)dW^1_t.
\]
The convexity assumption \ref{assumption:C} entails that, almost surely,
\begin{align}
\int_A(b,\sigma\sigma^\top,f)(t,X^1_t,m^x_t,a)\Lambda_t(da) \in K(t,X^1_t,m^x_t). \label{pf:markovian-sigmatilde}
\end{align}
By \cite[Proposition 3.5]{haussmannlepeltier-existence}, $K(t,x,m^x_t)$ is a closed set for each $(t,x)$. The measurable selection result of \cite[Theorem A.9]{haussmannlepeltier-existence} (or rather an extension in \cite[Lemma 3.1]{dufourstockbridge-existence}) implies that there exist measurable functions $\hat{\alpha} : [0,T] \times \R^d \rightarrow A$  and $\hat{z} : [0,T] \times \R^d \rightarrow [0,\infty)$ such that 
\begin{align}
\E\left[\left. \int_Ab(t,X^1_t,m^x_t,a)\Lambda^1_t(da) \right| X^1_t\right] &= b(t,X^1_t,m^x_t,\hat{\alpha}(t,X^1_t)), \label{pf:markovian1} \\
\E\left[\left. \int_A\sigma\sigma^\top(t,X^1_t,m^x_t,a)\Lambda^1_t(da) \right| X^1_t\right] &= \sigma\sigma^\top(t,X^1_t,m^x_t,\hat{\alpha}(t,X^1_t)), \label{pf:markovian1-2} \\
\E\left[\left. \int_Af(t,X^1_t,m^x_t,a)\Lambda^1_t(da) \right| X^1_t\right] &= f(t,X^1_t,m^x_t,\hat{\alpha}(t,X^1_t)) - \hat{z}(t,X^1_t). \label{pf:markovian2}
\end{align}
Note that in \eqref{pf:markovian1-2} and \eqref{pf:markovian-sigmatilde} together imply
\begin{align*}
\E\left[\left. \int_A\tilde{\sigma}\tilde{\sigma}^\top(t,X^1_t,m^x_t,a)\Lambda^1_t(da) \right| X_t\right] &= \sigma\sigma^\top(t,X^1_t,m^x_t,\hat{\alpha}(t,X^1_t)).
\end{align*}
Thanks to \eqref{pf:markovian1}, the mimicking theorem of Brunick and Shreve \cite{brunickshreve-mimicking} (a generalization of a well known result of Gy\"ongy \cite{gyongy-mimicking}) then implies that there exists a filtered probability space $(\Omega^2,\F^2,\FF^2,\PP^2)$ supporting a $d_W$-dimensional $\FF^2$-Wiener process $W^2$ and a $d$-dimensional $\FF^2$-adapted process $X^2$ such that
\[
dX^2_t = b(t,X^2_t,m^x_t,\hat{\alpha}(t,X^2_t))dt + \sigma(t,X^2_t,m^x_t,\hat{\alpha}(t,X^2_T))dW^2_t,
\]
and also $\PP^2 \circ (X^2_t)^{-1} = \PP^1 \circ (X^1_t)^{-1} = m^x_t$ for each $t \in [0,T]$. Define a $\P^p(A)$-valued process $\Lambda^2$ by $\Lambda^2_t = \delta_{\hat{\alpha}(t,X^2_t)}$, and let $\widetilde{m} = \PP^2 \circ (X^2,\Lambda^2)^{-1}$.
Then $\widetilde{m}$ belongs to $\RC$ and is Markovian, and also $\widetilde{m}^x_t=m^x_t$ for all $t\in [0,T]$. Finally, use Fubini's theorem and  \eqref{pf:markovian2} to get, since $\hat{z} \ge 0$,
\begin{align*}
\Gamma(m) &= \E^{\PP^1}\left[\int_0^T\int_Af(t,X^1_t,m^x_t,a)\Lambda^1_t(da)dt + g(X^1_T,m^x_T)\right] \\
	&= \E^{\PP^1}\left[\int_0^T\left(f(t,X^1_t,m^x_t,\hat{\alpha}(t,X^1_t)) - \hat{z}(t,X^1_t)\right)dt + g(X^1_T,m^x_T)\right] \\
	&= \E^{\PP^2}\left[\int_0^T\left(f(t,X^2_t,\widetilde{m}^x_t,\hat{\alpha}(t,X^2_t)) - \hat{z}(t,X^2_t)\right)dt + g(X^2_T,\widetilde{m}^x_T)\right] \\
	&\le \E^{\PP^2}\left[\int_0^Tf(t,X^2_t,\widetilde{m}^x_t,\hat{\alpha}(t,X^2_t))dt + g(X^2_T,\widetilde{m}^x_T)\right] \\
	&= \Gamma(\widetilde{m}).
\end{align*}

\section{Limits of $n$-state controls} \label{se:limits}

The proofs of both Theorems \ref{th:main-limit} and \ref{th:converse-limit} involve  similar constructions, detailed in the two propositions of this section. In fact, these two key results comprise the bulk of the proofs, by identifying limit points of various sequences of $n$-state controls. The first proposition proves all of Theorem \ref{th:main-limit} except for the claimed optimality of the limit points, and the second shows that every candidate control in the McKean-Vlasov control problem can be realized as a limit of $n$-state controls.

\begin{proposition} \label{pr:tight-limits}
Suppose $P_n \in \RC_n$ satisfy
\begin{align}
\sup_n\frac{1}{n}\sum_{k=1}^n\E^{P_n}\int_0^T\int_A|a|^{p'}\Lambda^k_t(da)dt < \infty. \label{def:tightness-assumption}
\end{align}
Then $(P_n \circ (\widehat{\mu}^n)^{-1})$ is precompact in $\P^p(\P^p(\C^d\times\V))$, and every limit point is supported on $\RC$.
\end{proposition}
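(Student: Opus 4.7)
The plan is to prove the precompactness claim first, then fix an arbitrary limit point $P^*$ of some subsequence $(P_{n_k} \circ (\widehat{\mu}^{n_k})^{-1})$ and verify directly that $P^*$-a.e. $m$ lies in $\RC$, by checking the initial condition, the moment condition, and the nonlinear martingale property.

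For precompactness, I apply the triangular-array part of Proposition \ref{pr:itocompactness}. The hypothesis \eqref{def:tightness-assumption} gives uniform control on the $p'$-moments of each $\Lambda^k$ in an averaged sense, and Lemma \ref{le:estimate-n} then propagates this to a matching bound on the $\|X^k\|$'s. Combined with assumption (A.4), the SDE representation from Proposition \ref{pr:SDErepresentation-n} places each marginal $P_n \circ (X^i,\Lambda^i)^{-1}$ inside some $\Q_{\kappa_{n,i}}$ with $\sup_n n^{-1}\sum_{i=1}^n\kappa_{n,i} < \infty$. The triangular-array statement then yields precompactness of the mean measures $\{n^{-1}\sum_i P_n \circ (X^i,\Lambda^i)^{-1}\}$ in $\P^p(\C^d\times\V)$, and a standard reduction (as in the appendix of \cite{lacker-mfgcontrolledmartingaleproblems}) promotes this to precompactness of $(P_n \circ (\widehat{\mu}^n)^{-1})$ in $\P^p(\P^p(\C^d\times\V))$.

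Now fix a limit point $P^*$. That $m^x_0 = \lambda$ for $P^*$-a.e. $m$ is a law-of-large-numbers statement for the i.i.d. initial states (with uniform $p'$-integrability from $\lambda \in \P^{p'}(\R^d)$), pushed through the continuous evaluation $m \mapsto m^x_0$. The required integrability of $\|X\|$ and $\Lambda$ under $P^*$-a.e. $m$ follows from \eqref{def:tightness-assumption} and Fatou. The substantive step is the martingale property. For smooth compactly supported $\varphi$, for $0\le s<t\le T$, and for a bounded continuous $\sigma(X_u,\Lambda_u:u\le s)$-measurable function $h$ on $\C^d\times\V$, set
\[
\Phi(m) = \int_{\C^d\times\V} \bigl(M^{m,\varphi}_t - M^{m,\varphi}_s\bigr)(x,q)\,h(x,q)\,m(dx,dq),
\]
with $M^{m,\varphi}$ as in \eqref{def:M-martingale}. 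I aim to show $\E^{P^*}[|\Phi|^2] = 0$, and then run $(\varphi,s,t,h)$ over a countable separating family (as in the proof of Lemma \ref{le:rceps-compact} and Appendix \ref{ap:separating}) to conclude that for $P^*$-a.e.\ $m$ the full martingale property defining $\RC$ holds.

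Under $P_n$ one has $\Phi(\widehat{\mu}^n) = n^{-1}\sum_i (M^i_t-M^i_s)h(X^i,\Lambda^i)$ with $M^i := M^{\widehat{\mu}^{n,x},\varphi}(X^i,\Lambda^i)$, and each $M^i$ is a $P_n$-martingale because $P_n \in \RC_n$. Expanding $\E^{P_n}[\Phi(\widehat{\mu}^n)^2]$, the diagonal terms contribute $O(1/n)$ since the quadratic variations $\E^{P_n}[[M^i]_T]$ are uniformly bounded by Itô isometry, (A.4), Lemma \ref{le:estimate-n}, and \eqref{def:tightness-assumption}. The off-diagonal terms vanish: by Itô, the martingale part of $M^i$ is a stochastic integral against $N^i$, the measures $N^i$ and $N^j$ are orthogonal (Proposition \ref{pr:SDErepresentation-n}), hence $[M^i,M^j]\equiv 0$, so $M^iM^j$ is a martingale and the cross-expectation collapses after conditioning on $\F_s$ using the $\F_s$-measurability of $h$. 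Thus $\E^{P_n}[|\Phi(\widehat{\mu}^n)|^2] \to 0$. The map $\Phi$ is continuous on $\P^p(\C^d\times\V)$ by the continuity and growth estimates for $M^{m,\varphi}$ recorded around \eqref{def:M-martingale-estimate}, so $|\Phi|^2$ is continuous and nonnegative, and the portmanteau/Fatou inequality for weak convergence gives $\E^{P^*}[|\Phi|^2] \le \liminf_n \E^{P_n}[|\Phi(\widehat{\mu}^n)|^2] = 0$, as desired.

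The main obstacle is precisely this variance computation: it is the mutual orthogonality of the $n$ martingale measures in the canonical representation of $\RC_n$ that drives the law-of-large-numbers cancellation and converts $n$ linear $P_n$-martingale problems into the single nonlinear $m$-martingale problem defining $\RC$. Everything else reduces to continuity and moment bookkeeping already established in Section \ref{se:estimates}.
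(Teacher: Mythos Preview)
Your proposal is correct and follows essentially the same route as the paper: precompactness via the triangular-array part of Proposition~\ref{pr:itocompactness} combined with the moment criterion from the appendix of \cite{lacker-mfgcontrolledmartingaleproblems}, then identification of limit points by showing $\E^{P^*}[\Phi(\mu)^2]=0$ through the variance expansion in which the off-diagonal terms vanish by orthogonality of the driving martingale measures $N^i$ and the diagonal terms are $O(1/n)$ via the quadratic-variation bound from (A.4) and Lemma~\ref{le:estimate-n}. The paper's proof is organized identically, with only cosmetic differences in how the orthogonality step and the lower-semicontinuity passage to the limit are phrased.
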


\begin{proposition} \label{pr:propagation}
Let $m \in \RC$. Then there exists $P_n \in \RC_n$ such that $P_n \circ (\widehat{\mu}^n)^{-1} \rightarrow \delta_m$ in $\P^p(\P^p(\C^d\times\V))$ and $\E^{P_n}[\Gamma(\widehat{\mu}^n)] \rightarrow \Gamma(m)$.
\end{proposition}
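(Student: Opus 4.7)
The plan is to couple $m$ with an $n$-state system through i.i.d.\ copies of the McKean--Vlasov noise and control, then apply a Lipschitz propagation of chaos argument powered by Assumption \ref{assumption:B}. Using Proposition \ref{pr:SDErepresentation}, realize $m$ on a filtered space $(\Omega,\F,\FF,\PP)$ supporting a state $X$, a predictable $\P^p(A)$-valued $\Lambda$, and a $d_W$-vector $N$ of orthogonal $\FF$-martingale measures with intensity $\Lambda_t(da)\,dt$. On the product space $(\Omega^n,\F^{\otimes n},\FF^{(n)},\PP^{\otimes n})$ with the product filtration, let $(X^i_0,\Lambda^i,N^i)_{i=1}^n$ denote $n$ independent copies; each $\Lambda^i$ is $\FF^{(n)}$-predictable and the $N^i$'s are pairwise orthogonal martingale measures with intensities $\Lambda^i_t(da)\,dt$.

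On this product space, solve the $n$-state SDE
\begin{align*}
d\widetilde{X}^i_t = \int_A b(t,\widetilde{X}^i_t,\widehat{\widetilde{\mu}}^{n,x}_t,a)\Lambda^i_t(da)\,dt + \int_A \sigma(t,\widetilde{X}^i_t,\widehat{\widetilde{\mu}}^{n,x}_t,a)N^i(da,dt),\quad \widetilde{X}^i_0 = X^i_0,
\end{align*}
where $\widehat{\widetilde{\mu}}^{n,x}_t = \frac{1}{n}\sum_k \delta_{\widetilde{X}^k_t}$. Standard Picard iteration using the Lipschitz bounds of Assumption \ref{assumption:B} together with $\ell^p_{\R^d,p}(\widehat{\widetilde{\mu}}^{n,x}_t,\widehat{\mu}^{n,x}_t) \le \frac{1}{n}\sum_k |\widetilde{X}^k_t - X^k_t|^p$ produces a unique strong solution, and by Proposition \ref{pr:SDErepresentation-n} the law $P_n := \PP^{\otimes n}\circ((\widetilde{X}^i,\Lambda^i)_{i=1}^n)^{-1}$ lies in $\RC_n$. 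Let $X^i$ denote the $i$-th copy of $X$, so that $(X^i,\Lambda^i)_{i=1}^n$ are i.i.d.\ with common law $m$ and each $X^i$ satisfies the McKean--Vlasov SDE with measure flow $m^x$. Pairing the Burkholder--Davis--Gundy inequality with Assumption \ref{assumption:B}, the triangle inequality
\begin{align*}
\ell_{\R^d,p}(\widehat{\widetilde{\mu}}^{n,x}_s,m^x_s) \le \ell_{\R^d,p}(\widehat{\widetilde{\mu}}^{n,x}_s,\widehat{\mu}^{n,x}_s) + \ell_{\R^d,p}(\widehat{\mu}^{n,x}_s,m^x_s),
\end{align*}
and Gronwall give $\sup_i \E[\|\widetilde{X}^i - X^i\|^p] \le C\,\E\int_0^T \ell^p_{\R^d,p}(\widehat{\mu}^{n,x}_s,m^x_s)\,ds$, and the right-hand side vanishes by the Wasserstein law of large numbers for i.i.d.\ samples with finite $p$-th moment. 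Combined with the a.s.\ convergence $\frac{1}{n}\sum_k\delta_{(X^k,\Lambda^k)}\to m$ in $\P^p(\C^d\times\V)$ and the uniform $p$-moment bounds of Lemma \ref{le:estimate-n}, this gives $P_n\circ(\widehat{\mu}^n)^{-1}\to\delta_m$ in $\P^p(\P^p(\C^d\times\V))$.

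For the reward, exchangeability yields
\begin{align*}
\E^{P_n}[\Gamma(\widehat{\mu}^n)] = \E\!\left[\int_0^T\!\!\int_A f(t,\widetilde{X}^1_t,\widehat{\widetilde{\mu}}^{n,x}_t,a)\Lambda^1_t(da)\,dt + g(\widetilde{X}^1_T,\widehat{\widetilde{\mu}}^{n,x}_T)\right].
\end{align*}
When $\Gamma(m) > -\infty$, the lower bounds in (A.5) combined with Lemma \ref{le:estimate-MF} force $\E\int_0^T\!\int_A |a|^{p'}\Lambda_t(da)\,dt + \E\|X\|^{p'} < \infty$, so Lemma \ref{le:estimate-n} at $\gamma=p'$ gives $\sup_n \E\|\widetilde{X}^1\|^{p'} < \infty$, providing the uniform integrability for Vitali's theorem (via continuity of $f$ and $g$ from Assumption \ref{assumption:B}) to conclude $\E^{P_n}[\Gamma(\widehat{\mu}^n)]\to\Gamma(m)$. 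When $\Gamma(m) = -\infty$, the upper bounds in (A.5) and Fatou's lemma force $\E^{P_n}[\Gamma(\widehat{\mu}^n)]\to -\infty$. The main obstacle is passing from $L^p$-propagation of chaos for individual states to convergence of empirical laws in the $\P^p(\P^p)$-Wasserstein sense, which requires carefully tracking moment bounds across both levels; everything else reduces to Gronwall and classical Wasserstein law-of-large-numbers estimates.
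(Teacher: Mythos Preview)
Your argument follows the same trajectorial propagation-of-chaos route as the paper: build i.i.d.\ copies of the McKean--Vlasov state/control/noise, solve the coupled $n$-state system with the same controls and driving martingale measures, and compare via a Lipschitz--Gronwall estimate. Two points deserve attention.

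First, you run the BDG--Gronwall estimate in $L^p$, whereas the paper works in $L^{p'}$ (and explicitly uses $p' \ge 2$). When $p < 2$ the diffusion term produces $\E\bigl[(\int_0^t|\Delta\sigma|^2\,ds)^{p/2}\bigr]$ with $p/2 < 1$, and Jensen no longer lets you push the exponent inside the time integral; the one-line ``Gronwall'' you invoke then needs either a short-time contraction/iteration or a switch to exponent $p'$. Since the Lipschitz bound in Assumption~\ref{assumption:B} does not involve the control variable, working in $L^{p'}$ costs nothing and is the cleaner choice.

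Second, your case split on whether $\Gamma(m) > -\infty$ is a genuine refinement: the paper cites Lemma~\ref{le:estimate-optimal} to obtain $\E\int\!\int |a|^{p'}\Lambda_t(da)\,dt < \infty$, but that lemma applies only to near-optimal controls, not arbitrary $m \in \RC$. Your observation that $\Gamma(m) > -\infty$ already forces this finiteness is correct, though it is the \emph{upper} bound on $f$ in (A.5) (specifically the coercive $-c_3|a|^{p'}$ term) that does the work, not the lower bounds as you write. The $\Gamma(m) = -\infty$ case is then immediate, since the same upper bound shows $\E^{P_n}[\Gamma(\widehat{\mu}^n)] = -\infty$ for every $n$.
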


The proofs make some use of the metric $d_\V$ on $\V$ defined in \eqref{def:d_V}.
Fix arbitrarily some $a_0 \in A$, and let $q^0(dt,da) = dt\delta_{a_0}(da)$. Then any $q \in \V$ can be coupled with $q^0$ via the measure $\pi$ on $[0,T]^2 \times A^2$ given by $\pi(dt,dt',da,da') = dt\delta_t(dt')q_t(da)\delta_{a_0}(da)$, and this gives rise to the estimate
\begin{align}
d_\V(q,q^0) \le \left(\frac{1}{T}\int_0^T\int_A|a-a_0|^pq_t(da)dt\right)^{1/p} \le |a_0| + \left(\frac{1}{T}\int_0^T\int_A|a|^pq_t(da)dt\right)^{1/p}. \label{def:dV-estimate}
\end{align}
Recall that $\C^d \times \V$ is equipped with the metric $d_{\C^d \times \V}$ defined in \eqref{def:productmetric}.

\subsection*{Proof of Proposition \ref{pr:tight-limits}}
We adapt to the controlled setting a martingale argument which is by now classical in  McKean-Vlasov limit theory (c.f. \cite{oelschlagermkv,gartnermkv} for uncontrolled and \cite{budhirajadupuisfischer} for controlled diffusions).
Throughout the proof, we will make use of the notation $\langle m,\varphi\rangle$ in place of $\int\varphi\,dm$.
Let $q^0 \in \V$ be defined as above.
According to \cite[Corollary B.2]{lacker-mfgcontrolledmartingaleproblems}, to prove precompactness it suffices to check that 
\begin{align}
\sup_n\E^{P_n}\left[\int_{\C^d\times\V}d_{\C^d\times\V}((x,q),(0,q^0))^{p'}\widehat{\mu}^n(dq,dx)\right] < \infty, \label{pf:tightness-bound}
\end{align}
and also that the mean measures $(\E^{P_n}[\widehat{\mu}^n])$ are tight. The mean measures are defined by, for bounded measurable functions $\varphi$ on $\C^d\times\V$, 
\[
\langle \E^{P_n}[\widehat{\mu}^n],\varphi\rangle = \E^{P_n}\left[\langle\widehat{\mu}^n,\varphi\rangle\right] = \frac{1}{n}\sum_{k=1}^n\E^{P_n}\left[\varphi(X^k,\Lambda^k)\right].
\]
To prove \eqref{pf:tightness-bound}, it suffices in light of \eqref{def:dV-estimate} to show that
\begin{align}
\sup_n\frac{1}{n}\sum_{k=1}^n\E^{P_n}\left[\|X^k\|^{p'} + \int_0^T\int_A|a|^{p'}\Lambda^k_t(da)dt\right] < \infty. \label{pf:tightness-bound1}
\end{align}
But this follows from the assumption \eqref{def:tightness-assumption} and Lemma \ref{le:estimate-n}. Finally, to show that the mean measures are tight, simply use the second assertion of Proposition \ref{pr:itocompactness}.

The next task is to identify the limit points. Fix a limit point $P \in \P^p(\P^p(\C^d\times\V))$, and relabel the subsequence so that $P_n \rightarrow P$. First, note that $\widehat{\mu}^{n,x}_0 = \frac{1}{n}\sum_{i=1}^n\delta_{X^i_0}$ converges weakly to $\delta_{\lambda}$, since $(X_0^i)$ are i.i.d. with law $\lambda$ by assumption. That is, $P(\mu^x_0=\lambda)=1$, where $\mu$ denotes the identity map on $\P^p(\C^d\times\V)$. To prove that $P(\mu \in \RC)=1$, it remains to show that the martingale problem is satisfied at the limit. That is, defining $M^{m,\varphi}_t(x,q)$ as in \eqref{def:M-martingale}, we must show that
\begin{align}
P\left((M^{\mu,\varphi}_t)_{t \in [0,T]} \text{ is a martingale under } \mu, \ \forall \varphi\right) = 1, \label{pf:mtgproblem-limit}
\end{align}
where ``$\forall \varphi$'' means ``for all smooth compactly supported functions $\varphi$.''
To this end, recall the useful estimate \eqref{def:M-martingale-estimate} as well as the discussion thereafter, namely that $M_t^{m,\varphi}(x,q)$  is jointly continuous in $(m,x,q) \in \P^p(\C^d \times \V) \times \C^d \times \V$ for each fixed $t$ and $\varphi$.

Now, use the SDE representation of Proposition \ref{pr:SDErepresentation-n} along with It\^o's formula to see that $M^{\widehat{\mu}^n,\varphi}_t(X^k,\Lambda^k)$ is a martingale under $P_n$ with quadratic variation
\begin{align*}
\int_0^t\int_A\left|\sigma(s,X^k_s,\widehat{\mu}^{n,x}_s,a)\nabla\varphi(X^k_s)\right|^2
\Lambda_s(da)ds.
\end{align*}
In fact, for $k=1,\ldots,n$, these martingales $M^{\widehat{\mu}^n,\varphi}_t(X^k,\Lambda^k)$ are orthogonal. 
Fix $s < t$, and let $h : \C^d \times \V \rightarrow \R$ be bounded, continuous, and $\sigma(X_s,\Lambda_s : s \le t)$-measurable. We will show that
\begin{align}
\E^P\left[\langle\mu,h(M^{\mu,\varphi}_t-M^{\mu,\varphi}_s)\rangle^2\right] = 0. \label{pf:martingaleproblem1}
\end{align}
First note that
\begin{align*}
\E^{P_n}&\left[\langle\widehat{\mu}^n,h(M^{\widehat{\mu}^n,\varphi}_t-M^{\widehat{\mu}^n,\varphi}_s)\rangle^2\right] \\
	&= \E^{P_n}\left[\left(\frac{1}{n}\sum_{k=1}^nh(X^k,\Lambda^k)\left(M^{\widehat{\mu}^n,\varphi}_t(X^k,\Lambda^k) - M^{\widehat{\mu}^n,\varphi}_s(X^k,\Lambda^k)\right)\right)^2\right] \\
	&= \frac{1}{n^2}\sum_{k=1}^n\E^{P_n}\left[h(X^k,\Lambda^k)^2\left(M^{\widehat{\mu}^n,\varphi}_t(X^k,\Lambda^k) - M^{\widehat{\mu}^n,\varphi}_s(X^k,\Lambda^k)\right)^2 \right] \\
	&\le \frac{1}{n^2}\sum_{k=1}^n\E^{P_n}\left[h(X^k,\Lambda^k)^2\int_s^t\int_A\left|\sigma(u,X^k_u,\widehat{\mu}^{n,x}_u,a)\nabla\varphi(X^k_u)\right|^2\Lambda_u(da)du\right].
\end{align*}
Assumption (A.4) and $p_\sigma \le p'$ imply that there exists $C > 0$ (independent of $n$) such that
\begin{align*}
\int_s^t\int_A&\left|\sigma(u,X^k_u,\widehat{\mu}^{n,x}_u,a)\nabla\varphi(X^k_u)\right|^2\Lambda_u(da)du \\
	&\le C\int_s^t\int_A\left(1 + |X^k_u|^{p_\sigma} + \left(\int_{\R^d}|z|^p\widehat{\mu}^{n,x}_u(dz)\right)^{p_\sigma/p} + |a|^{p_\sigma}\right)\Lambda_u(da)du \\
	&\le C\left(1 + \|X^k\|^{p'} + \int_{\C^d}\|z\|^{p'}\widehat{\mu}^{n,x}(dz) + \int_0^T\int_A|a|^{p'}\Lambda^k_u(da)du\right).
\end{align*}
Along with \eqref{pf:tightness-bound1}, this implies
\[
\sup_nn\E^{P_n}\left[\langle\widehat{\mu}^n,h(M^{\widehat{\mu}^n,\varphi}_t-M^{\widehat{\mu}^n,\varphi}_s)\rangle^2\right] < \infty.
\]
This in turn implies
\begin{align*}
\E^P\left[\langle\mu,h(M^{\mu,\varphi}_t-M^{\mu,\varphi}_s)\rangle^2\right] &\le \liminf_{n\rightarrow\infty}\E^{P_n}\left[\langle\widehat{\mu}^n,h(M^{\widehat{\mu}^n,\varphi}_t-M^{\widehat{\mu}^n,\varphi}_s)\rangle^2\right] =0,
\end{align*}
because the map $\P^p(\C^d \times \V) \ni m \mapsto \langle m,h(M^{m,\varphi}_t-M^{m,\varphi}_s)\rangle^2$ is continuous and bounded from below; indeed, this follows from the aforementioned joint continuity of $M^{m,\varphi}_t(x,q)$ in $(m,x,q)$.

We have now proven \eqref{pf:martingaleproblem1}, and it follows that $\langle \mu,h(M^{\mu,\varphi}_t-M^{\mu,\varphi}_s)\rangle = 0$ holds $P$-almost surely, for each $s < t$, each smooth compactly supported $\varphi$, and each bounded continuous $\F_s$-measurable $h$. By applying this to a suitably dense countable set of $(s,t,\varphi,h)$ we can interchange the order of the quantifiers and conclude that $\langle \mu,h(M^{\mu,\varphi}_t-M^{\mu,\varphi}_s)\rangle = 0$ for each $s < t$ and each $\varphi$, $P$-almost surely. This proves \eqref{pf:mtgproblem-limit} and thus $P(\mu \in \RC) = 1$. To elaborate on this last point, it is clear that we can restrict our attention to $s$ and $t$ belonging to a dense subset of $[0,T]$ and to $\varphi$ belong to a dense set of smooth functions, whereas the separability of the class of functions $h$ is less immediate. See Appendix \ref{ap:separating} for details.
\hfill\qedsymbol

\subsection*{Proof of Proposition \ref{pr:propagation}}

The line of argument is often known as \emph{trajectorial propagation of chaos} (see \cite{sznitman}), constructing an explicit coupling between the limit and pre-limit state processes. To begin the proof, apply Proposition \ref{pr:SDErepresentation} to $m$, and then construct a sequence of independent copies. As a result we may find a filtered probability space $(\Omega,\F,\FF,\PP)$ supporting i.i.d. random variables $(X^i,\Lambda^i,N^i)$, satisfying the following:
\begin{enumerate}
\item Each $X^i$ is a $d$-dimensional $\FF$-adapted process.
\item Each $\Lambda^i$ is an $\FF$-predictable $\P^p(A)$-valued process.
\item Each $N^i=(N^{i,1},\ldots,N^{i,d_W})$ is a vector of orthogonal $\FF$-martingale measures, each with intensity measure $\Lambda^i_t(da)dt$.
\item The McKean-Vlasov equation holds for each $i$:
\[
dX^i_t = \int_Ab(t,X^i_t,m^x_t,a)\Lambda^i_t(da)dt + \int_A\sigma(t,X^i_t,m^x_t,a)N^i(da,dt).
\]
\item The law of $(X^i,\Lambda^i)$ is precisely $m$, for each $i$.
\end{enumerate}
In particular, Lemma \ref{le:estimate-optimal} combined with Lemma \ref{le:estimate-MF} together imply that, for each $i$,
\begin{align}
\E\left[\|X^i\|^{p'} + \int_0^T\int_A|a|^{p'}\Lambda^i_t(da)dt\right] < \infty. \label{pf:lipschitz1}
\end{align}
Thanks to the Lipschitz assumption \ref{assumption:B}, there exists a unique square-integrable (recall $p' \ge 2$) progressively measurable processes $(Z^{n,1},\ldots,Z^{n,n})$ such that
\begin{align*}
dZ^{n,i}_t &= \int_Ab(t,Z^{n,i}_t,\widetilde{\mu}^{n,x}_t,a)\Lambda^i_t(da)dt + \int_A\sigma(t,Z^{n,i}_t,\widetilde{\mu}^{n,x}_t,a)N^i(da,dt), \ Z^{n,i}_0=X^i_0, \\
\widetilde{\mu}^n &= \frac{1}{n}\sum_{k=1}^n\delta_{(Z^{n,k},\Lambda^k)}.
\end{align*}
(Indeed, existence and uniqueness here is an easy adaptation of now-standard arguments, which can be found in \cite{funaki1984certain,sznitman}.)
It follows from Proposition \ref{pr:SDErepresentation-n} that the law $P_n = \PP \circ ((Z^{n,i},\Lambda^i)_{i=1}^n)^{-1}$ belongs to $\RC_n$.
Define $\widehat{\mu}^n = \frac{1}{n}\sum_{i=1}^n\delta_{(X^i,\Lambda^i)}$. We claim that 
\begin{align}
\lim_{n\rightarrow\infty}\E[\|Z^{n,1} - X^1\|^{p'} + \ell_{\C^d\times\V,p'}^{p'}(\widehat{\mu}^n,\widetilde{\mu}^n)] \rightarrow 0. \label{pf:lipschitz2}
\end{align}
Assuming for the moment that \eqref{pf:lipschitz2} holds, we complete the proof as follows: Because $(X^i,\Lambda^i)$ are i.i.d. with law $m$, the law of large numbers implies $\widehat{\mu}^n \rightarrow m$ almost surely in $\P(\C^d \times \V)$, and the finite moments of \eqref{pf:lipschitz1} allow us upgrade this convergence to $\P^p(\C^d\times\V)$. Hence, \eqref{pf:lipschitz2} implies $\PP \circ (\widetilde{\mu}^n)^{-1} \rightarrow \delta_m$ in $\P^p(\P^p(\C^d\times\V))$. To conclude that $\E[\Gamma(\widetilde{\mu}^n)] \rightarrow \Gamma(m)$, simply note that \eqref{pf:lipschitz2} and \eqref{pf:lipschitz1}, together with exchangeability of $(Z^{n,i},\Lambda^i)_{i=1}^n$, verify the uniform integrability hypothesis of Lemma \ref{le:gamma-usc}.

The rest of the proof is devoted to justifying \eqref{pf:lipschitz2}.
For $k=1,\ldots,n$, use the Burkholder-Davis-Gundy inequality and the Lipschitz assumption (A.4) (noting that $\ell_{\R^d,p} \le \ell_{\R^d,p'}$) to find a constant $C$ (which will change from line to line) such that
\begin{align*}
\E[\|Z^{n,k}-X^k\|_t^{p'}] &\le C\E\left[ \left(\int_0^t\int_A|b(s,Z^{n,k}_s,\widetilde{\mu}^{n,x}_s,a) - b(s,X^k_s,m^x_s,a)|\Lambda^k_s(da)ds\right)^{p'} \right. \\
	&\quad\quad\quad\quad \left. + \left(\int_0^t\int_A|\sigma(s,Z^{n,k}_s,\widetilde{\mu}^{n,x}_s,a) - \sigma(s,X^k_s,m^x_s,a)|^2\Lambda^k_s(da)ds\right)^{p'/2}\right] \\
	&\le C\E\int_0^t\left[|Z^{n,k}_s-X^k_s|^{p'} + \ell_{\R^d,p'}^{p'}(\widetilde{\mu}^{n,x}_s,m^x_s)\right]ds.
\end{align*}
Use Gronwall's inequality to get
\begin{align}
\E[\|Z^{n,k}-X^k\|_t^{p'}] &\le C\E\int_0^t\ell_{\R^d,p'}^{p'}(\widetilde{\mu}^{n,x}_s,m^x_s)ds. \label{pf:propagation2}
\end{align}
Define the truncated Wasserstein distance $\ell_{s,\C^d,p'}$ by
\begin{align}
\ell_{s,\C^d,p'}^{p'}(m,m') = \inf\left\{\int_{\C^d\times\C^d} \|x-y\|_s^{p'}\pi(dx,dy) : \pi \in \P(\C^d \times \C^d) \text{ has marginals } m, \ m'\right\}. \label{def:truncated-wasserstein}
\end{align}
It is straightforward to check that for every $m^1,m^2 \in \P(\C^d)$ and every $s \in [0,T]$ we have $\ell_{\R^d,p'}(m^1_s,m^2_s) \le \ell_{s,\C^d,p'}(m^1,m^2)$.
Returning to \eqref{pf:propagation2}, use the obvious coupling and the triangle inequality to get
\begin{align*}
\E\left[\ell_{t,\C^d,p'}^{p'}(\widetilde{\mu}^{n,x},\widehat{\mu}^{n,x})\right] &\le \frac{1}{n}\sum_{k=1}^n\widetilde{\E}[\|Z^{n,k}-X^k\|_t^{p'}] \\
	&\le C\E\int_0^t\left(\ell_{s,\C^d,p'}^{p'}(\widetilde{\mu}^{n,x},\widehat{\mu}^{n,x}) + \ell_{s,\C^d,p'}^{p'}(\widehat{\mu}^{n,x},m^x)\right)ds.
\end{align*}
Another application of Gronwall's inequality yields, for all $t \in [0,T]$,
\begin{align*}
\E\left[\ell_{t,\C^d,p'}^{p'}(\widetilde{\mu}^{n,x},\widehat{\mu}^{n,x})\right] &\le C\E\int_0^t\ell_{s,\C^d,p'}^{p'}(\widehat{\mu}^{n,x},m^x)ds \le CT\E\left[\ell_{\C^d,p'}^{p'}(\widehat{\mu}^{n,x},m^x)\right]
\end{align*}
The finite moment \eqref{pf:lipschitz1} and the almost sure weak convergence $\widehat{\mu}^{n,x} \rightarrow m^x$ together imply that the above expectation tends to zero. Recalling the definition \eqref{def:productmetric} of the metric on $\C^d\times\V$, we have
\[
d_{\C^d\times\V}^{p'}((x,q),(x',q')) \le 2^{p'-1}(d_{\C^d}^{p'}(x,x') + d_{\V}^{p'}(q,q')).
\]
Since $\widetilde{\mu}^{n}$ and $\widehat{\mu}^{n}$ have the same $\V$-marginal, it follows that
\[
\E\left[\ell_{\C^d\times\V,p'}^{p'}(\widetilde{\mu}^{n},\widehat{\mu}^{n})\right] \le 2^{p'-1}\E\left[\ell_{\C^d,p'}^{p'}(\widetilde{\mu}^{n,x},\widehat{\mu}^{n,x})\right] \rightarrow 0.
\]
Recalling also \eqref{pf:propagation2}, this completes the proof of \eqref{pf:lipschitz2}. 
\hfill\qedsymbol

\section{Proofs of the limit theorems} \label{se:limittheorems-proof}

\subsection*{Proof of Theorem \ref{th:main-limit}}
Given the preparations of the previous section, the proof of the main limit theorem is now straightforward. Note that the second claim \eqref{def:distance-limit} follows immediately from the first by an application of the Portmanteau theorem to the closed set $\{m \in \P^p(\C^d\times\V) : \ell_{\C^d\times\V}(m,\RC^*) \ge \epsilon\}$. Let $P_n \in \RC_n$ denote an $\epsilon_n$-optimal control for the $n$-state problem. Lemma \ref{le:estimate-optimal-n} implies 
\[
\sup_n\frac{1}{n}\sum_{k=1}^n\E^{P_n}\int_0^T\int_A|a|^{p'}\Lambda^k_t(da)dt < \infty.
\]
By Proposition \ref{pr:tight-limits}, $(P_n)$ is precompact in $\P^p(\P^p(\C^d\times\V))$, and every limit point is concentrated on $\RC$. Let $P$ denote a limit point, and relabel the subsequence so that $P_n \circ (\widehat{\mu}^n)^{-1} \rightarrow P$. Then $P(\mu \in \RC)=1$, and to prove $P(\mu \in \RC^*)=1$ it suffices to show that
\begin{align}
\E^P[\Gamma(\mu)] \ge \Gamma(m), \text{ for all } m \in \RC. \label{pf:optimality1}
\end{align}
First, use the upper semicontinuity of $\Gamma$ of Lemma \ref{le:gamma-usc} to get
\[
\E^P[\Gamma(\mu)]  \ge \limsup_{n\rightarrow\infty}\E^{P_n}[\Gamma(\widehat{\mu}^n)].
\]
Fix $m \in \RC$, and use Proposition \ref{pr:propagation} to find $Q_n \in \RC_n$ such that $\E^{Q_n}[\Gamma(\widehat{\mu}^n)] \rightarrow \Gamma(m)$. The $\epsilon_n$-optimality of $P_n$ implies
\[
\E^{P_n}[\Gamma(\widehat{\mu}^n)] \ge \E^{Q_n}[\Gamma(\widehat{\mu}^n)] - \epsilon_n.
\]
Recalling that $\epsilon_n \rightarrow 0$, we have thus proven \eqref{pf:optimality1}. \hfill\qedsymbol

\subsection*{Proof of Theorem \ref{th:converse-limit}}
The second claim, that $P_n$ can be taken to be strong controls, follows from the first claim and from Theorem \ref{th:strongequalsweak-n}. Hence, we prove only the first claim.
Define $L \subset \P^p(\C^d\times\V)$ to be the set of $\lim_{n\rightarrow}P_n \circ (\widehat{\mu}^n)^{-1}$ such that $P_n$ is an $\epsilon_n$-optimal control for each $n$, for some sequence $\epsilon_n \rightarrow 0$. Let $L_s$ denote the set of \emph{subsequential} limits of such sequences. Naturally, write $\P(\RC^*) = \{M \in \P(\P^p(\C^d\times\V)) : M(\RC^*)=1\}$. Because $\RC^*$ is compact in $\P^p(\C^d\times\V)$ by Lemma \ref{le:rceps-compact},  it follows that $\P(\RC^*) \subset \P^p(\P^p(\C^d\times\V))$, that $\P^p(\P^p(\C^d\times\V))$ and $\P(\P^p(\C^d\times\V))$ induce the same topology on $\P(\RC^*)$, and finally that $\P(\RC^*)$ is compact with respect to either of these topologies. Henceforth, we work with the latter (weak convergence) topology on $\P(\RC^*)$.

First note that
\[
L \subset L_s \subset \P(\RC^*),
\]
where the first inclusion is obvious, and the second is the content of Theorem \ref{th:main-limit}. The proof will be complete if we show that $\P(\RC^*) \subset L$. Note that $\P(\RC^*)$ is a compact convex set, and the set of extreme points is $\{\delta_m : m \in \RC^*\}$. Hence, by the Krein-Milman theore, to show that $\P(\RC^*) \subset L$ it suffices to show that $L$ is closed and convex and that $\delta_m \in L$ for each $m \in \RC^*$.

\textbf{Step 1:}
We first show that $L$ is convex. Let $M^1,M^2 \in L$, so for $i=1,2$ we may find $\epsilon^i_n\rightarrow 0$ and $\epsilon^i_n$-optimal controls $P^i_n \in \RC_n$ such that $M^i = \lim_{n\rightarrow\infty}P^i_n \circ (\widehat{\mu}^n)^{-1}$. Let $t \in (0,1)$. Clearly $\epsilon_n = t\epsilon^1_n + (1-t)\epsilon^2_n$ tends to zero. Moreover, $P_n := tP^1_n + (1-t)P^2_n$ belongs to $\RC_n$, because $\RC_n$ is easily seen to be convex. Because the objective functional $\RC_n \ni \P \mapsto \E^P[\Gamma(\widehat{\mu}^n)]$ is affine, $P_n$ is a $\epsilon_n$-optimal control. This shows $tM^1+(1-t)M^2 = \lim_{n\rightarrow\infty}P_n$ belongs to $L$.

\textbf{Step 2:}
We next show that $L$ is closed. For $n \ge 1$ and $\epsilon \ge 0$ define
\[
S^\epsilon_n = \left\{P_n \circ (\widehat{\mu}^n)^{-1} : P_n \in \RC_n \text{ is an } \epsilon\text{-optimal control}\right\} \subset \P(\P^p(\C^d\times\V)).
\]
Note that $S^\delta_n \subset S_n^\epsilon$ for all $\delta < \epsilon$.
It is easy to see that $L$ is precisely the set of $M \in \P(\P^p(\C^d\times\V))$ such that, for every $\epsilon > 0$ and every open set $U$ containing $M$, there exists $N$  such that $U \cap S^\epsilon_n \neq \emptyset$ for all $n \ge N$. Now fix $M \notin L$. Find $\epsilon > 0$, an open set $U$ containing $M$, and $n_k \rightarrow \infty$ such that $U \cap S^{\epsilon}_{n_k} = \emptyset$ for all $k$. Then $U \cap \overline{S}^{\epsilon}_{n_k} = \emptyset$. Define $V = U \backslash \bigcap_{k=1}^\infty \overline{S}^\epsilon_{n_k}$, and notice that $V$ is open. Clearly $V$ contains $M$. By construction, $V \cap S^\epsilon_{n_k} = \emptyset$ for each $k$, which shows that $V \subset L^c$. Thus $L^c$ is open.

\textbf{Step 3:}
Finally, we show that $L$ contains the extreme points $\{\delta_m : m \in\RC^*\}$ of $\P(\RC^*)$. Fix $m^* \in \RC^*$. By Proposition \ref{pr:propagation}, there exist $P_n \in \RC_n$ such that $P_n \circ (\widehat{\mu}^n)^{-1} \rightarrow \delta_{m^*}$ and $\E^{P_n}[\Gamma(\widehat{\mu}^n)] \rightarrow \Gamma(m^*)$. Let $P_n^* \in \RC_n$ be an $n$-state optimal control for each $n$, the existence of which is guaranteed by Theorem \ref{th:n-player-existence}. Let
\[
\epsilon_n = \E^{P^*_n}[\Gamma(\widehat{\mu}^n)] - \E^{P_n}[\Gamma(\widehat{\mu}^n)].
\]
Optimality of $P_n^*$ ensures that $\epsilon_n \ge 0$.
By Theorem \ref{th:main-limit}, every limit point of $P_n^* \circ (\widehat{\mu}^n)^{-1}$ is supported on $\RC^*$. By upper semicontinuity of $\Gamma$ (see Lemma \ref{le:gamma-usc}), this implies 
\[
\limsup_{n\rightarrow\infty}\E^{P^*_n}[\Gamma(\widehat{\mu}^n)]  \le \sup_{m \in \RC^*}\Gamma(m) = \Gamma(m^*) = \lim_{n\rightarrow\infty}\E^{P_n}[\Gamma(\widehat{\mu}^n)].
\]
This shows $\epsilon_n \rightarrow 0$, completing the proof.
\hfill\qedsymbol

\section{Strong versus relaxed formulations} \label{se:strongvsweak}
This section is devoted to the proof of Theorem \ref{th:strongequalsweak-MF}. The proof of Theorem \ref{th:strongequalsweak-n} is nearly identical up to notational changes, so we omit it. Recall that we assume throughout that $p'=2$. We start with a lemma due mostly to M\'el\'eard \cite{meleard1992martingale}, modulo integrability issues.

\begin{lemma} \label{le:meleard}
Suppose $M$ is a martingale measure with intensity $\Lambda_t(da)dt$, defined on some filtered probability space supporting a $\P^p(A)$-valued process $\Lambda$. Suppose $\Lambda^n_t(da)dt \rightarrow \Lambda_t(da)dt$ weakly, almost surely, for some other $\P^p(A)$-valued processes $\Lambda^n$ satisfying
\begin{align}
\lim_{r\rightarrow\infty}\sup_n\E\left[\int_0^T\int_{\{|a| > r\}}|a|^2\Lambda^n_t(da)dt\right] < \infty. \label{def:mtgmeasureUI}
\end{align}
Then there exists a sequence of martingale measures $M^n$ (defined on an extension of the probability space) with intensities $\Lambda^n_t(da)dt$ such that
\begin{align}
\lim_{n\rightarrow\infty}\E\left[\left(\int_{A \times [0,T]}\varphi(t,a)M^n(da,dt)-\int_{A \times [0,T]}\varphi(t,a)M(da,dt)\right)^2\right] = 0, \label{def:mtgmeasure-approx}
\end{align}
for every predictable function $\varphi(t,a)$, continuous in $a$, and satisfying $|\varphi(t,a)|^2 \le c(Z + |a|^2)$ for all $a \in A$, for some integrable random variable $Z \ge 0$ and some $c > 0$.
\end{lemma}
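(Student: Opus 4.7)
My plan is to reduce to the main construction of M\'el\'eard \cite{meleard1992martingale}, which produces the sequence $M^n$ with the required convergence \eqref{def:mtgmeasure-approx} in the simpler setting of bounded continuous test functions compactly supported in $a$, and then to extend to the quadratically-growing class described in the statement by truncation and the It\^o isometry for martingale measures.

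First I would invoke M\'el\'eard's theorem on a common enlargement of the underlying probability space to obtain the $M^n$, each with intensity $\Lambda^n_t(da)dt$, satisfying \eqref{def:mtgmeasure-approx} for every bounded continuous $\psi$ that is compactly supported in $a$. Next, I fix a smooth cutoff $\chi_R : A \to [0,1]$ equal to $1$ on $\{|a|\le R\}$ and vanishing on $\{|a|\ge 2R\}$, and decompose $\varphi = \varphi_R + (\varphi - \varphi_R)$ with $\varphi_R := \varphi\chi_R$; both pieces remain predictable. By the martingale measure isometry,
\[
\E\left[\left(\int(\varphi-\varphi_R)\,dM^n\right)^2\right] \le c\,\E\left[\int_0^T\!\!\int_{\{|a|>R\}}(Z+|a|^2)\Lambda^n_t(da)dt\right],
\]
and the same estimate holds with $M$ in place of $M^n$. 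The $|a|^2$-contribution is small uniformly in $n$ as $R\to\infty$ by \eqref{def:mtgmeasureUI}. For the $Z$-contribution, set $Y^n_R := \int_0^T \Lambda^n_t(\{|a|>R\})dt \le T$ and split $\E[ZY^n_R] \le K\,\E[Y^n_R] + T\,\E[Z\mathbf{1}_{\{Z>K\}}]$; the Chebyshev bound $Y^n_R \le R^{-2}\int_0^T\!\!\int|a|^2\Lambda^n_t(da)dt$ makes the first summand vanish as $R\to\infty$ for fixed $K$, while integrability of $Z$ makes the second summand vanish as $K\to\infty$ uniformly in $R$ and $n$.

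Finally, for fixed $R$, the truncated $\varphi_R$ is predictable, continuous and compactly supported in $a$, but still unbounded because of the random weight $Z$. I further truncate in value via $\varphi_R^K := (\varphi_R \wedge K) \vee (-K)$, which preserves all three properties and is now bounded; the isometry applied to $\varphi_R - \varphi_R^K$, together with $\{|\varphi_R|>K\} \subset \{Z > K^2/c - 4R^2\}$, shows that this error vanishes as $K\to\infty$ (with $R$ fixed) by integrability of $Z$. For the bounded compactly supported $\varphi_R^K$, M\'el\'eard's first-step result applies directly and gives $\int\varphi_R^K\,dM^n\to\int\varphi_R^K\,dM$ in $L^2$. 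Sending $n\to\infty$, then $K\to\infty$, then $R\to\infty$ in the triangle inequality yields \eqref{def:mtgmeasure-approx} in the full generality stated. The main technical point, and the reason a two-layer truncation is needed, is that $Z$ is assumed only integrable rather than square-integrable; once this is handled, the remainder of the argument is a routine isometry estimate.
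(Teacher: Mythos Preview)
Your proof is correct and follows the same overall strategy as the paper: invoke M\'el\'eard's construction of the $M^n$ for a restricted class of integrands, then extend to the full class via truncation and the It\^o isometry. The paper's argument is shorter because it truncates in a single step, writing $\varphi = \varphi 1_{\{|\varphi|\le r\}} + \varphi 1_{\{|\varphi|>r\}}$, applying M\'el\'eard's result directly to the bounded piece, and controlling the tail via the inclusion $\{|\varphi|>r\} \subset \{Z+|a|^2 > r/c\} \subset \{Z>r/2c\}\cup\{|a|^2>r/2c\}$ together with integrability of $Z$ and the hypothesis \eqref{def:mtgmeasureUI}. Your two-layer scheme (smooth spatial cutoff $\chi_R$, then the continuous value clamp $(\cdot\wedge K)\vee(-K)$) is slightly longer but has the advantage that every truncated integrand you feed into M\'el\'eard's result remains continuous in $a$; the paper's hard cutoff $\varphi 1_{\{|\varphi|\le r\}}$ is in general discontinuous in $a$, a small wrinkle the paper does not comment on. Your explicit splitting of the cross term $\E[Z Y^n_R]$ into a Chebyshev-controlled piece and a $Z$-tail piece is likewise more careful than the paper's treatment, which bounds the analogous mixed terms somewhat tersely.
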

\begin{proof}
The result of M\'el\'eard \cite{meleard1992martingale} provides a sequence of martingale measures $M^n$ with intensities $\Lambda^n_t(da)dt$ such that \eqref{def:mtgmeasure-approx} holds for all \emph{bounded} predictable $\varphi$ which are continuous in $a$. For general $\varphi$, let $r > 0$ and compute
\begin{align}
\E&\left[\left(\int\varphi(t,a)M^n(da,dt)-\int\varphi(t,a)M(da,dt)\right)^2\right] \nonumber \\
	&\le 3\E\left[\left(\int\varphi(t,a)1_{\{|\varphi| \le r\}}M^n(da,dt)-\int\varphi(t,a)1_{\{|\varphi| \le r\}}M(da,dt)\right)^2\right] \nonumber \\
	&\quad + 3\E\left[\left(\int\varphi(t,a)1_{\{|\varphi| > r\}}M^n(da,dt)\right)^2 + \left(\int\varphi(t,a)1_{\{|\varphi| > r\}}M(da,dt)\right)^2\right]. \label{pf:mtgmeasureapprox1}
\end{align}
The first term tends to zero, thanks to the aforementioned result of \cite{meleard1992martingale}. For the second term, notice that
\begin{align*}
\E\left[\left(\int\varphi(t,a)1_{\{|\varphi| > r\}}M^n(da,dt)\right)^2\right] &= \E\left[\int|\varphi(t,a)|^21_{\{|\varphi| > r\}}\Lambda^n_t(da)dt\right] \\
	&\le c\E\left[\int (Z+ |a|^2)1_{\{Z+|a|^2 > r/c\}}\Lambda^n_t(da)dt\right] \\
	&\le c\E\left[TZ1_{\{Z > r/2c\}} + \int |a|^2 1_{\{|a|^2 > r/2c\}}\Lambda^n_t(da)dt\right].
\end{align*}
Since $\E Z < \infty$ by assumption, this can be made arbitrarily small, uniformly in $n$, by sending $r\rightarrow\infty$. Similarly, by Fatou's lemma, the assumption \eqref{def:mtgmeasureUI} implies $\E\int_0^T\int_A|a|^2\Lambda_t(da)dt < \infty$,  and we see that choosing $r$ large can make the second term in line \eqref{pf:mtgmeasureapprox1} arbitrarily small.
\end{proof}

\subsection*{Proof of Theorem \ref{th:strongequalsweak-MF}}

Let $m \in \RC^*$, recalling that Theorem \ref{th:existence} ensures $\RC^* \neq \emptyset$. By Proposition \ref{pr:SDErepresentation}, there exists filtered probability space $(\Omega,\F,\FF,\PP)$ supporting $(X,\Lambda,M)$ satisfying the following:
\begin{enumerate}
\item $X$ is a $d$-dimensional $\FF$-adapted process.
\item $\Lambda$ is an $\FF$-predictable $\P^p(A)$-valued process.
\item $M=(M^1,\ldots,M^{d_W})$ are orthogonal $\FF$-martingale measures, each with intensity  $\Lambda_t(da)dt$.
\item The McKean-Vlasov equation holds:
\begin{align}
dX_t = \int_Ab(t,X_t,m^x_t,a)\Lambda_t(da)dt + \int_A\sigma(t,X^i_t,m^x_t,a)M(da,dt). \label{pf:valueequal0}
\end{align}
\item The law of $(X,\Lambda)$ is precisely $m$.
\end{enumerate}
By Lemma \ref{le:estimate-optimal}, we have (since $p'=2$)
\begin{align}
\E\left[\|X\|^2 + \int_0^T\int_A|a|^2\Lambda_t(da)dt\right] < \infty. \label{pf:valueequal1.1}
\end{align}
We do a three-step approximation:

\textbf{Step 1:}
We first approximate $\Lambda$ by bounded controls. For $n\ge 1$, let $\iota_n : A \rightarrow A$ be any measurable function such that $|\iota_n(a)| \le n$ for all $a \in A$ and $\iota_n(a)=a$ when $|a| \le n$. Define $\Lambda^n_t = \Lambda_t \circ \iota_n^{-1}$, so that clearly $\Lambda^n_t(da)dt \rightarrow \Lambda_t(da)dt$ a.s.
Note also that
\begin{align}
\lim_{r\rightarrow\infty}\sup_n\E\left[\int_0^T\int_{\{|a| > r\}}|a|^2\Lambda^n_t(da)dt\right] \le \lim_{r\rightarrow\infty}\left[\int_0^T\int_{\{|a| > r\}}|a|^2\Lambda_t(da)dt\right] = 0, \label{pf:valueequal-estimate1.0}
\end{align}
thanks to \eqref{pf:valueequal1.1} and the simple observation that $|\iota_n(a)| \le |a|$ for all $a \in A$.
By Lemma \ref{le:meleard}, there exists (on an enlargement of the probability space) a sequence of orthogonal martingale measures $M^n=(M^{n,1},\ldots,M^{n,d_W})$ such that $M^{n,i}$ has intensity measure $\Lambda^n_t(da)dt$ for each $i$ and
\[
\lim_{n\rightarrow\infty}\E\left[\left(\int_{A \times [0,T]}\varphi(t,a)M^n(da,dt)-\int_{A \times [0,T]}\varphi(t,a)M(da,dt)\right)^2\right] = 0,
\]
for every predictable function $\varphi$ (from $[0,T]\times\Omega\times A$ to $\R^{d\times d_W}$) satisfying $|\varphi(t,a)|^2 \le c(Z + |a|^2)$ for all $a \in A$, for some $c > 0$ and some integrable random variable $Z$.

Let $X^n$ denote the unique solution of the McKean-Vlasov equation
\begin{align*}
dX^n_t = \int_Ab(t,X^n_t,\PP \circ (X^n_t)^{-1},a)\Lambda^n_t(da)dt + \int_A\sigma(t,X^n_t,\PP \circ (X^n_t)^{-1},a)M^n(da,dt), \ \ X^n_0 = X_0.
\end{align*}
Note that the Lipschitz assumption \ref{assumption:B} ensures the well-posedness of this equation, by standard arguments. By Proposition \ref{pr:SDErepresentation} $\PP \circ (X^n,\Lambda^n)^{-1}$ belongs to $\RC$. The rest of this step is a long but straightforward proof, using the Lipschitz assumption, that $\E[\|X^n-X\|^2] \rightarrow 0$, from which the desired approximations will quickly follow.
Recall the notation $\|x\|_t = \sup_{s \in [0,t]}|x_s|$ and also the truncated Wasserstein distance $\ell_{t,\C^d,p}$ from \eqref{def:truncated-wasserstein}. Apply the Burkholder-Davis-Gundy inequality and Jensen's inequality to get (for a constant $C > 0$ which changes from line to line)
\begin{align*}
\E&\left[\|X^n-X\|_t^2\right] \\
	&\le C\E\left\{\int_0^t\int_A\left|b(s,X^n_s,\PP \circ (X^n_s)^{-1},a) - b(s,X_s,m^x_s,a)\right|^2\Lambda^n_s(da)ds\right. \\
	&\quad\quad + \left|\int_0^t\int_Ab(s,X_s,m^x_s,a)\Lambda^n_s(da)ds - \int_0^t\int_Ab(s,X_s,m^x_s,a)\Lambda_s(da)ds\right|^2 \\
	&\quad\quad + \left|\int_0^t\int_A\sigma(s,X^n_s,\PP \circ (X^n_s)^{-1},a) - \sigma(s,X_s,m^x_s,a)M^n(da,ds)\right|^2 \\
	&\quad\quad + \left.\left|\int_0^t\int_A\sigma(s,X_s,m^x_s,a)M^n(da,ds) - \int_0^t\int_A\sigma(s,X_s,m^x_s,a)M(da,ds)\right|^2\right\} \\
	&=: C\E[I_1+I_2+I_3+I_4]
\end{align*}
The Lipschitz assumption \ref{assumption:B} yields
\[
\E[I_1] \le C\int_0^t\left(\|X^n-X\|_s^2 + \ell_{s,\C^d,2}^2(\PP \circ (X^n)^{-1},m^x)\right)dt,
\]
and also
\begin{align*}
\E[I_3] &= \E\left[\int_0^t\int_A\left|\sigma(s,X^n_s,\PP \circ (X^n_s)^{-1},a) - \sigma(s,X_s,m^x_s,a)\right|^2\Lambda^n_s(da)ds\right] \\
	&\le C\int_0^t\left(\|X^n-X\|_s^2 + \ell_{s,\C^d,2}^2(\PP \circ (X^n)^{-1},m^x)\right)dt.
\end{align*}
By Gronwall's inequality,
\[
\E\left[\|X^n-X\|_t^2\right] \le C\int_0^t\ell_{s,\C^d,2}^2(\PP \circ (X^n)^{-1},m^x)ds + C\E[I_2 + I_4].
\]
Since $\PP \circ (X^n,X)^{-1}$ is a coupling of $\PP \circ (X^n)^{-1}$ and $m^x$, we have
\[
\ell_{t,\C^d,2}^2(\PP \circ (X^n)^{-1},m^x) \le \E\left[\|X^n-X\|_t^2\right],
\]
and another application of Gronwall's inequality yields
\begin{align*}
\ell_{\C^d,2}^2(\PP \circ (X^n)^{-1},m^x) &\le C\E[I_2+I_4].
\end{align*}
Assumption (A.4) and $m^x = \PP \circ X^{-1}$ imply (since $p < p'=2$)
\[
|b(s,X_s,m^x_s,a)|^2 \le C\left(1 + \|X\|^2 + |a|^2\right),
\]
and it follows from \eqref{pf:valueequal1.1} and continuity of $b$ that $\E[I_2] \rightarrow 0$ as $n\rightarrow\infty$. Similarly, assumption (A.4) implies (since $p_\sigma \le 2$)
\[
|\sigma(s,X_s,m^x_s,a)|^2 \le C\left(1 + \|X\|^2 + |a|^2\right),
\]
and it follows from \eqref{pf:valueequal1.1}, continuity of $\sigma$, and Lemma \ref{le:meleard} that $\E[I_4] \rightarrow 0$ as $n\rightarrow\infty$. We finally conclude that
\begin{align*}
\lim_{n\rightarrow\infty}\E[\|X^n-X\|^2] = 0,
\end{align*}
and so $\PP \circ (X^n,\Lambda^n)^{-1} \rightarrow m$ in $\P^p(\C^d\times\V)$.
It follows from Lemma \ref{le:gamma-usc} and \eqref{pf:valueequal-estimate1.0} that $\Gamma(\PP \circ (X^n,\Lambda^n)^{-1}) \rightarrow \Gamma(m)$.

\textbf{Step 2.} In light of step 1, we may now assume without loss of generality that our original control $\Lambda$ is bounded, in the sense that there exists $r > 0$ such that $\Lambda([0,T] \times B_r)=0$ a.s., where $B_r$ denotes the centered ball of radius $r$. We now use the chattering lemma \cite[Theorem 2.2(b)]{elkaroui-partialobservations} (originally due to Fleming \cite{fleming-generalized}) to find a sequence of progressively measurable $B_r$-valued processes $(\alpha^n_t)_{t \in [0,T]}$ such that $\delta_{\alpha^n_t}(da)dt \rightarrow \Lambda_t(da)dt$ a.s., and by another result of M\'el\'eard \cite{meleard1992martingale} we can find a $d_W$-dimensional Wiener process $W$ (again by extending the probability space) such that
\[
\lim_{n\rightarrow\infty}\E\left[\left(\int_0^T\varphi(t,\alpha^n_t)dW_t - \int_{B_r \times [0,T]}\varphi(t,a)M(da,dt)\right)^2\right] = 0,
\]
for every bounded predictable random $\R^{d \times d_W}$-valued function $\varphi$, where $M$ is again a vector $M=(M^1,\ldots,M^{d_W})$ of orthogonal martingale measures, each with intensity $\Lambda_t(da)dt$.\footnote{This does not follow immediately from Lemma \ref{le:meleard}. The key point is that we have a \emph{single} Wiener process $W$, whereas Lemma \ref{le:meleard} would yield a \emph{sequence} $W^n$.} As in Lemma \ref{le:meleard}, we can relax the boundedness assumption, as long as there is an integrable random variable $Z \ge 0$ such that $|\varphi(t,a)| \le Z$ a.s. Define $X^n$ as the unique solution of the McKean-Vlasov SDE
\[
dX^n_t = b(t,X^n_t,\PP \circ (X^n_t)^{-1},\alpha^n_t)dt + \sigma(t,X^n_t,\PP \circ (X^n_t)^{-1},\alpha^n_t)dW_t, \ X^n_0=X_0.
\]
We prove exactly as in Step 1 that $\E\|X^n-X\|^2 \rightarrow 0$, and so $\PP \circ (X^n,\delta_{\alpha^n_t}(da)dt)^{-1} \rightarrow \PP \circ (X,\Lambda)^{-1} = m$ in $\P^p(\C^d\times\V)$. Because the controls are uniformly bounded by $r$, the uniform integrability condition of Lemma \ref{le:gamma-usc} holds trivially, and we conclude that $\Gamma(\PP \circ (X^n,\delta_{\alpha^n_t}(da)dt)^{-1}) \rightarrow \Gamma(m)$.

\textbf{Step 3.} In light of step 2, we now assume without loss of generality that our original $m$ and our filtered probability space supports process $(X,\alpha,W)$ satisfying:
\begin{enumerate}
\item $X$ is a $d$-dimensional $\FF$-adapted process satisfying.
\item $\alpha$ is an $\FF$-predictable $A$-valued process, uniformly bounded in norm by a constant $r > 0$.
\item $W$ is a $d_W$-dimensional Wiener process.
\item The McKean-Vlasov equation holds:
\begin{align}
dX_t = b(t,X_t,m^x_t,\alpha_t)dt + \sigma(t,X_t,m^x_t,\alpha_t)dW_t. \label{pf:valueequal0-2}
\end{align}
\item The law of $(X,\delta_{\alpha_t}(da)dt)$ is precisely $m$.
\end{enumerate}
The final step is to approximate $\alpha$ in a weak sense by controls which are \emph{strong}, i.e., progressively measurable with respect to the filtration $\FF^W=(\sigma(X_0,W_s : s \le t))_{t \in [0,T]}$ generated by the Wiener process and initial state.
For this we appeal to \cite[Lemma 3.11]{carmonadelaruelacker-mfgcommonnoise} to find a sequence of $\FF^W$-progressively measurable $A$ processes  $(\alpha^n_t)_{t \in [0,T]}$ which share the same uniform bound as $\alpha$, such that
\begin{align}
\PP \circ (dt\delta_{\alpha^n_t}(da),W)^{-1} \rightarrow \PP \circ (\delta_{\alpha_t}(da)dt,W)^{-1}, \ \text{ in } \P^p(\V \times \C^{d_W}). \label{def:strongapprox}
\end{align}
In other words, $\alpha^n$ are \emph{strong controls} which approximate $\alpha$ in joint law.
Thanks to assumption \ref{assumption:B}, there exists a unique strong solution $X^k$ of the McKean-Vlasov SDE
\[
dX^n_t = b(t,X^n_t,\PP \circ (X^n_t)^{-1},\alpha^n_t)dt + \sigma(t,X^n_t,\PP \circ (X^n_t)^{-1})dW_t, \quad\quad X^n_0=X_0.
\]
Define $m^n = \PP \circ (X^n,dt\delta_{\alpha^n_t}(da))^{-1}$, and note that $m^n$ belongs to the set $\RC^s$ of strong controls. Now, suppose that we can show $m^n \rightarrow m$ in $\P^p(\C^d\times\V)$. Because $\alpha^n$ are uniformly bounded, it will then follow from Lemma \ref{le:estimate-optimal} that
\[
\sup_n\E\left[\|X^n\|^{p''}\right] < \infty,
\]
where we recall that $p'' > p'=2$  and $\int|x|^{p''}\lambda(dx) < \infty$ by assumption.
This is enough to verify the uniform integrability condition of Lemma \ref{le:gamma-usc}, which shows that $\Gamma(m^n) \rightarrow \Gamma(m)$. Hence, it remains to prove that $m^n \rightarrow m$ in $\P^p(\C^d\times\V)$.

It follows from Proposition \ref{pr:itocompactness} and the uniformly boundedness of $\alpha^n$ that the sequence $Q_n := \PP \circ (X^n,dt\delta_{\alpha^n_t}(da),W)^{-1}$ is precompact in $\P^p(\C^d \times \V \times \C^{d_W})$. Now let $(\overline{X},\overline{\Lambda},\overline{W})$ denote the canonical process on $\C^d \times \V \times \C^{d_W}$. If $Q$ denotes any limit point of $Q_n$, it is clear from \eqref{def:strongapprox} that 
\begin{align}
Q \circ (\overline{\Lambda},\overline{W},\overline{X}_0)^{-1} = \lim_{n\rightarrow\infty}\PP \circ (\delta_{\alpha^n_t}(da)dt,W,X_0)^{-1} = \PP \circ (\delta_{\alpha_t}(da)dt,W,X_0)^{-1}. \label{pf:inputlaws-equal}
\end{align}
Thus, under $Q$, there exists a $\sigma(\overline{X}_0,\overline{W}_s : s \le t)$-progressively measurable $A$-valued process $\overline{\alpha}$ such that $\overline{\Lambda}_t = \delta_{\overline{\alpha}_t}$ for a.e. $t$, almost surely.
We then argue using continuity of the coefficients $(b,\sigma)$ (either using martingale problems or the results of Kurtz and Protter \cite{kurtzprotter-weakconvergence}) that under $Q$ the following SDE holds:
\[
d\overline{X}_t = b(t,\overline{X}_t,\PP \circ \overline{X}_t^{-1},\overline{\alpha}_t)dt + \sigma(t,\overline{X}_t,\PP \circ \overline{X}_t^{-1},\overline{\alpha}_t)d\overline{W}_t.
\]
Because of \eqref{pf:inputlaws-equal}, uniqueness in law of the SDE \eqref{pf:valueequal0-2} implies $Q = \PP \circ (X,\delta_{\alpha_t}(da)dt,W)^{-1}$. (See \cite{kurtz-yw2013}, particularly example 2.14 therein, for a more careful discussion of the Yamada-Watanabe theorem in the context of McKean-Vlasov equations.) This holds for every limit point $Q$, and we conclude that
\[
\PP \circ (X^n,dt\delta_{\alpha^n_t}(da),W)^{-1} \rightarrow \PP \circ (X,\delta_{\alpha_t}(da)dt,W)^{-1}, \ \text{ in } \P^p(\C^d\times \V \times \C^{d_W}).
\]
Marginalizing yields
\[
m^n = \PP \circ (X^n,dt\delta_{\alpha^n_t}(da))^{-1} \rightarrow \PP \circ (X,\delta_{\alpha_t}(da)dt)^{-1} = m, \ \text{ in } \P^p(\C^d\times \V).
\]
\hfill\qedsymbol

\appendix

\section{A note on the filtration of $\C^d\times\V$} \label{ap:separating}

The goal of this section is to clarify a technical point that arose in the proofs of Lemma \ref{le:rceps-compact} and Proposition \ref{pr:tight-limits}.
Recall that $(X,\Lambda)$ denotes the identity map on $\C^d\times\V$. Recall also that the natural filtration on $\C^d\times\V$ is defined as 
\[
\F_t  = \sigma(X_s,\Lambda([0,s]\times B) : s \le t,  B \subset A \text{ Borel}).
\]
If $E$ is a Polish space and $\G$ a sub-$\sigma$-field of the Borel sets, let us say  a family $\Phi$  of $\G$-measurable functions is a \emph{separating class for $\G$} if $\int\varphi\,d\mu=\int\varphi\,d\nu$ for all $\varphi \in \Phi$ implies $\mu=\nu$ on $\G$, whenever $\mu,\nu \in \P(E)$. When $\G$ is the entire Borel $\sigma$-field, simply say that $\Phi$ is a \emph{separating class}. It is known that every Polish space admits a countable separating class consisting of bounded continuous functions. Note that if $\Phi$ is separating then $\int\varphi\,d\mu = \int\varphi\,d\nu$ for all $\varphi \in \Phi$ implies $\mu=\nu$ for all bounded signed measures $\mu,\nu$, which is spanned by the space of probability measures. In particular, if $\Phi$ is separating for $\G$, and if $X$ and $Y$ are random variables satisfying $\E[X\varphi] = \E[Y\varphi]$ for all $\varphi \in \Phi$, then $\E[X | \G] = \E[Y | \G]$ a.s.

\begin{lemma} \label{le:filtration-separating}
For each $t \in [0,T]$, $\F_t$ admits a countable separating class $\Phi_t$ of bounded continuous functions.
\end{lemma}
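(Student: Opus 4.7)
The plan is to identify $\F_t$ as the pullback of the Borel $\sigma$-field of a smaller Polish space under a continuous restriction map, and then to invoke the standard fact that every Polish space carries a countable separating class of bounded continuous functions. Concretely, let $\V_t$ denote the analogue of $\V$ built over $[0,t]$ rather than $[0,T]$: the set of measures on $[0,t]\times A$ whose first marginal is Lebesgue on $[0,t]$ and with finite $p$-th moment in $a$, equipped with the correspondingly scaled $p$-Wasserstein distance. Define the restriction map
\[
\pi_t : \C^d \times \V \to C([0,t];\R^d) \times \V_t, \qquad \pi_t(x,q) = \bigl(x|_{[0,t]},\; q|_{[0,t]\times A}\bigr).
\]

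First I would verify that $\pi_t$ is continuous. The path-space coordinate is immediate from the supremum norm. For the measure coordinate, suppose $q^n \to q$ in $\V$; since both sides have first marginal Lebesgue, $q(\{t\}\times A)=0$, so a Portmanteau argument combined with the convergence of $p$-th moments implicit in the $p$-Wasserstein distance yields $\int_{[0,t]\times A}\varphi\,dq^n \to \int_{[0,t]\times A}\varphi\,dq$ for every continuous $\varphi$ with $p$-growth in $a$, giving $q^n|_{[0,t]\times A} \to q|_{[0,t]\times A}$ in $\V_t$. Next I would check that $\F_t = \pi_t^{-1}(\mathcal{B}_t)$, where $\mathcal{B}_t$ is the Borel $\sigma$-field of the target: the Borel $\sigma$-field of $C([0,t];\R^d)$ is generated by the evaluations $y \mapsto y_s$, $s \in [0,t]$, and, as recorded in Section~2 for $\V$, the Borel $\sigma$-field of $\V_t$ coincides with the one induced by the evaluations $r \mapsto r(C)$ for Borel $C \subset [0,t]\times A$. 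Pulling these generators back, and observing that $\{[0,s]\times B : s \le t,\, B \subset A \text{ Borel}\}$ is a $\pi$-system generating the Borel $\sigma$-field of $[0,t]\times A$, recovers precisely the generators defining $\F_t$.

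To finish, since $C([0,t];\R^d) \times \V_t$ is Polish, it admits a countable family $\Psi_t$ of bounded continuous functions separating probability measures, by a standard construction (for instance, rational polynomials evaluated on a countable family of Gaussian-type kernels $y \mapsto e^{-\|y-y_j\|^2}$, or via a one-point compactification and Stone--Weierstrass). Setting $\Phi_t := \{\psi \circ \pi_t : \psi \in \Psi_t\}$ yields a countable family of bounded continuous $\F_t$-measurable functions: if $\mu,\nu \in \P(\C^d\times\V)$ agree on all of $\Phi_t$, their $\pi_t$-pushforwards agree on $\Psi_t$, hence coincide on $\mathcal{B}_t$, and therefore $\mu=\nu$ on $\pi_t^{-1}(\mathcal{B}_t)=\F_t$. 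The main technical obstacle I anticipate is the continuity of $q \mapsto q|_{[0,t]\times A}$ in the $p$-Wasserstein metric, which relies essentially on $q(\{t\}\times A)=0$ for every $q \in \V$; the remaining ingredients are routine.
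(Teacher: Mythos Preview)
Your proposal is correct and takes essentially the same route as the paper: factor $\F_t$ through a continuous restriction map into a Polish target and pull back a countable separating class from there. The only cosmetic differences are that the paper normalizes the restricted relaxed control by $1/t$ so as to land in $\P^p([0,t]\times A)$ rather than your $\V_t$, invokes a prior lemma for the continuity of that restriction (precisely the point you flagged as the main obstacle), and assembles $\Phi_t$ as products of separating classes on the two factors rather than taking a single class on the product directly.
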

\begin{proof}
The statement is clearly true for $t=0$, as $\F_0 = \sigma(X_0)$ can be identified with the Borel $\sigma$-field of $\R^d$. Fix $t > 0$. We deal with $X$ and $\Lambda$ separately. For $X$, notice that the restriction of $x \in \C^d$ to $x_{\cdot \wedge t} \in C([0,t];\R^d)$ is a continuous operation and that the Polish space $C([0,t];\R^d)$ admits a countable separating class $\Phi^X_t$ of bounded continuous functions.
For $\Lambda$, define first the restriction of any $q \in \V$ by $q^t(\cdot) = t^{-1}q(([0,t] \times A) \cap \cdot) \in \P^p([0,t] \times A)$. The map $q \mapsto q^t$ is continuous from $\V$ to $\P^p([0,t] \times A)$ (see, e.g. \cite[Corollary A.3]{lacker-mfgcontrolledmartingaleproblems}). As a Polish space, $\P^p([0,t] \times A)$ admits a countable separating class $\Phi^\Lambda_t$ of bounded continuous functions. Without loss of generality, assume that the constant function $1$ belongs to both $\Phi^X_t$ and $\Phi^\Lambda_t$. Finally, define $\Phi_t$ to be the set of functions of the form $\C^d \times \V \ni (x,q) \mapsto \varphi(x_{\cdot \wedge t})\psi(q^t)$, where $\varphi \in \Phi^X_t$ and $\psi\in \Phi^\Lambda_t$. Then $\Phi_t$ fits the bill; see Proposition 3.4.6 of \cite{ethierkurtz} for a proof that $\Phi_t$ is separating. 
\end{proof}

\bibliographystyle{amsplain}
\bibliography{MFGconvergence-bib}

\providecommand{\bysame}{\leavevmode\hbox to3em{\hrulefill}\thinspace}
\providecommand{\MR}{\relax\ifhmode\unskip\space\fi MR }
% \MRhref is called by the amsart/book/proc definition of \MR.
\providecommand{\MRhref}[2]{%
  \href{http://www.ams.org/mathscinet-getitem?mr=#1}{#2}
}
\providecommand{\href}[2]{#2}
\begin{thebibliography}{10}

\bibitem{anderssondjehiche-maximum}
D.~Andersson and B.~Djehiche, \emph{A maximum principle for {SDE}s of
  mean-field type}, Applied Mathematics \& Optimization \textbf{63} (2011),
  no.~3, 341--356.

\bibitem{bahlali2014existence}
K.~Bahlali, M.~Mezerdi, and B.~Mezerdi, \emph{Existence of optimal controls for
  systems governed by mean-field stochastic differential equations}, Afrika
  Statistika \textbf{9} (2014), no.~1, 627--645.

\bibitem{bayraktar-cosso-pham}
E.~Bayraktar, A.~Cosso, and H.~Pham, \emph{Randomized dynamic programming
  principle and {F}eynman-{K}ac representation for optimal control of
  {M}c{K}ean-{V}lasov dynamics}, arXiv preprint arXiv:1606.08204 (2016).

\bibitem{bensoussan-mfgbook}
A.~Bensoussan, J.~Frehse, and P.~Yam, \emph{Mean field games and mean field
  type control theory}, Springer, 2013.

\bibitem{bensoussan-masterequation}
\bysame, \emph{The master equation in mean field theory}, arXiv preprint
  arXiv:1404.4150 (2014).

\bibitem{brunickshreve-mimicking}
G.~Brunick and S.~Shreve, \emph{Mimicking an {I}t{\^o} process by a solution of
  a stochastic differential equation}, arXiv preprint arXiv:1011.0111 (2010).

\bibitem{buckdahndjehicheli-general}
R.~Buckdahn, B.~Djehiche, and J.~Li, \emph{A general stochastic maximum
  principle for sdes of mean-field type}, Applied Mathematics \& Optimization
  \textbf{64} (2011), no.~2, 197--216.

\bibitem{budhirajadupuisfischer}
A.~Budhiraja, P.~Dupuis, and M.~Fischer, \emph{Large deviation properties of
  weakly interacting processes via weak convergence methods}, The Annals of
  Probability (2012), 74--102.

\bibitem{carmonadelarue-mkv}
R.~Carmona and F.~Delarue, \emph{Forward-backward stochastic differential
  equations and controlled {M}c{K}ean {V}lasov dynamics}, arXiv preprint
  arXiv:1303.5835 (2013).

\bibitem{carmonadelarue-master}
\bysame, \emph{The master equation for large population equilibriums},
  Stochastic Analysis and Applications 2014, Springer, 2014, pp.~77--128.

\bibitem{carmonadelarue-book}
R.~Carmona and F.~Delarue, \emph{Probabilistic theory of mean field games with
  applications}, (expected), Springer Verlag, 2016.

\bibitem{carmonadelaruelachapelle-mkvvsmfg}
R.~Carmona, F.~Delarue, and A.~Lachapelle, \emph{Control of
  {M}c{K}ean--{V}lasov dynamics versus mean field games}, Mathematics and
  Financial Economics \textbf{7} (2013), no.~2, 131--166.

\bibitem{carmonadelaruelacker-mfgcommonnoise}
R.~Carmona, F.~Delarue, and D.~Lacker, \emph{Mean field games with common
  noise}, arXiv preprint arXiv:1407.6181 (2014).

\bibitem{dufourstockbridge-existence}
F.~Dufour and R.H. Stockbridge, \emph{On the existence of strict optimal
  controls for constrained, controlled markov processes in continuous time},
  Stochastics An International Journal of Probability and Stochastic Processes
  \textbf{84} (2012), no.~1, 55--78.

\bibitem{ethierkurtz}
S.N. Ethier and T.G. Kurtz, \emph{Markov processes: characterization and
  convergence}, 2 ed., vol. 282, Wiley-Interscience, 2005.

\bibitem{filippov-convexity}
A.F. Filippov, \emph{On certain questions in the theory of optimal control},
  Journal of the Society for Industrial \& Applied Mathematics, Series A:
  Control \textbf{1} (1962), no.~1, 76--84.

\bibitem{fischer-mfgconnection}
M.~Fischer, \emph{On the connection between symmetric $ n $-player games and
  mean field games}, arXiv preprint arXiv:1405.1345 (2014).

\bibitem{fischer-meanfield}
M.~Fischer and G.~Livieri, \emph{Continuous time mean-variance portfolio
  optimization through the mean field approach}, ESAIM: Probability and
  Statistics \textbf{20} (2016), 30--44.

\bibitem{fleming-generalized}
W.H. Fleming, \emph{Generalized solutions in optimal stochastic control},
  Differential Games and Control Theory, Kingston Conference 2 \textbf{30}
  (1977), 147--165.

\bibitem{fornasier2014mean}
M.~Fornasier and F.~Solombrino, \emph{Mean-field optimal control}, ESAIM:
  Control, Optimisation and Calculus of Variations \textbf{20} (2014), no.~4,
  1123--1152.

\bibitem{funaki1984certain}
T.~Funaki, \emph{A certain class of diffusion processes associated with
  nonlinear parabolic equations}, Zeitschrift f{\"u}r
  Wahrscheinlichkeitstheorie und Verwandte Gebiete \textbf{67} (1984), no.~3,
  331--348.

\bibitem{gartnermkv}
J.~G\"{a}rtner, \emph{On the {M}c{K}ean-{V}lasov limit for interacting
  diffusions}, Mathematische Nachrichten \textbf{137} (1988), no.~1, 197--248.

\bibitem{graham1997stochastic}
C.~Graham and S.~M{\'e}l{\'e}ard, \emph{Stochastic particle approximations for
  generalized {B}oltzmann models and convergence estimates}, The Annals of
  probability \textbf{25} (1997), no.~1, 115--132.

\bibitem{gyongy-mimicking}
I.~Gy{\"o}ngy, \emph{Mimicking the one-dimensional marginal distributions of
  processes having an {I}t{\^o} differential}, Probability theory and related
  fields \textbf{71} (1986), no.~4, 501--516.

\bibitem{haussmannlepeltier-existence}
U.G. Haussmann and J.P. Lepeltier, \emph{On the existence of optimal controls},
  SIAM Journal on Control and Optimization \textbf{28} (1990), no.~4, 851--902.

\bibitem{huang-centralized}
M.~Huang, P.~Caines, and R.~Malham{\'e}, \emph{Individual and mass behaviour in
  large population stochastic wireless power control problems: centralized and
  {N}ash equilibrium solutions}, Decision and Control, 2003. Proceedings. 42nd
  IEEE Conference on, vol.~1, IEEE, 2003, pp.~98--103.

\bibitem{huang2012social}
\bysame, \emph{Social optima in mean field {LQG} control: centralized and
  decentralized strategies}, IEEE Transactions on Automatic Control \textbf{57}
  (2012), no.~7, 1736--1751.

\bibitem{huangmfg1}
M.~Huang, R.~Malham\'{e}, and P.~Caines, \emph{Large population stochastic
  dynamic games: closed-loop {M}c{K}ean-{V}lasov systems and the {N}ash
  certainty equivalence principle}, Communications in Information and Systems
  \textbf{6} (2006), no.~3, 221--252.

\bibitem{jourdainmeleard-moderate}
B.~Jourdain and S.~M{\'e}l{\'e}ard, \emph{Propagation of chaos and fluctuations
  for a moderate model with smooth initial data}, Annales de l'IHP
  Probabilit{\'e}s et statistiques, vol.~34, 1998, pp.~727--766.

\bibitem{jourdainreygner-rank}
B.~Jourdain and J.~Reygner, \emph{Propagation of chaos for rank-based
  interacting diffusions and long time behaviour of a scalar quasilinear
  parabolic equation}, Stochastic partial differential equations: analysis and
  computations \textbf{1} (2013), no.~3, 455--506.

\bibitem{elkarouimeleard-martingalemeasure}
N.~El Karoui and S.~M{\'e}l{\'e}ard, \emph{Martingale measures and stochastic
  calculus}, Probability {T}heory and {R}elated {F}ields \textbf{84} (1990),
  no.~1, 83--101.

\bibitem{elkaroui-compactification}
N.~El Karoui, D.H. Nguyen, and M.~Jeanblanc-Picqu{\'e}, \emph{Compactification
  methods in the control of degenerate diffusions: existence of an optimal
  control}, Stochastics \textbf{20} (1987), no.~3, 169--219.

\bibitem{elkaroui-partialobservations}
\bysame, \emph{Existence of an optimal {M}arkovian filter for the control under
  partial observations}, SIAM journal on control and optimization \textbf{26}
  (1988), no.~5, 1025--1061.

\bibitem{karouitan-capacities}
N.~El Karoui and X.~Tan, \emph{Capacities, measurable selection and dynamic
  programming part {II}: {A}pplication in stochastic control problems}, arXiv
  preprint arXiv:1310.3364 (2013).

\bibitem{kurtz-yw2013}
T.G. Kurtz, \emph{Weak and strong solutions of general stochastic models},
  arXiv preprint arXiv:1305.6747 (2013).

\bibitem{kurtzprotter-weakconvergence}
T.G. Kurtz and P.~Protter, \emph{Weak limit theorems for stochastic integrals
  and stochastic differential equations}, The Annals of Probability (1991),
  1035--1070.

\bibitem{kurtzstockbridge-1998}
T.G. Kurtz and R.H. Stockbridge, \emph{Existence of {M}arkov controls and
  characterization of optimal {M}arkov controls}, SIAM Journal on Control and
  Optimization \textbf{36} (1998), no.~2, 609--653.

\bibitem{lacker-mfglimit}
D.~Lacker, \emph{A general characterization of the mean field limit for
  stochastic differential games}, Probability Theory and Related Fields
  \textbf{165}, 581--648.

\bibitem{lacker-mfgcontrolledmartingaleproblems}
\bysame, \emph{Mean field games via controlled martingale problems: existence
  of {M}arkovian equilibria}, Stochastic Processes and their Applications
  \textbf{125} (2015), no.~7, 2856--2894.

\bibitem{lasrylionsmfg}
J.M. Lasry and P.L. Lions, \emph{Mean field games}, Japanese Journal of
  Mathematics \textbf{2} (2007), 229--260.

\bibitem{laurierepironneau-dynamic}
M.~Lauriere and O.~Pironneau, \emph{Dynamic programming for mean-field type
  control}, Comptes Rendus Mathematique \textbf{352} (2014), no.~9, 707--713.

\bibitem{meleard1992martingale}
S.~M{\'e}l{\'e}ard, \emph{Martingale measure approximation, application to the
  control of diffusions}, Pr{\'e}publication du labo. de proba., univ. Paris VI
  (1992).

\bibitem{meleard1996asymptotic}
\bysame, \emph{Asymptotic behaviour of some interacting particle systems;
  {M}c{K}ean-{V}lasov and {B}oltzmann models}, Probabilistic models for
  nonlinear partial differential equations, Springer, 1996, pp.~42--95.

\bibitem{meleardroelly-moderate}
S.~M{\'e}l{\'e}ard and S.~Roelly-Coppoletta, \emph{A propagation of chaos
  result for a system of particles with moderate interaction}, Stochastic
  processes and their applications \textbf{26} (1987), 317--332.

\bibitem{oelschlagermkv}
K.~Oelschl\"{a}ger, \emph{A martingale approach to the law of large numbers for
  weakly interacting stochastic processes}, The Annals of Probability (1984),
  458--479.

\bibitem{oelschlager-moderate}
K.~Oelschl{\"a}ger, \emph{A law of large numbers for moderately interacting
  diffusion processes}, Zeitschrift f{\"u}r Wahrscheinlichkeitstheorie und
  verwandte Gebiete \textbf{69} (1985), no.~2, 279--322.

\bibitem{pham-MKVcontrol}
H.~Pham and X.~Wei, \emph{Bellman equation and viscosity solutions for
  mean-field stochastic control problem}.

\bibitem{pham-stochasticMKVcontrol}
\bysame, \emph{Dynamic programming for optimal control of stochastic
  {M}c{K}ean-{V}lasov dynamics}, arXiv preprint arXiv:1604.04057 (2016).

\bibitem{shkolnikov-largesystems}
M.~Shkolnikov, \emph{Large systems of diffusions interacting through their
  ranks}, Stochastic Processes and their Applications \textbf{122} (2012),
  no.~4, 1730--1747.

\bibitem{stroockvaradhanbook}
D.W. Stroock and S.R.S. Varadhan, \emph{Multidimensional diffusion processes},
  2 ed., Grundlehren Der Mathematischen Wissenschaften, Springer, 1979.

\bibitem{sznitman}
A.S. Sznitman, \emph{Topics in propagation of chaos}, Ecole d'et\'{e} de
  Probabilit\'{e}s de Saint-Flour XIX - 1989, Lecture Notes in Mathematics,
  vol. 1464, Springer Berlin / Heidelberg, 1991, pp.~165--251.

\bibitem{villanibook}
C.~Villani, \emph{Topics in optimal transportation}, Graduate Studies in
  Mathematics, American Mathematical Society, 2003.

\bibitem{walsh-introspde}
J.B. Walsh, \emph{An introduction to stochastic partial differential
  equations}, Springer, 1986.

\end{thebibliography}

\end{document}